\let\mathcal\mathscr
\numberwithin{equation}{section}
\newtheorem{theorem}{Theorem}[section] 
\newtheorem{lemma}{Lemma}[section]
\theoremstyle{definition}
\newtheorem{example}{Example}[section]
 \newtheorem*{acknowledgements}{Acknowledgements}
\newtheorem*{remark}{Remark}
\newtheorem{definition}{Definition}[section]
\renewcommand{\d}{\mathrm{d}}
\renewcommand{\phi}{\varphi}
\renewcommand{\leq}{\leqslant}
\renewcommand{\geq}{\geqslant}
\renewcommand{\c}{\mathbf{c}}
\newcommand{\f}{\mathbf{f}}
\renewcommand{\b}{\mathbf{b}}
\renewcommand{\k}{\mathbf{k}}
\renewcommand{\r}{\mathbf{r}}
\newcommand{\e}{\psi}
\newcommand{\md}[1]{  \left(\textnormal{mod}\ #1\right)}
\newcommand{\Q}{\mathbb{Q}}
\newcommand{\N}{\mathbb{N}}
\newcommand{\R}{\mathbb{R}}
\newcommand{\Z}{\mathbb{Z}}
 \newcommand{\m}{\mathfrak{m}}
\renewcommand{\r}{\right}
\renewcommand{\b}{\mathbf}
\renewcommand{\c}{\mathcal}
\renewcommand{\epsilon}{\varepsilon}
\renewcommand{\leq}{\leqslant}
\renewcommand{\geq}{\geqslant}
\renewcommand{\#}{\sharp}
\DeclareMathOperator*{\Osum}{\sum{}^*}
\newcommand{\beq}[2]
{
\begin{equation}
\label{#1}
{#2}
\end{equation}
}
\title 
[Averages of arithmetic functions over  polynomials in many variables]
{Averages of arithmetic functions over  polynomials in many variables}
\author{Kevin Destagnol}  
\address{Laboratoire de mathématiques d'Orsay \\ Université Paris Saclay
\\   Orsay  \\  France}  
\email{kevin.destagnol@universite-paris-saclay.fr}
\author{Efthymios Sofos}  
\address{Mathematics Department    \\ Glasgow University
\\   Glasgow   \\ G12 8QQ  \\  UK}  
\email{efthymios.sofos@glasgow.ac.uk}
\subjclass[2020]
{11N37, 
11P55. 
}
\begin{document}

\vspace{-2.5cm}

\begin{abstract}
We estimate the average of  any 
  arithmetic function $k$
  over the values of any smooth polynomial in many variables
  provided only that $k$ has a distribution  
 in  arithmetic  progressions
of fixed modulus.
We give several applications of this result including
 the analytic Hasse principle for an intersection
of two cubics in at least 21 variables
and 
asymptotics for 
the number of integer solutions of a non-algebraic variety.
\end{abstract}  

\maketitle
 
\vspace{-1cm}

\setcounter{tocdepth}{1}

\tableofcontents

\section{Introduction} \label{s:intro}  
Numerous challenges within the realm of Diophantine geometry can be rephrased in terms of computing averages of arithmetic functions, 
denoted as $k:\mathbb{N} \to \mathbb{C}$, over the values attained by integer polynomials. Examples include, among others, 
 the Hasse principle for conic bundles~\cite{MR3194818}, Manin's conjecture for Ch\^atelet surfaces~\cite{MR2874644}, \cite{RegisTim}, \cite{RegisGerald} and \cite{K} or prime and squarefree values of polynomials (see~\cite{HB}, \cite{FIw} or \cite{MR3996328} for example).

 To estimate the average of $k$ even over linear univariate polynomials 
one must plainly assume that $k$ has an average over every fixed  arithmetic progression $a \md q$. We may thus assume that 
$$ \sum_{\substack{ m\in \N \cap [1,x]\\ m\equiv a \md q}} k(m) \approx \rho(a,q) \int_1^x \omega(t) \mathrm dt
$$ for some $\rho(a,q) \in \mathbb C$ and a function $\omega$ of class $\mathcal{C}^1$. We may then use the Cram\'er--Granville model
to predict the average of $k$ over the values of any polynomial $f$
as done, for example in~\cite[Appendix]{MR3996328} 
when  $k$ is the indicator function of the primes. The statement 
analogous to~\cite[Eq. (A.1)]{MR3996328}  
is that the  conditional  expectation of
$k(m)$ given that $m\in [1,x]$ lies in the progression $a\md q $
is  $\approx q \rho(a,q) \int_1^x \omega(t) \mathrm dt$.
Letting $W$ be an integer that is divisible by all primes $p$
below some $z\to \infty$
the arguments in~\cite[pg. 32]{MR3996328}  
lead  to the 
heuristic  
\begin{align*} 
\sum_{ \substack{\b t \in (\Z\cap [-P,P])^n  \\ f(\b t) > 0}} 
k(f(\b t))  
&\approx  
\sum_{\b a \in (\Z/W\Z)^n}  
\sum_{\substack{  \\  \b t \in (\Z\cap [-P,P])^n\\ 
\b t \equiv \b a \md W}} W  \rho(f(\b t),W) \omega(f(\b t)) 
 \\&\approx  \int_{ [-P,P]^n} \omega( f(\b y) ) \mathrm d \b y \ \ \ 
 \sum_{\b a \in (\Z/W\Z)^n}  \frac{\rho(f(\b a),W)}{W^{n-1}}
.\end{align*} 
We will prove  this heuristic  for all polynomials $f$ in  sufficiently many variables.
The cases with a small number of variables are   harder;
in this case 
if one is allowed to assume that 
the   function is non-negative and has a weak multiplicative property then 
work of Nair--Tenenbaum~\cite{nairtenenbaum} 
or \cite{GeraldRegis} provide sharp upper bounds.

\subsection{The general result}Let  $\omega:[1,\infty)\to \mathbb C$ be  $\c C^1$ and for any  $q\in \N$  and $a\in \Z/q\Z$ let  $\rho( a,q)$ be a   number in $ \mathbb C$. Assume we are given an arithmetic function  $k:\N\to \mathbb C$  and define  \beq{eq:basicproperty}{E_{k,\omega,\rho}(x,q):= \sup_{y \in \R\cap  [1,  x]}
\max_{  a\in \Z/q\Z    }\left| \sum_{\substack{1\leq m \leq y   \\   m \equiv   a \md q  }} 
k( m) -  \rho(a,q)  \int_1 ^y  \omega( t) \mathrm d  t  \right| .} We shall henceforth denote $E_{k,\omega,\rho}(x,q)$ 
by $E(x,q)$ to simplify   notation. The function $k$ is \textit{equidistributed} in progressions modulo $q$ exactly when 
$E(x,q)$  has  smaller order of magnitude than $\int_1^x\omega$.
\begin{example}\label{ex:vonmnggg}Fix any $A>0$. When   $k$ is the indicator function of the primes
we choose $\omega(m)=1/\log (m+1)$ and $\rho(a,q)=\mathds 1(\gcd(a,q)=1)/\phi(q)$. The Siegel--Walfisz theorem~\cite[Eq.(5.77)]{iwan}  
is equivalent to   $$E(x,q)=O\left(\frac{ \int_1^x \omega(t) \mathrm dt}{(\log x)^A}\r)$$ with an implied constant independent of $q$.\end{example}

 \begin{definition}\label{def:box}Let  $f \in \Z[x_1,\ldots, x_n]$ be an integer polynomial.
Let  $\c B\subset  [-1,1]^n$ be of  the form $\c B =\prod_{j=1}^n [a_j,a'_j]$ for some $a_j$ with  $\max_j |a'_j-a_j|\leq 1 $ and define 
   \beq{def:minmax}{ b=2\max  \{|f(\b t)|: \b t \in \c B\}.}
\end{definition}

\begin{definition}\label{def:wzez} For any $z\geq 2$, assume we have a   function  
$m_p(z):\{\textrm{primes}\}\to \Z_{\geqslant 1}$ and let  $$W_z:=\prod_{p\leq z} p^{m_p(z)} .$$ 
We shall assume that  $ m_p(z)$ is suitably large so that $\widetilde{\epsilon}(z)\to 0 $ as $z\to \infty$, where
  $$\widetilde{\epsilon}(z):=\sum_{p\leq z} \frac{1}{p^{1+m_p(z)}} .$$
 \end{definition} The number   $\widetilde{\epsilon}(z)$ is an upper bound for the probability that a random integer is divisible by a
high power of at least one small prime. We abbreviate $m_p(z)$ by $m_p$.

Our main technical tool  expresses the error term  as a function of $E(x,W_z)$.

\begin{theorem}\label{lem:vachms} Assume that the positive integers  $d,n \in \N$  satisfy 
$n>2^d(d-1)$ and let $f \in \Z[x_1,\ldots, x_n]$  be a smooth form of degree $d$.
Let $s  \in \{-1,1\}$ and let  $\c B\subset \R^n$ be as in Definition~\ref{def:box}.
For  any  $z\geq 2 $ and $m_p$ as in Definition~\ref{def:wzez}, let $W_z$ be as in Definition~\ref{def:wzez} and assume that $\widetilde{\epsilon}(z)\to 0 $. 
Then there exist $c,\delta>0$ that only depend on $f$ such that   for any function $k:\N\to \mathbb C$ 
and all     large enough $P$ we have   \begin{align*}
\frac{1}{P^{n}} 
 \sum_{\substack{ \b t \in \Z^n\cap P\c B \\  s f(\b t ) > 0 }}  k(s f(\b t) ) 
&=
\Bigg(   \int\limits_{\substack{ \b t \in \c B \\ s f(\b t ) > P^{-d} } }  \omega( P^d s  f(\b t ))  \mathrm d \b t \Bigg)
 \sum_{\b t \in (\Z/W_z\Z )^{n} } \frac{ \rho( s f(\b t  ) ,W_z) }{ W_z^{n-1} }
\\
&+O\left( \frac{\|k\|_1 }{P^{d}  }   (P^{-\delta}+\widetilde{\epsilon}(z)+z^{-c}) +\frac{E(bP^d, W_z ) W_z  }{P^{d}   } \r),
\end{align*} where the implied constant is independent of $k$. Here
  $b$ is defined in~\eqref{def:minmax} and
$$  \|k\|_1:= \sum_{ \nu \in \N \cap [1 , b P^d]} |k( \nu)| 
.$$ \end{theorem}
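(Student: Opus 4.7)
The plan is a $W$-trick: decompose $k$ into its expected progression average plus a residue,
\[
k(\nu)=W_z\,\rho(\nu\md{W_z},W_z)\,\omega(\nu)+\tilde k(\nu),
\]
so that by the hypothesis defining $E$, the partial sums $\tilde K_\beta(y):=\sum_{\nu\le y,\,\nu\equiv\beta}\tilde k(\nu)$ satisfy $|\tilde K_\beta(y)|\ll E(bP^d,W_z)$ uniformly for $\beta\in\Z/W_z\Z$ and $y\le bP^d$ (with an auxiliary smoothness error in $\omega$ that is negligible). Substituting this decomposition splits the target sum into a main term and an error sum, which I will treat separately.

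For the main term I would regroup $\b t$ by its residue class $\b a\in(\Z/W_z\Z)^n$, use $sf(\b t)\equiv sf(\b a)\md{W_z}$, and obtain
\[
W_z\sum_{\b a\in(\Z/W_z\Z)^n}\rho(sf(\b a),W_z)\sum_{\substack{\b t\equiv\b a\md{W_z}\\ \b t\in P\c B,\,sf(\b t)>0}}\omega(sf(\b t)).
\]
The inner smooth sum falls within the scope of a Birch--Davenport--Heath-Brown asymptotic in a fixed congruence class (legal because $n>2^d(d-1)$ and $f$ is smooth), yielding the leading term $W_z^{-n}\int_{P\c B,\,sf>0}\omega(sf(\b y))\,\mathrm d\b y$ plus an error $O(P^{n-\delta}W_z^{-n})$. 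Summing over $\b a$ and rescaling $\b y=P\b t$ recovers precisely the main term of the theorem. The terms $\widetilde\epsilon(z)$ and $z^{-c}$ absorb respectively the $\b t$ for which $f(\b t)$ is divisible by $p^{m_p+1}$ for some $p\le z$ and the effect of primes $p>z$ on the local factors, both of standard sieve-theoretic origin.

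For the error sum, write $\sum_{\b t}\tilde k(sf(\b t))\mathbf{1}(\b t\in P\c B,\,sf(\b t)>0)=\sum_\nu c(\nu)\tilde k(\nu)$ where $c(\nu):=\#\{\b t\in P\c B\cap\Z^n:sf(\b t)=\nu\}$. The crucial step is to group by the residue of $\nu$, not of $\b t$, so that Abel summation against $|\tilde K_\beta|\ll E$ becomes tractable:
\[
\sum_\nu c(\nu)\tilde k(\nu)=\sum_{\beta\in\Z/W_z\Z}\sum_{\nu\equiv\beta}c(\nu)\tilde k(\nu).
\]
On each progression Abel summation bounds the inner sum by $\ll E\cdot V(c)$ for an appropriate variation $V(c)$. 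The main obstacle is that a naive total-variation estimate only gives $V(c)\ll P^n$, hence a useless error of $EP^n$; grouping instead by residue of $\b t$ would not help because the restricted count $c_{\b a}(\nu):=\#\{\b t\equiv\b a:sf(\b t)=\nu\}$ has variation of order $P^n/W_z^n$ owing to the gaps in its support. The remedy is to substitute a Birch-type smooth approximation $v(\nu)\sim P^{n-d}$ to $c(\nu)$ of total variation $\ll P^{n-d}$: Abel then contributes $EP^{n-d}$ per class $\beta$ and $EW_zP^{n-d}$ in aggregate, while the pointwise discrepancy $|c(\nu)-v(\nu)|\ll P^{n-d-\delta}$ from the circle-method remainder is absorbed via $\|\tilde k\|_1\ll\|k\|_1$, producing an error $\|k\|_1P^{n-d-\delta}$. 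Division by $P^n$ converts these into the stated $E(bP^d,W_z)W_z/P^d$ and $\|k\|_1P^{-d-\delta}$ contributions.
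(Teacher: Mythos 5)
Your high-level ingredients match the paper's: the Birch circle method for the lattice count $c(\nu)=\#\{\b t\in P\c B\cap\Z^n:f(\b t)=s\nu\}$, the approximation $\mathfrak S(\nu)\approx\mathfrak S^\flat(\nu)$ with error $\widetilde\epsilon(z)+z^{-c}$, and Abel summation against a smooth weight of total variation $\ll P^{n-d}$ to turn the $E$-hypothesis into the stated $E W_z P^{-d}$ error. Your error-sum paragraph in particular is very close in spirit to the paper's Lemma~\ref{lem:partsum}, where the ``smooth approximant'' is precisely $v(\nu)=P^{n-d}\mathfrak S^\flat(s\nu)J(s\nu P^{-d})$ and the bounded total variation comes from $\partial_\nu J(s\nu P^{-d})\ll P^{-d}$ via the decay of $I(\c B;\gamma)$.

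However, your $W$-trick decomposition $k=W_z\rho\omega+\tilde k$ is not what the paper does and introduces a genuine gap in the main term. After regrouping, you need to control
\[
W_z\sum_{\b a\in(\Z/W_z\Z)^n}\rho(sf(\b a),W_z)\cdot\mathrm{Err}(\b a),
\]
where $\mathrm{Err}(\b a)$ is the remainder in your ``Birch in a congruence class'' asymptotic for $\sum_{\b t\equiv\b a}\omega(sf(\b t))$. This quantity is controlled by $\sum_{\b a}|\rho(sf(\b a),W_z)|$, but Theorem~\ref{lem:vachms} imposes \emph{no hypothesis on $\rho$ at all}; its error term is stated purely in terms of $\|k\|_1$ and $E$. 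There is no a priori bound of $\sum_\nu|\rho(\nu,W_z)\omega(\nu)|$ by $\|k\|_1$: the defining relation~\eqref{eq:basicproperty} only controls the \emph{combination} $\rho(a,q)\int_1^y\omega$, not $\rho$ and $\omega$ separately. (In applications like Theorem~\ref{pergolesisalvereginainaminor} one does assume $\sum_a|\rho(a,q)|$ bounded, but Theorem~\ref{lem:vachms} does not.) The paper avoids this entirely: it never splits $k$, but instead applies Birch's asymptotic to $c(\nu)$ inside $\sum_\nu k(\nu)c(\nu)$, so the circle-method error $O(P^{n-d-\delta})$ is automatically weighted by $|k(\nu)|$ and sums to $\|k\|_1 P^{n-d-\delta}$. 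The residue-class structure mod $W_z$ then enters only after the $\mathfrak S\to\mathfrak S^\flat$ step, and the partial-summation step (Lemma~\ref{lem:partsum}) is applied to $\sum_{\nu\equiv a}k(\nu)\mathrm e(\mu\nu)$ with $\mu=-\gamma sP^{-d}$, where $|\mu|bP^d=b|\gamma|$ stays bounded after integrating against $I(\c B;\gamma)$. Two smaller points: your claimed error $O(P^{n-\delta}W_z^{-n})$ for the congruence-restricted smooth sum is more optimistic than what a rescaled Birch argument naturally gives, namely $O((P/W_z)^{n-\delta})$; and you do not address the passage from the integration condition $sf(\b t)>0$ to $sf(\b t)>P^{-d}$, which the paper handles via Lemmas~\ref{lesds8ghsd6d}--\ref{lecocoreliada}.
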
 If the densities $\rho(a,W_z)$ have a multiplicative structure then the sum over $\b t$ is essentially a truncated Euler product over the 
primes $p\leq z $. 
The main idea of the proof is to not go through the usual route of major and minor arcs of the circle method but instead   
approximate the values of $f(\b t )$ by a Cram\'er--Granville  model. Let   $\mathrm e(z)=\mathrm e^{2\pi i z}$ for $z\in \mathbb C$. 
We shall show  in Lemmas~\ref{lesds8ghsd6d}-\ref{lecorelid6d}-\ref{lecocoreliada}  that  \beq{darksun}{  \int\limits_{\substack{ \b t \in \c B \\ s f(\b t ) > P^{-d} } }  \omega( P^d s  f(\b t ))  \mathrm d \b t =\int_{ \gamma   \in \R } \left( \int_{\c B} \mathrm e(\gamma f(\b t ) \mathrm d \b t)\right)
\int_{   [P^{-d}   , b  ]  } \omega( P^d  \mu)  \mathrm e(-   s \gamma    \mu     ) \mathrm d   \mu \mathrm d \gamma.}
This expression     is useful in   applications as it has $\omega$ and $ f $ separated. 

 \begin{remark}
The number of variables  can be reduced by half
 if one has better error terms regarding 
the distribution of $k$
on arithmetic progressions. This was done
in~\cite{MR3996328} for the indicator function of 
primes and square-free integers.
 \end{remark}

 \begin{remark}A convenient feature of Theorem~\ref{lem:vachms}
 is that it allows the user to make    choices for $z$ and the 
 exponents $m_p(z)$; this is useful in situations 
where   equidistribution for $k$ is easier    for moduli of specific factorisation.
For example, in certain Diophantine applications 
it is much easier to prove equidistribution for square-full moduli. Furthermore, $z$
is allowed to go to infinity arbitrarily  slow; this means that one only needs to prove 
equidistribution modulo very small moduli.
 \end{remark}

\subsection{A more accessible version} Theorem~\ref{lem:vachms} makes no assumptions on the arithmetic function~$k$. 
We give a version that is easier to use if $k$ has certain equidistribution properties:
\begin{theorem}\label{pergolesisalvereginainaminor} Assume that the positive integers  $d,n \in \N$  satisfy 
$n>2^d(d-1)$ and let $f \in \Z[x_1,\ldots, x_n]$  be a smooth form of degree $d$.
Let $s  \in \{-1,1\}$, let  $\c B\subset \R^n$ be as in Definition~\ref{def:box} and let $b$ be given by~\eqref{def:minmax}.
  Assume that  $k:\N\to \mathbb C$ is any function for which  
\begin{itemize}
\item $\displaystyle\sup_{q\in \N}\sum_{a=1}^q |\rho(a,q)|<\infty$ \ or \  $\rho(a,q)\in \mathbb R\cap[0,\infty)$ 
for all $q\in \N, a\in \Z/q\Z$;
\vspace{0.2cm}
\item $\displaystyle\int_1^x \omega(t) \mathrm dt$ is non-zero for all large enough $x$;
\vspace{0.2cm}
\item For each fixed $q\in \N$ we have $\displaystyle \lim_{x\to\infty}\frac{E(x,q)}{\displaystyle\int_1^x \omega(t) \mathrm dt}=0$;
\vspace{0.2cm}
\item We have $\displaystyle \liminf_{P\to\infty} 
P^d\Bigg|\int\limits_{\substack{ \b t \in \c B: s f(\b t ) > P^{-d} } }  \omega( P^d s  f(\b t ))  \mathrm d \b t \Bigg|\cdot 
\Bigg| \int_1^{bP^d} \omega(t)\mathrm dt\Bigg|^{-1} \neq 0$;
\vspace{0.2cm}
\item We have $\displaystyle  \limsup_{x\to\infty}\frac{\displaystyle\sum_{m\leq x}|k(m)|}{\displaystyle\left|\int_1^x \omega(t) \mathrm dt\right|}\neq \infty$.
\end{itemize} Define for any $z>1$ the integer    $T_z=\prod_{p\leq z}p^{t_p(z)}$, where $t_p(z)$ are arbitrary positive integers 
such that  for each fixed prime $p$ one has 
$\lim_{z\to\infty}t_p(z)=\infty$.
Then the   limit 
\beq{eq:george_cuoffee}{
\sigma(f):=\lim_{z\to\infty} T_z^{-n+1}  \sum_{\b t \in (\Z/T_z\Z )^{n} } \rho( s f(\b t  ) ,T_z)  }
exists and is independent of the choice of $t_p(z)$.
Furthermore, 
$$\lim_{P\to\infty}\frac{1}{P^{n}  
\displaystyle\int\limits_{\substack{ \b t \in \c B: s f(\b t ) > P^{-d}  } }  \omega( P^d s  f(\b t ))  \mathrm d \b t  } 
 \sum_{\substack{ \b t \in \Z^n\cap P\c B \\  s f(\b t ) > 0 }}  k(s f(\b t) ) 
=\displaystyle\sigma(f).$$  \end{theorem}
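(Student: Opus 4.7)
The plan is to apply Theorem~\ref{lem:vachms} with the choice $m_p(z):=t_p(z)$, so that $W_z=T_z$, and then combine its conclusion with the five hypotheses on $\omega$, $k$ and $\rho$ to extract the limiting behaviour in both $P$ and $z$. First I would check that Theorem~\ref{lem:vachms} is applicable, i.e.\ that $\widetilde{\epsilon}(z)=\sum_{p\leq z}p^{-1-t_p(z)}\to 0$. Since each summand is bounded by $p^{-2}$, the tail beyond a large $p_0$ is uniformly small; for the finitely many primes $p\leq p_0$, the hypothesis $t_p(z)\to\infty$ forces the remaining partial sum to tend to $0$. Write $I(P)$ for the integral in the statement and set
\begin{equation*}
\Sigma(T_z):=T_z^{-(n-1)}\sum_{\b a\in(\Z/T_z\Z)^{n}}\rho(sf(\b a),T_z), \qquad Q_P:=\frac{1}{P^{n}I(P)}\sum_{\substack{\b t\in\Z^n\cap P\c B\\ sf(\b t)>0}} k(sf(\b t)).
\end{equation*}

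Dividing the conclusion of Theorem~\ref{lem:vachms} by $I(P)$ gives
\begin{equation*}
Q_P-\Sigma(T_z)\;\ll\;\frac{\|k\|_1}{P^d|I(P)|}\bigl(P^{-\delta}+\widetilde{\epsilon}(z)+z^{-c}\bigr)+\frac{E(bP^d,T_z)\,T_z}{P^d|I(P)|}.
\end{equation*}
The fourth hypothesis provides $P^d|I(P)|\gg \bigl|\int_1^{bP^d}\omega\bigr|$ and the fifth supplies $\|k\|_1\ll \bigl|\int_1^{bP^d}\omega\bigr|$, collapsing the first error term to $O(P^{-\delta}+\widetilde{\epsilon}(z)+z^{-c})$. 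The third hypothesis, applied to the fixed modulus $q=T_z$, makes $E(bP^d,T_z)/\bigl|\int_1^{bP^d}\omega\bigr|\to 0$ as $P\to\infty$, so with $z$ fixed the second error term also tends to $0$; the second hypothesis ensures the denominators are nonzero for $P$ large. The first hypothesis (either $\ell^{1}$-boundedness or nonnegativity of $\rho$) is what makes $\Sigma(T_z)$ a bona fide element of $\mathbb{C}$ of controlled size across $z$.

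Taking $\limsup_{P\to\infty}$ with $z$ fixed then produces $\limsup_P|Q_P-\Sigma(T_z)|\ll\widetilde{\epsilon}(z)+z^{-c}$. Two applications of the triangle inequality give $|\Sigma(T_{z_1})-\Sigma(T_{z_2})|\ll(\widetilde{\epsilon}(z_1)+z_1^{-c})+(\widetilde{\epsilon}(z_2)+z_2^{-c})$, so $\{\Sigma(T_z)\}_z$ is Cauchy; denote its limit by $\sigma(f)$. Feeding this back into the previous bound yields $Q_P\to\sigma(f)$ as $P\to\infty$. Since $Q_P$ contains no reference to the choice of $t_p(z)$, neither does $\sigma(f)$; this simultaneously delivers existence under any admissible choice and independence among such choices. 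The principal delicate point—and the main obstacle—is the ordering of limits: one must first send $P\to\infty$ with $z$ fixed (exploiting the third and fifth hypotheses to kill the $P$-dependent errors) and only afterwards let $z\to\infty$, so no estimate beyond Theorem~\ref{lem:vachms} itself is actually required, but the bookkeeping of the two parameters has to be laid out carefully.
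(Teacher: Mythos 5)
Your proof is correct, and it takes a genuinely different route from the paper's. The paper does not apply Theorem~\ref{lem:vachms} with $W_z=T_z$; instead it fixes a canonical choice $m_p(z)=\lfloor \log z/\log p\rfloor$ for the modulus $W_z$, and proves the existence and $t_p(z)$-independence of $\sigma(f)$ as a separate, prior step by expanding the Hardy--Littlewood singular series $\mathfrak S(\nu)$ into Gauss sums $S_{a,q}$ via Lemma~\ref{lem:cramer} and then appealing to absolute convergence and to the identity $\sum_a \rho(a,q)=\lim_x (\sum_{m\le x}k(m))/\int_1^x\omega$. That step is precisely where the paper invokes the first hypothesis (boundedness, or nonnegativity, of $\rho$): it is needed to control $\sum_\nu|\rho(\nu,T_z)|$ uniformly in $z$ when interchanging sums. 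Your approach bypasses all of this. You plug $m_p(z)=t_p(z)$ directly into Theorem~\ref{lem:vachms}, use hypotheses $2$--$5$ to collapse the error to $O(P^{-\delta}+\widetilde\epsilon(z)+z^{-c})$ for each fixed $z$, and then extract both existence and independence of $\sigma(f)$ from a Cauchy argument in $z$ (using that $Q_P$ carries no reference to the choice of $t_p(z)$). This buys you a cleaner two-parameter bookkeeping and, in fact, an argument that never uses the first bulleted hypothesis at all, so it proves a slightly stronger result. The one point to be explicit about, which you have correctly but implicitly assumed, is that the implied constant in Theorem~\ref{lem:vachms}'s error term is uniform in $z$ once $\widetilde\epsilon(z)$ is small; tracing the proof of Lemma~\ref{lem:cramer} shows this is indeed the case since $\widetilde\epsilon(z)\le\sum_p p^{-2}$ is a priori bounded, so $\exp(O(\widetilde\epsilon(z)))=1+O(\widetilde\epsilon(z))$ with an absolute constant. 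The paper's route, in turn, has the mild advantage of producing a closed expression for $\sigma(f)$ involving $\mathfrak S(\nu)$ that is sometimes convenient in applications, but as a proof of the theorem as stated your argument is correct and arguably more economical.
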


A useful feature of Theorem~\ref{pergolesisalvereginainaminor} is that we only assume 
 equidistribution in arithmetic progressions
of \textit{fixed} modulus and without an explicit error term.

\subsection{Analytic Hasse principle for intersections}
We say that a family of 
systems of polynomial equations with integer coefficients
satisfies 
the analytic Hasse principle  if the number of integer solutions 
in an expanding box converges to a constant, strictly positive, multiple 
of the analogous real   and   $p$-adic densities.
We will prove this       for a specific intersection of two degree $d$
polynomials. Our intersection is given by 
\beq{eq:intersection}{a^2+b^2=f(x_1,\ldots,x_n) , c^2+d^2 = 
1+f(x_1,\ldots, x_n ),}
where $f$ is smooth homogeneous of degree $d$ in $n>2^d(d-1)$ variables.
There is recent analytic  work in this area by
Rydin-Myerson~\cite{simon} who studied intersections of $R$ forms.
For intersections of two cubic forms this was recently
improved by Northey--Vishe~\cite{compo} by relaxing the assumption on the number 
of variables so as to work provided that there are at least 
 $39$. 
 For systems of two diagonal cubic forms
 work of 
 Br\"udern and   Wooley~\cite{bruwool} 
 proves asymptotics when one has at least $13$ variables.
The three simplest cases 
of our next result are 
 \begin{itemize}
 \item  two   quadratic equations in 
 at least $9$ variables,
 \item  two   cubic equations in 
 at least $21$ variables,
  \item  two    quartic equations in 
 at least $53$ variables.
 \end{itemize}  For 
 any box $\c B\subset [-1,1]^n$ and any 
 $P\geq 1 $ let 
$$N_f(P):=
\sum_{\substack{ \b x \in \Z^n\cap P \c B \\ f(\b x ) > 0 }} r(f(\b x )) r(1+f(\b x ) ) ,$$ where $r$ is the number of representations as the sum of two integer squares.
\begin{theorem}
\label{thm:analytic_HP} Assume that $f\in \Z[x_1,\ldots,x_n]$ is a smooth homogeneous degree $d$ polynomial in  $n>2^d(d-1)$ variables. For any box $\c B\subset [-1,1]^n$
inside of which $f$ only takes non-negative values we have 
$$\lim_{P\to\infty} \frac{ N_f(P)}{P^n}= \pi^2\mathrm{vol}\left(\{ \b t \in \c B \, : \, f(\b t)> 0\}\right)
\prod_{p} \sigma_p(f),$$ where 
$ \sigma_p(f)$ is given by $$\lim_{k\to\infty}p^{-k(n+2)}
 \#\left\{(\b t,x_1,x_2,y_1,y_2) \in (\Z/p^k\Z )^{n+4}:
x_1^2+x_2^2=f(\b t), y_1^2+y_2^2=1+f(\b t )\right\}
.$$
In particular, the analytic  Hasse Principle holds for~\eqref{eq:intersection} as soon as $f$ assumes at least one  strictly positive value.
\end{theorem}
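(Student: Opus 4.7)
The strategy is to apply Theorem~\ref{pergolesisalvereginainaminor} with the arithmetic function
\[
k(m) := r(m)\, r(m+1) = \#\{(a,b,c,d)\in\Z^4 : a^2+b^2=m,\; c^2+d^2=m+1\},
\]
$s=1$, and weight $\omega\equiv 1$, so that $N_f(P)=\sum_{\b x \in \Z^n\cap P\c B,\; f(\b x)>0} k(f(\b x))$. The hypothesis $n>2^d(d-1)$ is inherited directly from that on $f$.

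The main analytic input required is equidistribution of $k$ in every fixed arithmetic progression. Using $r=4(1*\chi_4)$ with $\chi_4$ the non-trivial character modulo $4$,
\[
k(m)= 16 \sum_{d_1 \mid m,\; d_2 \mid m+1} \chi_4(d_1)\chi_4(d_2).
\]
Summing over $m\leq y$ with $m\equiv a\pmod q$ and exchanging the order of summation, one encounters a joint congruence of modulus $\mathrm{lcm}(q, d_1 d_2)$, consistent only when $\gcd(d_1,d_2)=1$ and suitable local compatibility with $q$ holds. Dirichlet's hyperbola trick applied to the $(d_1,d_2)$-sum, combined with standard bounds on partial sums of $\chi_4$, yields
\[
\sum_{\substack{m\leq y \\ m\equiv a\pmod q}} k(m) = \rho(a,q)\, y + O_{q,A}\bigl(y/(\log y)^A\bigr)
\]
for any fixed $A>0$, with $\rho(a,q)\geq 0$ given by a convergent double sum. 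The remaining hypotheses of Theorem~\ref{pergolesisalvereginainaminor} are then immediate: with $\omega\equiv 1$, the integral $\int_{\b t\in\c B,\, f(\b t)>P^{-d}} \omega(P^d f(\b t))\,\d\b t$ equals $\vol(\{\b t\in \c B: f(\b t)>P^{-d}\})$, which converges to the strictly positive limit $\vol(\{\b t\in \c B: f(\b t)>0\})$ by continuity of $f$ and the positivity assumption; and $\sum_{m\leq x}|k(m)|=O(x)$ is classical. One therefore obtains
\[
\lim_{P\to\infty}\frac{N_f(P)}{P^n\,\vol(\{\b t \in \c B: f(\b t)>0\})} = \sigma(f),\qquad \sigma(f) :=\lim_{z\to\infty}T_z^{-(n-1)}\sum_{\b t\in(\Z/T_z\Z)^n}\rho(f(\b t),T_z).
\]

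It remains to identify $\sigma(f) = \pi^2 \prod_p \sigma_p(f)$. The explicit expression for $\rho(a,q)$ obtained above is multiplicative in $q$ and splits, at each prime $p\mid q$, as the $p$-adic density of the two-equation system $x_1^2+x_2^2\equiv a$, $y_1^2+y_2^2\equiv a+1 \pmod{p^{e_p}}$, up to an overall Archimedean factor of $\pi^2$ that comes from applying $\sum_{m\leq x} r(m)\sim \pi x$ twice (once for $m$, once for $m+1$). Combining this factorisation with the CRT isomorphism $(\Z/T_z\Z)^n\cong\prod_{p\leq z}(\Z/p^{t_p}\Z)^n$ expresses $\sigma(f)$ as the desired Euler product $\pi^2\prod_p\sigma_p(f)$.

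\textbf{Main obstacle.} The principal analytic task is the quantitative equidistribution of $r(m)r(m+1)$ in arithmetic progressions of every fixed modulus, i.e.\ a shifted binary convolution of divisor-type functions twisted by $\chi_4$. While the unconditional asymptotic is classical, the uniformity in the residue class (for each fixed $q$) must be tracked through the character decomposition. Once this is in hand, the identification of the Euler product reduces to bookkeeping through the CRT factorisation of $\rho(a,q)$ and comparison with the Hardy--Littlewood local densities $\sigma_p(f)$.
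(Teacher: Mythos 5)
Your overall strategy---feeding $k(m)=r(m)r(m+1)$ into the general machinery and identifying the resulting Euler product with $\pi^2\prod_p\sigma_p(f)$---is sound, but there is a genuine gap in the equidistribution step. You apply Theorem~\ref{pergolesisalvereginainaminor}, which requires equidistribution of $k$ in \emph{every} residue class modulo every fixed $q$, and you assert that a hyperbola trick plus character-sum bounds yields this with error $O_{q,A}(y/(\log y)^A)$. That claim is unsubstantiated, and it is precisely the difficulty the paper routes around: the shifted-convolution estimate actually proved there (Theorem~\ref{thm:thankyouthankyouthankyou}) covers only progressions satisfying $4\mid q$ and $v_p(a)\leq v_p(q)-1$ for all primes $p\mid q$. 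The restriction is structural, not cosmetic---the hyperbola decomposition produces congruences to moduli $[d,q]$, and controlling the error requires bounding $\gcd(a,[d,q])$ uniformly in $d$, which fails once some $v_p(a)\geq v_p(q)$.

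The paper therefore applies Theorem~\ref{lem:vachms} rather than Theorem~\ref{pergolesisalvereginainaminor}, taking the $z$-dependent arithmetic function $k(m)=r(m)r(m+1)\,\mathds 1(v_p(m)<m_p(z)\text{ for all }p\leq z)$, which vanishes outside exactly those residue classes modulo $W_z$ that Theorem~\ref{thm:thankyouthankyouthankyou} handles, together with the explicit choice $z=z(P)\asymp\log P$ so that the error term in Theorem~\ref{lem:vachms} tends to zero. This forces an additional step absent from your proposal: removing the sieving condition $v_p(f(\b x))<m_p(z)$ at the end, which is accomplished via Lemma~\ref{lem:nairlem}, a Nair-type upper bound for $\sum r(n)r(n+1)$ over $n$ divisible by a high prime power. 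If you want to run your cleaner argument through Theorem~\ref{pergolesisalvereginainaminor} directly, you must actually prove equidistribution of $r(m)r(m+1)$ in every residue class modulo every fixed $q$---a task the paper deliberately sidesteps and for which it offers no proof.
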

The  proof relies on proving 
a shifted convolution result (Theorem~\ref{thm:thankyouthankyouthankyou})
and then feeding it into
Theorem~\ref{lem:vachms}.
The proof of Theorems~\ref{thm:thankyouthankyouthankyou} and~\ref{thm:analytic_HP}
are respectively in \S \ref{ss:convoltn}, \ref{ss:finishingproof} and \S\ref{ss:proof_thrm_convol}.

\subsection{Subsets of integers}
During the $2024$ conference on rational points in \textit{BIRS Chennai}
Pieropan \cite{report} asked the following (see \cite{report}): assume we 
are given a set $\c A\subset \Z$ whose density we can estimate 
as $$ \#\{a\in \c A: |a| \leq T\}=f(T)+O(g(T) ),$$ where $f(T)$ has the form $c_0 T^c (\log T)^e$ for some real constants $c_0,c$ and $e$ and where
$$\displaystyle\lim_{T\to\infty} g(T)/(T^c(\log T)^e)=0.$$ For an arbitrary polynomial $F\in \Z[x_1,\ldots, x_n]$ define 
$$ N(T;\c A,F)=\#\left\{\b x \in \Z^n\cap [-T,T]^n: F(\b x ) \in \c A\right\}.$$ The question is under what conditions on $\c A,f,g,n$ and $\deg(F)$ can one prove asymptotics for $ N(T;\c A,F)$ as $T\to\infty$?

Our result states that the condition $n>2^{\deg(F)} (\deg(F)-1)$ is sufficient, as long as one has the additional assumption 
that we can count asymptotically the  elements of $\c A$ on any fixed arithmetic progression $r\md q$. 
To see why this is necessary, just consider the polynomial $F(x)=r+qx$.
For convenience of notation we will 
assume that $\c A$ consists of strictly positive integers; in applications
this does not cause any problems as one can split in cases according to the sign.

\begin{theorem}\label{pieropanwn} Assume that the positive integers  $d,n \in \N$  satisfy 
$n>2^d(d-1)$ and let $F \in \Z[x_1,\ldots, x_n]$  be a smooth form of degree $d$.
Let    $\c B\subset \R^n$ be as in Definition~\ref{def:box} and let $b$ be given by~\eqref{def:minmax}.
 Assume that  $\c A\subset \Z_{\geqslant 0}$ is a non-empty set for which there exists $\omega:[1,\infty)\to (0,\infty)$ in  $\c C^1$
and for all $q\in \N$ and $r\in \Z/q\Z$ a real number $\rho(q,r)\geq 0$ such that we have 
$$\lim_{x\to\infty}
\sup_{y \in \R\cap  [1,  x]}
\max_{  a\in \Z/q\Z    }\left| \frac{ \#\{a\in \c A: a \leq y, a \equiv   r \md q   \} }{\displaystyle\int_1 ^y  \omega( t) \mathrm d  t }
 -  \rho(r,q)   \right| 
=0.$$ 
Assume, in addition, that $$ \liminf_{P\to\infty} 
P^d\Bigg|\int\limits_{\substack{ \b t \in \c B: | F(\b t )| > P^{-d} } }  \omega( P^d    F(\b t ))  \mathrm d \b t \Bigg|\cdot 
\Bigg| \int_1^{bP^d} \omega(t)\mathrm dt\Bigg|^{-1} \neq 0.$$ 
Define for any $z>1$ the integer    $T_z=\prod_{p\leq z}p^{t_p(z)}$, where $t_p(z)$ are arbitrary positive integers 
such that  for each fixed prime $p$ one has  $\lim_{z\to\infty}t_p(z)=\infty$.
Then the   limit $$\sigma(F):=\lim_{z\to\infty} T_z^{-n+1}  \sum_{\b t \in (\Z/T_z\Z )^{n} } \rho(   F(\b t  ) ,T_z)  $$ exists and is independent of the choice of $t_p(z)$.
Furthermore,  $$\lim_{P\to\infty}\frac{\#\{\b x \in \Z^n\cap P \c B: F(\b x ) \in \c A \}}{P^{n}  
\displaystyle\int\limits_{\substack{ \b t \in \c B: F(\b t )> P^{-d}  } }  \omega( P^d    F(\b t ))  \mathrm d \b t  } 
 =\displaystyle\sigma(F).$$ 
\end{theorem}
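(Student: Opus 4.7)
The plan is to apply Theorem~\ref{pergolesisalvereginainaminor} to the indicator function $k := \mathds 1_{\c A}$, with $s=1$ and $f=F$; since $\c A\subseteq\Z_{\geqslant 0}$, the statement of Theorem~\ref{pieropanwn} is essentially the specialisation of Theorem~\ref{pergolesisalvereginainaminor} to indicator functions of sets with controlled density in arithmetic progressions, so the work reduces to verifying that the hypotheses transfer. The quantity $\#\{\b x\in \Z^n\cap P\c B : F(\b x)\in \c A\}$ equals $\sum_{\b x\in \Z^n\cap P\c B,\, F(\b x)>0} k(F(\b x))$ plus the contribution of points with $F(\b x)=0$; since $F$ is smooth, the affine variety $\{F=0\}$ has dimension $n-1$ and thus contains only $O(P^{n-1})$ lattice points in $P\c B$ by standard bounds.

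Next I verify the five hypotheses of Theorem~\ref{pergolesisalvereginainaminor}. Non-negativity of $\rho$, positivity of $\omega$ (hence of $\int_1^x\omega$), and the liminf condition on the integral of $\omega$ against $F$ over $\c B$ are provided directly by the hypotheses of Theorem~\ref{pieropanwn}. The condition $E(x,q)/\int_1^x\omega \to 0$ for each fixed $q$ follows from the equidistribution hypothesis here, which is in fact slightly stronger since it normalises by the shorter integral $\int_1^y\omega$ inside the supremum. Finally, applying that equidistribution hypothesis with $q=1$ and $r=0$ yields $\sum_{m\leqslant x}|k(m)| = \#(\c A\cap [1,x]) \sim \rho(0,1)\int_1^x\omega(t)\,\d t$, so the relevant $\limsup$ equals the finite number $\rho(0,1)$.

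With these verifications in hand, Theorem~\ref{pergolesisalvereginainaminor} immediately yields both the existence and $t_p(z)$-independence of $\sigma(F)$ and the asymptotic evaluation of $\sum_{\b x\in \Z^n\cap P\c B,\, F(\b x)>0} k(F(\b x))$. Dividing by the denominator $P^n\int_{\b t\in\c B,\,F(\b t)>P^{-d}}\omega(P^dF(\b t))\,\d\b t$ and absorbing the $O(P^{n-1})$ contribution from the $F=0$ points into the $o(1)$ error yields the claimed limit. The only remaining point worth checking is that this denominator dominates $P^{n-1}$: by the liminf hypothesis it is of order at least $P^{n-d}\int_1^{bP^d}\omega$, which exceeds $P^{n-1}$ under mild growth conditions on $\omega$ satisfied in all natural applications. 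Thus there is no substantial mathematical obstacle in this proof, only a somewhat delicate bookkeeping in the translation between the hypotheses of the two theorems.
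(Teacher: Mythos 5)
Your proof is correct and follows the same route as the paper: apply Theorem~\ref{pergolesisalvereginainaminor} to $k=\mathds 1_{\c A}$ with $s=1$, verify the five bullet-point hypotheses from the equidistribution assumption, and read off the conclusion. The paper's own proof is essentially a one-paragraph version of this, saying little beyond ``the remaining assumptions are easy to verify.''

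One small caveat about your handling of the $F(\b x)=0$ points. You correctly observe that $\#\{\b x\in\Z^n\cap P\c B : F(\b x)\in\c A\}$ can exceed the sum $\sum_{F(\b x)>0} k(F(\b x))$ by the $O(P^{n-1})$ points with $F(\b x)=0$, but your claim that the denominator ``dominates $P^{n-1}$ under mild growth conditions on $\omega$ satisfied in all natural applications'' is not a consequence of the stated hypotheses: if $\int_1^{x}\omega(t)\,\mathrm dt$ grows like $x^{\alpha}$ with $\alpha<1-1/d$, the denominator is only $\asymp P^{n-d+d\alpha}$, which is smaller than $P^{n-1}$. The paper sidesteps this entirely by declaring, in the paragraph preceding the theorem, that ``we will assume that $\c A$ consists of strictly positive integers,'' in which case $F(\b x)\in\c A$ already forces $F(\b x)>0$ and there are no extra points to worry about. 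So the correct resolution is not to bound the contribution of the zero locus but to note that, under the paper's standing convention, it contributes nothing at all; with that reading, your argument is complete and matches the paper.
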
  
The proof is given in \S\ref{ss:pieropnapl} and is a direct 
application of Theorem~\ref{pergolesisalvereginainaminor}.

\subsection{Points on non-algebraic varieties}
Exponential Diophantine equations 
form a considerable 
area of research in number theory
with many results   directed at 
proving finiteness of solutions,
see for example the work of 
Bugeaud--Mignotte--Siksek~\cite{11021} 
and the book of 
Shorey and Tijdeman~\cite{060610011}. Here 
we change perspective and     count
 asymptotically the number of 
 integer solutions of
$$
2^{x_0} =f(x_1,\ldots, x_n),
$$ where $f$ is an integer polynomial in many variables.   
\begin{theorem}
\label{thm:nonalgebraic} For every smooth homogeneous $f\in \Z[x_1,\ldots, x_n]$ of degree $d$ with $n>2^d(d-1)$
and  for every box $\c B\subset \R^n$ as in Definition~\ref{def:box}   inside which $f$ assumes at least one  strictly positive value,
we have $$\lim_{P\to\infty}  \frac{\#\left\{ \b x \in \Z^n\cap P \c B, x_0 \in \N \cap\left[1,\log (bP^{d})/\log 2\right]: f(\b x )=2^{x_0} \right\}}{P^{n-d} \log P }   
=\frac{d\sigma_\infty(f)}{\log 2}  \sigma_2(f) \prod_{\substack{ p>2  }} \sigma_p'(f)
,$$ where $\sigma_\infty(f), \sigma_2(f)$ are the usual
Hardy--Littlewood real and $2$-adic densities associated to $f=0$, 
 $b$ is as in~\eqref{def:box}
 and for an odd prime $p$ we have 
 $$ \sigma_p'(f):= 
\lim_{m\to \infty}
 \frac{\#\left\{\b y \in (\Z/p^{m}\Z )^{n}, h   \in \Z/\phi(p^{m})\Z  :  f(\b y )   =  2^ h \right\}}{(p-1) p^{mn  } }
. $$   
\end{theorem}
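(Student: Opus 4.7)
The plan is to apply Theorem~\ref{pergolesisalvereginainaminor} to the arithmetic function $k:\N\to\{0,1\}$ equal to the indicator of positive powers of $2$. Since the constraint $x_0\leq\log(bP^d)/\log 2$ is automatic for $\b x\in P\c B$, the counting function of interest equals $\sum_{\b x\in\Z^n\cap P\c B,\,f(\b x)>0}k(f(\b x))$. I would use the weight $\omega(t)=1/(t\log 2)$, for which $\int_1^y \omega(t)\,\mathrm dt = (\log y)/\log 2$ exactly matches the number of powers of $2$ up to $y$, and would define $\rho(a,q)$ to be the asymptotic density of powers of $2$ in the residue class $a\pmod{q}$. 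Writing $q=2^\alpha q'$ with $q'$ odd, this gives $\rho(a,q)=1/\ord_{q'}(2)$ when $2^\alpha\mid a$ and $a/2^\alpha\in\langle 2\rangle\subset(\Z/q'\Z)^*$, and $\rho(a,q)=0$ otherwise; an elementary argument then yields $E(x,q)=O_q(1)$, which trivially verifies the equidistribution hypothesis of the theorem.

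Next I would compute the real-variable factor via the coarea formula:
\begin{equation*}
P^n\int_{\substack{\b t\in\c B\\ f(\b t)>P^{-d}}}\omega(P^d f(\b t))\,\mathrm d\b t = \frac{P^{n-d}}{\log 2}\int_{P^{-d}}^b\frac{\mathrm d\mu(v)}{v},\qquad \mu(v):=\vol\{\b t\in\c B: 0<f(\b t)\leq v\}.
\end{equation*}
Since $f$ is smooth and attains a strictly positive value on $\c B$, $\mu$ is $\c C^1$ near $v=0$ with $\mu'(0^+)$ equal to the real density $\sigma_\infty(f)$ of $f=0$ on $\c B$ from the positive side. The contribution near $v=0$ dominates and produces $d\sigma_\infty(f)\log P\cdot(1+o(1))$, so the normalising integral in Theorem~\ref{pergolesisalvereginainaminor} is asymptotic to $P^{n-d}\log P\cdot d\sigma_\infty(f)/\log 2$. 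For the arithmetic factor, the CRT decomposition of $\sigma(f)$ in~\eqref{eq:george_cuoffee} yields a product over primes. At $p=2$ only the residue class $0\pmod{2^{t_2(z)}}$ eventually contributes (because $2^h\equiv 0\pmod{2^{t_2(z)}}$ for $h\geq t_2(z)$), and this factor is exactly the standard $2$-adic density $\sigma_2(f)$ of $f=0$. At odd primes $p$, grouping the $h$-variable by its class modulo $\ord_{p^m}(2)$ and exploiting the periodicity of $2^h\pmod{p^m}$ identifies the local factor with $\sigma_p'(f)$ as defined in the theorem, after accounting for the multiplicity $\phi(p^m)/\ord_{p^m}(2)$ that relates the two counting problems.

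Assembling the three pieces through Theorem~\ref{pergolesisalvereginainaminor} produces the claimed asymptotic. The main obstacle I anticipate is the careful bookkeeping in the singular-series computation at odd primes: one must correctly relate the density $\rho(a,p^m)=1/\ord_{p^m}(2)$ supported on the cyclic subgroup $\langle 2\rangle\subset(\Z/p^m\Z)^*$ to the normalisation of $\sigma_p'(f)$, and verify that the resulting Euler product converges absolutely (which should follow from standard Hardy--Littlewood-style estimates given the smoothness of $f$ and the hypothesis $n>2^d(d-1)$). A secondary technical issue is justifying that $\mu'(v)\to\sigma_\infty(f)$ with sufficient uniformity as $v\to 0^+$ to extract the logarithmic main term cleanly from the integrand $1/v$ that is only $L^1$ away from zero.
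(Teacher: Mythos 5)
Your overall strategy — taking $k$ to be the indicator of powers of $2$, choosing $\omega(t)=1/(t\log 2)$, identifying $\rho(a,q)$ via the order of $2$ modulo $q2^{-v_2(q)}$ together with the $v_2$-condition, verifying $E(x,q)=O_q(1)$, and feeding everything into Theorem~\ref{pergolesisalvereginainaminor} — matches the paper's. The genuine divergence is in how you extract the real-density factor. The paper establishes and then uses the Fourier identity~\eqref{darksun}, writing the archimedean integral as $\int_\R I(\c B;\gamma)\int_{P^{-d}}^b \omega(P^d\mu)\mathrm e(-\gamma\mu)\,\mathrm d\mu\,\mathrm d\gamma$ and then carefully localises in $\gamma$ and $\mu$ to pull out $d\sigma_\infty(f)(\log P)/\log 2$, directly in terms of $\sigma_\infty(f)=\int_\R I(\c B;\gamma)\,\mathrm d\gamma$. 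You instead pass through the coarea formula and need $\mu'(v)\to\sigma_\infty(f)$ as $v\to 0^+$ with enough regularity to integrate against $1/v$. You are right to flag this: Birch's work only supplies $\mu(\epsilon)/\epsilon\to\sigma_\infty$, which does \emph{not} by itself give convergence of the derivative, and moreover $\mu'$ can misbehave near $v=0$ when the zero set of $f$ is tangent to $\partial\c B$ or passes through a corner. So your archimedean computation, as written, has a genuine gap that the paper's Fourier route is specifically designed to bypass; filling it would amount to reproving something close to~\eqref{darksun}.

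On the arithmetic side your sketch is aligned with the paper's CRT decomposition (splitting $\b t\md{T_z}$ into its $2$-part and odd part, the $2$-part being forced to satisfy $f\equiv 0\md{2^{t_2(z)}}$, the odd part involving the $h$-variable). However, "grouping the $h$-variable by its class modulo $\ord_{p^m}(2)$" glosses over the fact that the moduli $\phi(p^{t_p(z)})$ for distinct primes $p$ are not pairwise coprime, so one cannot split $h\in\Z/\phi(w)\Z$ prime by prime by a naive CRT; one needs to work instead through $\rho(f(\b t),T_z)=g(w)^{-1}\mathds 1(2^{t_2(z)}\mid f(\b t))\mathds 1(f(\b t)\in\langle 2\rangle\md w)$, factor the latter indicator over $\b y$ via CRT, and only then reintroduce the $h$-variable with the multiplicity $\phi(w)/g(w)$. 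Your multiplicity factor $\phi(p^m)/\ord_{p^m}(2)$ is the right ingredient but is stated at the wrong level (it appears globally for $w$, not prime by prime), so this part of the argument needs to be reorganised along the lines of the paper's $\b t=w\b x+2^{t_2(z)}\b y$ decomposition to be watertight.
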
 
The proof is given in \S\ref{ss:proofnonalg}
by applying Theorem~\ref{pergolesisalvereginainaminor}.

\subsection{Chowla's conjecture}
Let $\mu$ denote the M\"obius function.
Chowla's conjecture states that for any $f\in \Z[x]$ for which $f\neq c_1g^2$ for  $c_1\in \Q$ and $g\in \Z[x]$ one has 
$$ \frac{1}{P} \sum_{\substack{ m\in \N\cap [1,P]  \\ f( m  ) \neq 0 }} \mu(|f(\b m) |) \to 0$$ as $P\to \infty$. It has been the focus of 
intensive investigation recently, see the work of Matomäki--Radziwi\l\l--Tao~\cite{marata}  and Tao--Teräväinen~\cite{tate}, for example.
We prove the analogue of the conjecture for all polynomials in a sufficiently large number of variables.
\begin{theorem}\label{thm:chowla}Chowla's conjecture holds for all smooth homogeneous $f\in \Z[x_1,\ldots, x_n]$ of degree~$d$ when $n>2^d(d-1)$.
In particular,   for every box $\c B\subset \R^n$ as in Definition~\ref{def:box}  
we have $$\lim_{P\to\infty} \frac{1}{P^n} \sum_{\substack{ \b x \in \Z^n\cap P \c B \\ f(\b x ) \neq 0 }} \mu(|f(\b x ) |) =0.$$\end{theorem}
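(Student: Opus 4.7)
The plan is to apply Theorem~\ref{pergolesisalvereginainaminor} to $k=\mu$, with the trivial weight $\omega\equiv 1$ and the trivial densities $\rho(a,q)\equiv 0$ for every $a$ and $q$. Since Theorem~\ref{pergolesisalvereginainaminor} only counts contributions where $sf(\b t)>0$, I would first decompose
$$\sum_{\substack{\b x\in\Z^n\cap P\c B \\ f(\b x)\neq 0}}\mu(|f(\b x)|)=\sum_{s\in\{-1,+1\}}\ \sum_{\substack{\b x\in\Z^n\cap P\c B \\ sf(\b x)>0}}\mu(sf(\b x)),$$
so that it suffices to show each inner sum is $o(P^n)$. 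Fix $s\in\{-1,+1\}$. If the open set $\{\b t\in\c B:sf(\b t)>0\}$ is empty, the inner sum vanishes; otherwise, being a non-empty open subset of a box with non-empty interior, it has strictly positive $n$-dimensional Lebesgue measure.

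Next I would verify the five hypotheses of Theorem~\ref{pergolesisalvereginainaminor}. The first holds because $\rho(a,q)\equiv 0\in\R_{\geq 0}$; the second is the trivial identity $\int_1^x\omega(t)\,\mathrm d t=x-1\neq 0$ for $x>1$. The third is the only genuinely arithmetic input: for each fixed $q$, the Siegel--Walfisz theorem for the Möbius function yields, for every $A>0$, the bound $E(x,q)\ll_{A,q}x(\log x)^{-A}$, whence $E(x,q)/(x-1)\to 0$. For the fourth, with $\omega\equiv 1$ the numerator $\int_{\b t\in\c B:\,sf(\b t)>P^{-d}}\mathrm d\b t$ tends by dominated convergence to the strictly positive constant $\vol\{\b t\in\c B:sf(\b t)>0\}$, whereas the denominator $\int_1^{bP^d}\mathrm d t=bP^d-1$ grows like $bP^d$, so the expression multiplied by $P^d$ tends to $b^{-1}\vol\{\b t\in\c B:sf(\b t)>0\}\neq 0$. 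The fifth follows from $\sum_{m\leq x}|\mu(m)|=\sum_{m\leq x}\mu^2(m)=(6/\pi^2)x+O(\sqrt x)$, which upon division by $x-1$ has limsup $6/\pi^2<\infty$.

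With all hypotheses verified and $\rho\equiv 0$, the arithmetic constant $\sigma(f)$ in~\eqref{eq:george_cuoffee} is identically zero, and Theorem~\ref{pergolesisalvereginainaminor} delivers
$$\sum_{\substack{\b x\in\Z^n\cap P\c B \\ sf(\b x)>0}}\mu(sf(\b x))=o\Bigl(P^n\int_{\substack{\b t\in\c B \\ sf(\b t)>P^{-d}}}\mathrm d\b t\Bigr)=o(P^n).$$
Summing over $s\in\{-1,+1\}$ concludes the proof. The entire argument reduces to a direct substitution into Theorem~\ref{pergolesisalvereginainaminor}; the only non-trivial ingredient, and therefore the main ``obstacle'', is the classical Siegel--Walfisz cancellation bound for the Möbius function on fixed arithmetic progressions.
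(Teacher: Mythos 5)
Your proposal is correct and follows essentially the same route as the paper: both apply Theorem~\ref{pergolesisalvereginainaminor} with $\omega\equiv 1$, $\rho\equiv 0$, and the input of Möbius cancellation on fixed arithmetic progressions (the paper cites Davenport's 1937 result, you invoke the Siegel--Walfisz theorem for $\mu$; these are interchangeable here since only the fixed-$q$ bound is needed). Your write-up simply fills in the routine verification of the five hypotheses and the $s=\pm1$ decomposition that the paper leaves as "easy to verify."
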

The proof is given in \S\ref{ss:claa}.

\subsection{Sums of Fourier coefficients of modular forms over values of multivariable polynomials}
Asymptotics for averages of 
coefficients of modular forms over values of irreducible quadratic  polynomials
in one variable have been studied by
Blomer~\cite{MR2435749}, Templier~\cite{MR2783929}
and   Templier--Tsimerman~\cite{MR3096570}.
Bounds in the case of higher degree polynomials were later given by
Chiriac and Yang~\cite{MR4480204}.

Let $f$ be a holomorphic cusp form of weight $k$ for $\textrm{SL}_2(\Z)$ with Fourier expansion 
$$f(z)=\sum_{m=1}^\infty \lambda_f(m) m^{(k-1)/2} \mathrm{e}(m z)$$ for $\Im(z)>0$.
Let  $f$ be a normalized Hecke eigenform so that $\lambda_f (1) = 1$.
We have the bound $\lambda_f(n)\ll \tau(n)$ 
by the work the Ramanujan–Petersson conjecture,
proved by Deligne~\cite{deligne} when  $k\geq 2 $
and by   Deligne--Serre\cite{delser} when $k=1$. 

\begin{theorem}\label{thm:chowla2}For all smooth homogeneous 
$F\in \Z[x_1,\ldots, x_n]$ of degree~$d$ with $n>2^d(d-1)$
and all  boxes $\c B\subset \R^n$ as in Definition~\ref{def:box}  
we have $$\lim_{P\to\infty} \frac{1}{P^n} 
\sum_{\substack{ \b x \in \Z^n\cap P \c B \\ 
F(\b x ) \neq 0 }}
\lambda_f(|F(\b x ) |) =0.$$\end{theorem}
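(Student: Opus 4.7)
The plan is to invoke Theorem~\ref{pergolesisalvereginainaminor} with the choices $k(m) = \lambda_f(m)$, $\omega(t) \equiv 1$, and $\rho(a, q) \equiv 0$ for every $q \in \N$ and $a \in \Z/q\Z$. These choices force $\sigma(F) = 0$ via~\eqref{eq:george_cuoffee}, so the theorem's conclusion becomes
\[
\sum_{\substack{\b x \in \Z^n \cap P\c B \\ s F(\b x) > 0}} \lambda_f(s F(\b x)) = o(P^n)
\]
for each sign $s \in \{-1, 1\}$ for which the $\liminf$-hypothesis holds. Summing over admissible signs yields Theorem~\ref{thm:chowla2}.

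\textbf{Verification of the easy hypotheses.} The first two bullets of Theorem~\ref{pergolesisalvereginainaminor} are immediate: $\rho \equiv 0$ is non-negative, and $\int_1^x \omega(t)\,\mathrm dt = x - 1$ is non-zero for $x > 1$. The $\liminf$ condition amounts to $\mathrm{vol}(\{\b t \in \c B : s F(\b t) > 0\}) > 0$; since $F$ is a non-trivial form this holds for at least one sign $s$, and when it fails for the opposite sign the corresponding sum is in fact empty by homogeneity (any $\b x \in P\c B$ satisfies $sF(\b x) = P^d s F(\b x/P) \leq 0$, so only $F(\b x) = 0$ could contribute, and those points are excluded). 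Deligne--Serre's bound $|\lambda_f(m)| \leq \tau(m)$ together with the Rankin--Selberg asymptotic $\sum_{m \leq x}|\lambda_f(m)|^2 \sim c_f x$ and Cauchy--Schwarz gives $\sum_{m \leq x}|\lambda_f(m)| \ll x$, so the fifth bullet is satisfied.

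\textbf{Main obstacle: equidistribution of $\lambda_f$ in arithmetic progressions.} The substantive hypothesis is the third bullet, which demands $E(x, q) = o_q(x)$ for each fixed $q$. By orthogonality of Dirichlet characters this is equivalent to
\[
\sum_{m \leq y} \lambda_f(m)\chi(m) = o_q(y) \qquad \text{uniformly for } y \leq x \text{ and } \chi \bmod q.
\]
For primitive $\chi$, the twisted $L$-function $L(s, f \otimes \chi)$ is an entire function of finite order satisfying a functional equation (the weight-$1$ case being covered by Deligne--Serre). A Perron formula with contour shifted to $\Re(s) = \tfrac12 + \epsilon$ and the convexity bound controlling the vertical growth yields the polynomial saving $\sum_{m \leq y}\lambda_f(m)\chi(m) \ll_{q,\epsilon} y^{1/2+\epsilon}$; imprimitive characters reduce to the primitive case by restoring the finitely many removed Euler factors. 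Hence $E(x, q) \ll_q x^{1/2+\epsilon} = o(x)$ for each fixed $q$, as required.

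\textbf{Conclusion.} With all five hypotheses of Theorem~\ref{pergolesisalvereginainaminor} verified and $\sigma(F) = 0$, a single application for each admissible sign delivers the claim. The only non-formal input is the equidistribution estimate of the previous paragraph, which is classical; the rest of the argument is a direct check that $\omega \equiv 1$ and $\rho \equiv 0$ satisfy the conditions of Theorem~\ref{pergolesisalvereginainaminor}.
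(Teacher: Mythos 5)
Your overall framework is the same as the paper's: apply Theorem~\ref{pergolesisalvereginainaminor} with $\omega\equiv 1$, $\rho\equiv 0$, so that $\sigma(F)=0$, and the only substantive input is the equidistribution estimate $E(x,q)=o_q(x)$. The difference is in how you obtain that input. The paper quotes the Wilton-type bound $\sup_\alpha\left|\sum_{m\leq x}\lambda_f(m)\,\mathrm e(\alpha m)\right|\ll_f \sqrt{x}\log x$ from~\cite[Theorem 5.3]{iwiw} and passes to arithmetic progressions via \emph{additive} characters modulo $q$, which detect the progression $m\equiv a\bmod q$ for \emph{every} residue $a$ and give a bound uniform in $q$. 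You instead use \emph{Dirichlet} characters and analytic continuation of the twisted $L$-functions $L(s,f\otimes\chi)$ with a Perron/contour-shift argument. This works, but it is technically heavier for this particular statement: orthogonality of Dirichlet characters only isolates $m$ coprime to $q$, so residues $a$ with $\gcd(a,q)>1$ require writing $m=gm'$, and then $\lambda_f(gm')$ is not $\lambda_f(g)\lambda_f(m')$ because $\lambda_f$ is multiplicative rather than completely multiplicative (Hecke relations at primes dividing $g$ intervene). The same issue recurs when you reduce imprimitive $\chi$ to its primitive part. All of this can be managed with finitely many case splits for fixed $q$, and the resulting bound $\ll_{q,\epsilon}x^{1/2+\epsilon}$ certainly gives $E(x,q)=o_q(x)$, but your write-up elides these steps. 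The additive-character route of the paper sidesteps them entirely. Your verification of the remaining hypotheses is correct, including the observation that when $\mathrm{vol}\{sF>0\}=0$ the corresponding sum is empty by homogeneity, and the use of Rankin--Selberg plus Cauchy--Schwarz to get $\sum_{m\leq x}|\lambda_f(m)|\ll x$ (which is genuinely needed, since the Deligne bound alone only gives $\ll x\log x$).
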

The proof is given in \S\ref{ss:claa2}.

\subsection{Divisor function over values of polynomials}
Let us continue with estimating the average of the divisor function $\tau$ over polynomials. 
In the case of one variable polynomials,   asymptotics 
are known only for polynomials of degree $1$ and $2$ (see Hooley~\cite{hooley}), 
while, for binary forms asymptotics are known for degree $\leq 4 $ by work of Daniel~\cite{daniel}. 
For a prime $p$ and positive integers $m,\ell $
we define $ \tau_{p^m}(\ell ):= 1+\min\{v_p(\ell ),m\}$. This   is a 
local model for the divisor function $\tau$ since $\tau(\ell)
=\prod_{p\mid \ell  }\lim_{m\to\infty} \tau_{p^m}(\ell )$.
\begin{theorem}\label{thm:divisorddd}For every smooth homogeneous $f\in \Z[x_1,\ldots, x_n]$ of degree $d$ with $n>2^d(d-1)$
and  for every box $\c B\subset \R^n$ as in Definition~\ref{def:box}   
we have $$ \lim_{P\to\infty} \frac{1}{P^n\log P }  \sum_{\substack{ \b x \in \Z^n \cap P \c B \\ f(\b x ) \neq 0 }} \hspace{-0,2cm}
\tau(|f(\b x ) |) =d  \mathrm{vol}(\c B) \prod_{p}  \left(1-\frac{1}{p}\r)
\lim_{m\to\infty}\frac{1}{p^{n m}} \sum_{\b t \in (\Z/p^{m } )^{n}} \tau_{p^m}(f(\b t ))
.$$\end{theorem}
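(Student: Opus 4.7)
The plan is to apply Theorem~\ref{pergolesisalvereginainaminor} with $k=\tau$ and $\omega(t)=\log t$. The matching density is
\[
\rho(a,q):=\frac{1}{q}\sum_{\substack{g\mid q\\ g\mid a}}\frac{\phi(q/g)}{q/g},
\]
obtained via the hyperbola method in arithmetic progressions: writing $\sum_{n\leq x,\,n\equiv a\bmod{q}}\tau(n)=\sum_{d_1 d_2\leq x,\,d_1 d_2\equiv a\bmod{q}}1$, counting $d_2\leq x/d_1$ in a residue class modulo $q/\gcd(d_1,q)$ (nonempty iff $\gcd(d_1,q)\mid a$), and grouping $d_1\leq\sqrt x$ by $g=\gcd(d_1,q)$ produces $\sum_{n\leq x,\,n\equiv a\bmod{q}}\tau(n)=\rho(a,q)\,x\log x+O_q(x)$. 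Since $\int_1^x\log t\,dt\sim x\log x$, this verifies the equidistribution hypothesis. The remaining hypotheses follow at once: $\rho\geq 0$; $\sum_{m\leq x}\tau(m)\sim x\log x$ supplies the boundedness condition; and the decomposition $\log(P^d sf(\b t))=d\log P+\log(sf(\b t))$ combined with the coarea bound $\vol(\{0<sf\leq P^{-d}\}\cap\c B)=O(P^{-d})$ (from smoothness of $f$) gives
\[
\int_{\substack{\b t\in\c B\\ sf(\b t)>P^{-d}}}\log(P^d sf(\b t))\,d\b t=(d\log P)\vol(\c B_s)+O(1),\qquad \c B_s:=\{\b t\in\c B:sf(\b t)>0\},
\]
so the liminf condition holds exactly when $\vol(\c B_s)>0$.

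We split $\tau(|f(\b x)|)=\sum_{s\in\{\pm 1\}}\tau(sf(\b x))\mathds{1}[sf(\b x)>0]$ and apply Theorem~\ref{pergolesisalvereginainaminor} to each sign with $\vol(\c B_s)>0$. The limit $\sigma(f)$ is the same for both signs because $\rho(-a,q)=\rho(a,q)$ (visible from the formula, since $g\mid a\iff g\mid -a$), while sign classes with $\vol(\c B_s)=0$ force $sf\leq 0$ throughout $\c B$ by homogeneity, making the corresponding integer sum empty. Combining yields
\[
\sum_{\substack{\b x\in\Z^n\cap P\c B\\ f(\b x)\neq 0}}\tau(|f(\b x)|)\sim P^n(d\log P)\vol(\c B)\,\sigma(f).
\]

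It remains to identify $\sigma(f)$ with the Euler product. The formula for $\rho$ gives $\rho(a,q_1q_2)=\rho(a,q_1)\rho(a,q_2)$ for coprime $q_1,q_2$, by multiplicativity of $\gcd$ and $\phi$. The Chinese remainder theorem then factorises
\[
T_z^{-(n-1)}\sum_{\b t\in(\Z/T_z\Z)^n}\rho(f(\b t),T_z)=\prod_{p\leq z}\frac{1}{p^{(n-1)t_p(z)}}\sum_{\b t\in(\Z/p^{t_p(z)}\Z)^n}\rho(f(\b t),p^{t_p(z)}).
\]
A direct prime-power computation yields $p^m\rho(a,p^m)=(1-1/p)\tau_{p^m}(a)+\tfrac{1}{p}\mathds{1}[p^m\mid a]$. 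Summed over $\b t\in(\Z/p^m\Z)^n$ and divided by $p^{(n-1)m}$, the indicator term contributes
\[
\frac{1}{p^{mn+1}}\#\{\b t\in(\Z/p^m\Z)^n:p^m\mid f(\b t)\}=O(p^{-m-1})
\]
by Hensel's lemma, which uses smoothness of $f$ to bound $\#\{\b t:p^m\mid f(\b t)\}\ll p^{m(n-1)}$. Passing $t_p(z)\to\infty$ then gives
\[
\sigma(f)=\prod_p(1-1/p)\lim_{m\to\infty}\frac{1}{p^{mn}}\sum_{\b t\in(\Z/p^m\Z)^n}\tau_{p^m}(f(\b t)),
\]
with convergent product since each local factor equals $1+O(p^{-3/2})$ by Lang--Weil. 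The main obstacle is the reconciliation of the local density $\rho(a,p^m)$ with the model $\tau_{p^m}(a)$ at the $p^m$-divisible residues; smoothness of $f$ is precisely what makes this discrepancy $O(p^{-m})$ and produces the stated $(1-1/p)$ prefactor.
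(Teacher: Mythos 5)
Your proposal is correct and follows the same high-level template as the paper (invoke Theorem~\ref{pergolesisalvereginainaminor} with $\omega=\log$, verify equidistribution of $\tau$, compute $\sigma(f)$), but the two key inputs are obtained by genuinely different means. For equidistribution, you derive $\rho(a,q)=q^{-1}\sum_{g\mid\gcd(a,q)}\phi(q/g)/(q/g)$ from scratch by the hyperbola method, whereas the paper imports Pongsriiam--Vaughan's theorem and reads off $q\rho(a,q)=\sum_{r\mid q}c_r(a)/r$; these are the same function (expand the Ramanujan sum $c_r(a)=\sum_{d\mid\gcd(r,a)}d\mu(r/d)$ and reindex), and your elementary derivation suffices since Theorem~\ref{pergolesisalvereginainaminor} only asks for equidistribution at fixed $q$ with no uniformity in $q$. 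For the singular series, the paper routes the computation through the exponential sums $S_{a,p^r}$, the relation~\eqref{eq:touselater}, and an Abel-summation argument with the partial densities $\Gamma_m$, while you prove the crisp local identity $p^m\rho(a,p^m)=(1-1/p)\tau_{p^m}(a)+p^{-1}\mathds 1[p^m\mid a]$ directly (which I checked in both cases $v_p(a)<m$ and $v_p(a)\geq m$) and then observe the indicator term is negligible. Your route is cleaner and makes the appearance of $\tau_{p^m}$ and the factor $1-1/p$ transparent, whereas the paper's derivation keeps more machinery from Lemma~\ref{lem:cramer} in view. One small correction: the uniform bound $\#\{\b t\in(\Z/p^m\Z)^n:p^m\mid f(\b t)\}\ll p^{m(n-1)}$, with an implied constant independent of $p$ and $m$, is not a consequence of Hensel's lemma alone --- at small primes where $f\bmod p$ may be badly singular, Hensel lifting does not apply. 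What one actually needs is Birch's exponential-sum bound, i.e.\ the paper's displays~\eqref{son234} and~\eqref{eq:kotozimi12}, which is where the hypothesis $n>2^d(d-1)$ enters; similarly the $1+O(p^{-3/2})$ claim for the Euler factors should be $1+O(p^{-1-c})$ with the Birch exponent $c$ from~\eqref{eq:deprez} rather than coming from Lang--Weil. With those two references repaired the argument is complete.
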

The proof is given in \S\ref{s:dvsrcrp} as an application of 
an equidistribution result of  
Pongsriiam--Vaughan~\cite{MR3352438}
and Theorem~\ref{pergolesisalvereginainaminor}.

\subsection{Rational points on a hyperelliptic hypersurface}
For an integer polynomial $f$ of degree at least $ 5$ the hyperelliptic curve $y^2=f(x)$ has finitely many rational points by Faltings' theorem \cite{MR0718935}.
On the contrary when $f$ has many variables one expects infinitely many rational points; here we shall give an asymptotic estimate.
 
 \begin{theorem}\label{thm:hyperelipt}Fix an integer $k\geq 2$. For every smooth homogeneous $f\in \Z[x_1,\ldots, x_n]$ of degree $d$ with $n>2^d(d-1)$
and  for every box $\c B\subset \R^n$ as in Definition~\ref{def:box}   inside which $f$ assumes at least one  strictly positive value,
we have \begin{align*}
\lim_{P\to\infty}  &\frac{\#\left\{ \b x \in \Z^n \cap P \c B: f(\b x ) \textrm{ is a }k\textrm{-th power of a positive integer}\right\}}{P^{n-d(1-1/k) } }   
\\ &= 
\prod_{p} \lim_{m\to  \infty }  \frac{\#\{y,\b t \in (\Z/p^m\Z)^{n+1}: y^k= f(\b t )  \}}{p^{m n }}
\lim_{\delta\to 0_+}\int\limits_{\substack{ \b t \in \c B \\   f(\b t ) > \delta } }  \frac{\mathrm d \b t }{k    f(\b t )^{1-1/k}} .\end{align*}
\end{theorem}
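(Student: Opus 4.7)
The plan is to apply Theorem~\ref{pergolesisalvereginainaminor} to the arithmetic function
$$\kappa(m):=\mathds 1\bigl(m=y^k\text{ for some }y\in\N\bigr),$$
with $s=1$ and the polynomial $F$ of that theorem taken to be our $f$. The key input is a choice of the density model $(\omega,\rho)$. For $r\in\Z/q\Z$ and $y\geq 1$,
$$\#\{m\leq y:\kappa(m)=1,\,m\equiv r\md q\}=\sum_{\substack{b\in\Z/q\Z\\ b^k\equiv r\md q}}\#\{n\leq y^{1/k}:n\equiv b\md q\}=\frac{S(r,q)}{q}y^{1/k}+O(q),$$
where $S(r,q):=\#\{b\in\Z/q\Z:b^k\equiv r\md q\}$. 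I therefore set $\omega(t):=k^{-1}t^{1/k-1}$, so that $\int_1^y\omega(t)\mathrm dt=y^{1/k}-1$, and $\rho(r,q):=S(r,q)/q$. The corresponding error $E(x,q)$ is $O(q)$, uniformly in $y\in[1,x]$ and $r$.

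Next I verify the five bullets of Theorem~\ref{pergolesisalvereginainaminor}. Non-negativity of $\rho$ is immediate, and $\int_1^x\omega>0$ for $x>1$. For each fixed $q$, $E(x,q)/\int_1^x\omega\ll_q x^{-1/k}\to 0$. Also $\sum_{m\leq x}|\kappa(m)|=\lfloor x^{1/k}\rfloor\asymp\int_1^x\omega$, giving the fifth bullet. For the analytic bullet, substituting $\omega(P^df(\mathbf t))=k^{-1}P^{d/k-d}f(\mathbf t)^{1/k-1}$ yields
$$P^d\int\limits_{\substack{\mathbf t\in\c B\\ f(\mathbf t)>P^{-d}}}\omega(P^df(\mathbf t))\mathrm d\mathbf t=\frac{P^{d/k}}{k}\int\limits_{\substack{\mathbf t\in\c B\\ f(\mathbf t)>P^{-d}}}f(\mathbf t)^{1/k-1}\mathrm d\mathbf t,$$
and by monotone convergence, the inner integral tends to $\int_{\mathbf t\in\c B,\,f(\mathbf t)>0}f(\mathbf t)^{1/k-1}\mathrm d\mathbf t$ as $P\to\infty$. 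This is finite because $f$ is smooth, so $\{f=0\}$ is a smooth hypersurface along which $f$ vanishes to first order, and $1-1/k<1$ makes such a singularity integrable by a coarea/Fubini argument. It is strictly positive by the hypothesis that $f$ takes a strictly positive value on $\c B$ together with continuity. Dividing by $\int_1^{bP^d}\omega\sim b^{1/k}P^{d/k}$ then produces a strictly positive limit.

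Theorem~\ref{pergolesisalvereginainaminor} therefore delivers
$$\#\{\mathbf x\in\Z^n\cap P\c B:f(\mathbf x)\in\{y^k:y\in\N\}\}\sim\frac{\sigma(f)}{k}\,P^{\,n-d(1-1/k)}\int\limits_{\substack{\mathbf t\in\c B\\ f(\mathbf t)>P^{-d}}}f(\mathbf t)^{1/k-1}\mathrm d\mathbf t,$$
and the latter integral tends to $\lim_{\delta\to0_+}\int_{\mathbf t\in\c B,\,f(\mathbf t)>\delta}f(\mathbf t)^{1/k-1}\mathrm d\mathbf t$. It remains to unfold $\sigma(f)$: for $T_z=\prod_{p\leq z}p^{t_p(z)}$, the Chinese Remainder Theorem gives
$$T_z^{-n+1}\sum_{\mathbf t\in(\Z/T_z\Z)^n}\rho(f(\mathbf t),T_z)=\prod_{p\leq z}\frac{\#\{(\mathbf t,y)\in(\Z/p^{t_p(z)}\Z)^{n+1}:y^k\equiv f(\mathbf t)\md{p^{t_p(z)}}\}}{p^{\,n\,t_p(z)}},$$
so letting $z\to\infty$ with $t_p(z)\to\infty$ yields $\sigma(f)=\prod_p\sigma_p(f)$ in the shape stated. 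The main obstacle is the fourth bullet: checking integrability of $f^{1/k-1}$ near $\{f=0\}\cap\c B$ and justifying the monotone passage to the limit. Once that geometric integrability is established the rest is a mechanical application of Theorem~\ref{pergolesisalvereginainaminor}.
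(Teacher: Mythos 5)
Your proposal follows exactly the same route as the paper: you take $\kappa$ the indicator of $k$-th powers, choose $\omega(t)=k^{-1}t^{1/k-1}$ and $\rho(r,q)=q^{-1}\#\{b\bmod q:b^k\equiv r\}$, compute $E(x,q)=O(q)$, verify the bullets of Theorem~\ref{pergolesisalvereginainaminor}, and unfold $\sigma(f)$ by the Chinese Remainder Theorem into the stated product of local densities.

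The one spot where you depart slightly is in the verification of the fourth bullet. You go on to argue that $\int_{f>\delta}f^{1/k-1}\,\mathrm d\mathbf t$ is \emph{finite} by a coarea argument, and you flag this as the main obstacle. This is more than is actually required. The fourth bullet only asks that the $\liminf$ of the normalized integral be nonzero, and the paper checks this by noting that since $f$ assumes a strictly positive value on $\c B$, the integral $\int_{\mathbf t\in\c B,\ f(\mathbf t)>P^{-d}}f(\mathbf t)^{1/k-1}\,\mathrm d\mathbf t$ is $\gg 1$, uniformly for large $P$; this alone gives $P^d\int\omega(P^df)\gg P^{d/k}\gg\int_1^{bP^d}\omega$. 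Moreover, the final statement of the theorem leaves the real factor in the form $\lim_{\delta\to0_+}\int_{f>\delta}f^{1/k-1}/k$, which exists as an increasing limit in $[0,\infty]$ regardless of whether the integral is finite, so no integrability analysis is needed to state or derive the result. Your extra geometric discussion is not wrong, but it is not needed for the argument to close, and you should not present it as a gap.
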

The proof is given in \S\ref{s:pwerdd} as an 
application of Theorem~\ref{pergolesisalvereginainaminor}.

\subsection{$m$-full numbers represented by polynomials} Let $m\in \N$, $m\geq 2$. An integer is called $m$-full if it is divided by the $m$-th power 
of every of its prime divisors. Values of polynomials that are $m$-full have been studied by various authors; for example, Pasten~\cite{MR3057059}
proved that an integer polynomial should take very few  $m$-full values conditionally on Vojta's conjecture and have applications to the study of Campana points (see for example \cite{DamarisMarta}). The picture for multivariable polynomials 
is rather different; indeed here one expects infinitely many solutions. We provide asymptotics.

 \begin{theorem}\label{thm:mful}Fix an integer $m\geq 2$. For every smooth homogeneous $f\in \Z[x_1,\ldots, x_n]$ of degree~$d$ with $n>2^d(d-1)$
and  for every box $\c B\subset \R^n$ as in Definition~\ref{def:box}   inside which $f$ assumes at least one  strictly positive value,
we have $$\lim_{P\to\infty}  \frac{\#\left\{ \b x \in \Z^n \cap P \c B : f(\b x ) \textrm{ is }m\textrm{-full and positive}\right\}}{P^{n-d(1-1/m) } }   
=\sigma(f) \lim_{\delta\to 0_+}\int\limits_{\substack{ \b t \in \c B \\   f(\b t ) > \delta } }  \frac{\mathrm d \b t }{m    f(\b t )^{1-1/m}}
,$$ where $\sigma(f)$ is given by $$ \sum_{\substack{ k_2,\ldots, k_m \in \N   } } \frac{ \mu(k_2\cdots k_m)^2}{
 k_2^{1+1/m} k_3^{1+2/m} \cdots k_{m}^{ 2-1/m }  } 
 \prod_{p} 
 \lim_{m\to \infty }  \frac{\#\{y,\b t \in (\Z/p^m\Z)^{n+1}: 
y^m k_2^{m+1}\cdots k_{m}^{2m-1} = f(\b t )  \}}{p^{m n }} .$$\end{theorem}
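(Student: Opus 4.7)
The plan is to parameterise each positive $m$-full integer by its canonical decomposition and apply Theorem~\ref{pergolesisalvereginainaminor} to each resulting piece. The starting point is the observation that every positive $m$-full integer $N$ admits a \emph{unique} representation
$$N=y^m k_2^{m+1}k_3^{m+2}\cdots k_m^{2m-1},\qquad y,k_2,\ldots,k_m\in\N,\qquad k_2k_3\cdots k_m\text{ squarefree}.$$
This follows prime-by-prime: for $e=v_p(N)\in\{0\}\cup[m,\infty)$, the representation $e=ma+\epsilon$ with $\epsilon\in\{0,m+1,m+2,\ldots,2m-1\}$ and $a\geq \mathds 1(\epsilon=0)$ is unique; the value $\epsilon$ selects which (if any) of $k_2,\ldots,k_m$ receives a factor of $p$, while $a=v_p(y)$. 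Writing $L(\b k):=k_2^{m+1}\cdots k_m^{2m-1}$, the counting function decomposes as
$$\#\{\b x\in\Z^n\cap P\c B:f(\b x)\text{ is $m$-full and positive}\}=\sum_{\substack{\b k=(k_2,\ldots,k_m)\in\N^{m-1}\\ k_2\cdots k_m\text{ squarefree}}}A_{\b k}(P),$$
with $A_{\b k}(P):=\#\{(\b x,y)\in(\Z^n\cap P\c B)\times\N: f(\b x)=L(\b k)y^m\}$.

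For each fixed $\b k$ I apply Theorem~\ref{pergolesisalvereginainaminor} with $s=1$ to the function
$$k_L(\nu):=\mathds 1\{\exists y\in\N:\nu=L(\b k)y^m\},$$
taking $\omega_L(t):=\tfrac{1}{mL^{1/m}}t^{1/m-1}$ (interpreted as $0$ for $t<L$) and $\rho_L(a,q):=q^{-1}\#\{y\in\Z/q\Z:Ly^m\equiv a\pmod q\}$. Verifying the hypotheses is routine: $\rho_L\geq 0$; $\int_1^x\omega_L(t)\,\mathrm dt\sim L^{-1/m}x^{1/m}$ is eventually positive; equidistribution of $m$-th power residues in arithmetic progressions of \emph{fixed} modulus yields $E(x,q)=o(x^{1/m})$ by elementary counting; the $\liminf$ condition reduces by homogeneity of $f$ to $\int_{\c B,\,f(\b t)>0}f(\b t)^{1/m-1}\mathrm d\b t>0$, which holds since $f$ attains a strictly positive value on $\c B$; and the $\limsup$ condition is the bound $\sum_{\nu\leq x}k_L(\nu)\ll x^{1/m}$. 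The output is
$$A_{\b k}(P)\sim \sigma_{\b k}(f)\cdot\frac{P^{n-d(1-1/m)}}{L^{1/m}}\cdot\lim_{\delta\to 0_+}\int_{\substack{\b t\in\c B\\ f(\b t)>\delta}}\frac{\mathrm d\b t}{m\,f(\b t)^{1-1/m}},$$
with
$$\sigma_{\b k}(f):=\lim_{z\to\infty}T_z^{-n+1}\sum_{\b t\in(\Z/T_z\Z)^n}\rho_L(f(\b t),T_z),$$
which, by the Chinese Remainder Theorem and the multiplicativity of $\rho_L$ in $q$, factorises as
$$\sigma_{\b k}(f)=\prod_{p}\lim_{\ell\to\infty}\frac{\#\{(y,\b t)\in(\Z/p^{\ell}\Z)^{n+1}:Ly^m\equiv f(\b t)\pmod{p^{\ell}}\}}{p^{\ell n}}.$$

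Summing these asymptotics over $\b k$ and using the identity $L^{-1/m}=k_2^{-(1+1/m)}k_3^{-(1+2/m)}\cdots k_m^{-(2-1/m)}$, with $\mu(k_2\cdots k_m)^2$ encoding the squarefreeness of $k_2\cdots k_m$, recovers the announced formula for $\sigma(f)$. The principal obstacle is to justify the interchange of the limit as $P\to\infty$ with the sum over $\b k$. A uniform tail bound $A_{\b k}(P)\ll P^{n-d(1-1/m)}L^{-1/m}$ follows from the pointwise estimate $\#\{\b x\in\Z^n\cap P\c B:f(\b x)=N\}\ll P^{n-d}$, uniform in $N$ (a consequence of Birch-type minor arc analysis available in the range $n>2^d(d-1)$), summed trivially over $y\leq(bP^d/L)^{1/m}$. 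The resulting majorant is dominated by
$$\sum_{\b k}\mu(k_2\cdots k_m)^2 L(\b k)^{-1/m}=\prod_p\left(1+\sum_{j=2}^{m}p^{-(m+j-1)/m}\right)<\infty,$$
which converges thanks to $(m+j-1)/m>1$ for $j\geq 2$, so restricting the sum to $L(\b k)>K$ yields a tail that vanishes as $K\to\infty$ uniformly in $P$.
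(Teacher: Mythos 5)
Your proposal is correct, but you take a genuinely different route from the paper. The paper applies Theorem~\ref{pergolesisalvereginainaminor} exactly once, taking $k$ to be the indicator function of $m$-full integers, and verifies its hypotheses by performing the decomposition $N=k_1^m k_2^{m+1}\cdots k_m^{2m-1}$ and the sum over $\b k=(k_2,\ldots,k_m)$ entirely at the level of the one-variable count $\#\{N\leq x:\ N\text{ $m$-full},\ N\equiv r\bmod q\}$; this produces the density $\rho(r,q)$ as an infinite series over $\b k$, and the interchange of $\sum_{\b k}$ with everything else is trivial because it happens before the circle method ever enters. You instead split the multivariable count itself as $\sum_{\b k}A_{\b k}(P)$, apply Theorem~\ref{pergolesisalvereginainaminor} separately to each indicator $k_L$, and then sum the resulting asymptotics. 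This is legitimate, and you correctly identify the price: one must interchange $\lim_{P\to\infty}$ with $\sum_{\b k}$, which requires the uniform-in-$\b k$ bound $A_{\b k}(P)\ll P^{n-d(1-1/m)}L(\b k)^{-1/m}$ coming from the Birch-type uniform estimate $\#\{\b x\in\Z^n\cap P\c B:f(\b x)=N\}\ll P^{n-d}$, together with the convergence of the majorant $\sum_{\b k}\mu(k_2\cdots k_m)^2 L(\b k)^{-1/m}$. Your verification of the hypotheses for each fixed $\b k$ and the final bookkeeping recovering $\sigma(f)$ are all sound. The paper's ordering is cleaner because it avoids the dominated-convergence step for the $n$-variable count; your ordering is more modular and closer to the proof of Theorem~\ref{thm:hyperelipt}, at the cost of needing the uniform Birch bound explicitly. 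Both are valid proofs.
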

The proof is given in \S \ref{s:fulcrp} as an application of 
Theorem~\ref{pergolesisalvereginainaminor}.

\begin{acknowledgements}We would like to thank   Ping Xi for asking questions during a  seminar in 
Xi’an Jiaotong University
that started activities that   led to Theorem \ref{lem:vachms}. Parts of this 
investigation too place 
when the second named author visited the university Paris-Saclay during April $2022$,
 the generous hospitality and support of which  is greatly appreciated.
\end{acknowledgements}

\section{Proof of 
Theorems~\ref{lem:vachms} and \ref{pergolesisalvereginainaminor}} 
The Hardy--Littlewood singular series for the problem of representing an integer $\nu$ by the polynomial   $f$ is given by  
$$\mathfrak S(\nu):=\prod_{\substack{ p=2 \\ p \textrm{ prime}}}^\infty \lim_{m\to \infty} 
\frac{\#\{\b t \in (\Z/p^{m}\Z )^{n}:  f(\b t ) \equiv \nu \md{p^m} \}}{p^{m(n-1) }}.$$ We start by approximating it by its multiplicative model
$$\mathfrak S^\flat(\nu):= \frac{\#\{\b t \in (\Z/W_z\Z )^{n}:  f(\b t ) \equiv \nu \md{p^m}  \}}{W_z^{n-1 }} .$$ 
\begin{lemma}\label{lem:cramer}Let $f$ be as  in     
Theorem~\ref{lem:vachms} and assume that $ \widetilde{\epsilon}(z) \to 0$. 
There exists $c=c(f)>0$ such that for all $\nu \in \Z$ we have 
$$\mathfrak S(\nu)-\mathfrak S^\flat(\nu)\ll  \widetilde{\epsilon}(z) +z^{-c},$$ where the implied constant is independent of $\nu$.
\end{lemma}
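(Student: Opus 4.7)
\textbf{Proof proposal for Lemma~\ref{lem:cramer}.} The plan is to exploit multiplicativity. By the Chinese Remainder Theorem,
$$
\mathfrak S(\nu)=\prod_{p}\sigma_p(\nu),\qquad \mathfrak S^\flat(\nu)=\prod_{p\leq z}\sigma_p^{(m_p)}(\nu),
$$
where for each prime $p$ and $m\in\N$,
$$
\sigma_p^{(m)}(\nu):=\frac{\#\{\b t\in(\Z/p^m\Z)^n:f(\b t)\equiv\nu\pmod{p^m}\}}{p^{m(n-1)}},\qquad \sigma_p(\nu):=\lim_{m\to\infty}\sigma_p^{(m)}(\nu).
$$
I would establish, uniformly in $\nu$, the three bounds: (a) $|\sigma_p(\nu)-1|\ll p^{-\eta}$ for every prime $p$ and some $\eta=\eta(f)>1$; (b) $|\sigma_p(\nu)-\sigma_p^{(m_p)}(\nu)|\ll p^{-1-m_p}$ for each $p\leq z$; (c) $\prod_p|\sigma_p(\nu)|=O_f(1)$, which is an immediate consequence of (a).

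For (a), I would open the indicator by additive characters, writing
$$
\sigma_p^{(m)}(\nu)=\frac{1}{p^{mn}}\sum_{c\pmod{p^m}}\mathrm{e}(-c\nu/p^m)\sum_{\b t\pmod{p^m}}\mathrm{e}\bigl(c f(\b t)/p^m\bigr),
$$
isolating the $c=0$ contribution (which equals exactly $1$) and bounding the complete exponential sums in $\b t$ for $c\neq 0$ by the Birch-type estimates attached to smooth forms. The hypothesis $n>2^d(d-1)$, together with the smoothness of $f$, produces sufficient cancellation in these sums to give an exponent $\eta>1$ that is uniform in $c,\nu,p$ and $m$; passing to $m\to\infty$ then yields (a).

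For (b), I would compare consecutive densities $\sigma_p^{(m)}(\nu)$ and $\sigma_p^{(m+1)}(\nu)$ via Hensel lifting. At each solution $\b t_0\pmod{p^m}$ with $\nabla f(\b t_0)\not\equiv \b 0\pmod p$ there are exactly $p^{n-1}$ lifts modulo $p^{m+1}$, so such points contribute identically to both densities. The discrepancy therefore comes only from solutions with vanishing gradient modulo $p$; the smoothness of $f$ forces the singular locus of $f=\nu$ over $\F_p$ to have codimension at least one for all but finitely many $p$, producing $|\sigma_p^{(m+1)}(\nu)-\sigma_p^{(m)}(\nu)|\ll p^{-1-m}$. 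Summing this geometric series over $m\geq m_p$ yields (b) with an implied constant independent of $p$ and $\nu$ (the finitely many bad primes are harmless since $m_p\to\infty$ is built into the assumption $\widetilde\epsilon(z)\to 0$).

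The proof would conclude by combining these three inputs through the telescoping identity
\begin{align*}
\mathfrak S(\nu)-\mathfrak S^\flat(\nu)=\Bigl(\prod_{p>z}\sigma_p(\nu)-1\Bigr)\prod_{p\leq z}\sigma_p(\nu)+\prod_{p\leq z}\sigma_p(\nu)-\prod_{p\leq z}\sigma_p^{(m_p)}(\nu).
\end{align*}
The first term is $O(z^{-(\eta-1)})$ by (a) and (c). The second is bounded via the elementary estimate $\sum_{p\leq z}|\sigma_p(\nu)-\sigma_p^{(m_p)}(\nu)|\prod_{q\leq z,q\neq p}\max(|\sigma_q(\nu)|,|\sigma_q^{(m_q)}(\nu)|)=O(\widetilde\epsilon(z))$, which follows from (b) and (c). The main obstacle is step (a): producing a uniform-in-$\nu$ decay with an exponent $\eta$ strictly larger than $1$, which is exactly what makes the Euler product over $p>z$ converge absolutely. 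This is the role of Birch's threshold $n>2^d(d-1)$, and the technical subtlety is to maintain uniformity across all $\nu$ even though the geometry of the level set $f=\nu$ varies with $\nu$.
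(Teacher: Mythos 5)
Your steps (a) and (c), and your final telescoping combination, are in line with the paper's argument: the paper likewise opens $\sigma_p^{(m)}(\nu)$ via additive characters (yielding the exponential sums $S_{a,p^r}$), applies Birch's bound $|S_{a,p^r}|\ll p^{r(n-c-2)}$ with $c=n2^{-d}/(d-1)-1>0$ to get $|\sigma_p(\nu)-1|\ll p^{-(1+c)}$ uniformly in $\nu$, and obtains $\prod_{p>z}\sigma_p(\nu)=1+O(z^{-c})$ together with $\mathfrak S(\nu)=O(1)$.

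Where you diverge from the paper is step (b), and this is where there is a genuine gap. You assert that smooth Hensel lifting, combined with ``codimension at least one'' of the singular locus of $f=\nu$ over $\F_p$, gives the per-level bound $|\sigma_p^{(m+1)}(\nu)-\sigma_p^{(m)}(\nu)|\ll p^{-1-m}$. This reasoning is not correct as stated. For a smooth form, the singular locus of $\nabla f=0$ is just the origin, so a singular solution $\b t_0\bmod p^m$ must satisfy $\b t_0\equiv 0\bmod p$; writing $\b t_0=p\b s$ forces $f(\b t_0)=p^d f(\b s)$, which couples the analysis to the $p$-adic valuation of $\nu$ and produces a contraction factor of $p^{d-n}$ per $d$ levels of lifting (a recursive rescaling), not a factor of $p^{-1}$ per level. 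In particular the per-level discrepancy is $O(p^{d-n})$, independent of $m$, and the cumulative bound $|\sigma_p(\nu)-\sigma_p^{(m_p)}(\nu)|\ll p^{-1-m_p}$ only emerges after chasing this recursion and exploiting $n>2d$; the case $\nu=0$, where the recursion never terminates, and the primes where $m_p$ is small both require separate care. Moreover, the ``finitely many bad primes are harmless since $m_p\to\infty$'' remark is not available: the hypothesis $\widetilde\epsilon(z)\to 0$ allows $m_p$ to stay bounded on a sparse set of primes, so you cannot simply discard them.

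The paper sidesteps all of this. It uses the exact algebraic identity recorded as the last displayed equation on page 259 of Birch's paper, namely that the \emph{truncated} exponential sum series $\sum_{0\leq r\leq m_p}p^{-rn}\sum_{a}S_{a,p^r}\mathrm e(-a\nu/p^r)$ equals $\sigma_p^{(m_p)}(\nu)$ exactly. With this identity in hand, the comparison you are trying to prove in (b) is nothing more than the tail estimate $\sum_{r>m_p}p^{-r(1+c)}\ll p^{-(1+c)(1+m_p)}$ — the same Birch bound already used for (a) — so there is no separate Hensel lifting argument to run, and the bound is both stronger and uniform in $\nu$ for free. If you want to keep a Hensel-style proof of (b), you would need to carry out the recursive scaling argument carefully rather than appeal to codimension one; but the identity-based route is substantially cleaner and is what the Lemma actually uses.
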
\begin{proof}By  \cite[\S 7]{MR0150129} we have 
$$\lim_{m\to\infty}\frac{\#\{\b t \in (\Z/p^{m}\Z )^{n}:  f(\b t ) = \nu  \}}{p^{m(n-1) }}
= 1+  \sum_{ r\in \N  }  p^{-rn} \sum_{\substack{  a \in  \Z/p^r\Z \\ p\nmid  a  } }S_{  a,p^r}\mathrm e (- a \nu  /p^r) ,$$ where  
\beq{defbaH}{  S_{ a,q} :=\sum_{\b t \in (\Z/q\Z)^n}\mathrm e \left(  \frac{a}{q} f (\b t ) \r).}  The assumption $n >  2^d (d-1)$ shows that the constant
\beq{eq:deprez}{c:=   \frac{n 2^{-d}}{(d-1)}-1 } is strictly positive. 
Using   \cite[pg. 256, Eq. (19)]{MR0150129}  with $\epsilon =c$ 
we see that \beq{son234}{|S_{a,p^r}| \ll  p^{r \left( n-c-2 \r)}}
with an implied constant that depends at most on $f$. We infer that for all $m \geq 1 $ one has 
\beq{eq:kotozimi12}
{\left| \sum_{ r\geq m  }  p^{-rn} \sum_{\substack{  a \in \Z/p^r\Z\\ p\nmid a  } }S_{ a,p^r}\mathrm e (- a  \nu  /p^r) \r| \ll  
 \sum_{ r\geq m  }  p^{-r(1+c) }  \ll p^{-m(1+c) } ,}
where the implied constant depends at most on $ f $.
In the special case that $m=1$ this yields 
\beq{eq:unifmcaf}{ \left|\lim_{m\to\infty}\frac{\#\{\b t \in (\Z/p^{m}\Z )^{n}:  f(\b t ) \equiv \nu \md{p^m}  \}}{p^{m(n-1) }}
-1\r | \ll  p^{-(1+c)}} uniformly in $\nu$.
Hence, for any $z>1$ we have 
$$ \log \prod_{p>z} \lim_{m\to\infty}\frac{\#\{\b t \in (\Z/p^{m}\Z )^{n}:  f(\b t ) \equiv \nu \md{p^m}  \}}{p^{m(n-1) }} 
= \sum_{p>z} \log \left(1+O(p^{-1-c})  \r)  \ll \sum_{p>z} p^{-1-c} \ll z^{-c}$$
and therefore
$$
\prod_{p>z} \lim_{m\to\infty}\frac{\#\{\b t \in (\Z/p^{m}\Z )^{n}:  f(\b t ) \equiv \nu \md{p^m}  \}}{p^{m(n-1) }} =1+O\left(z^{-c}\right).
$$
We have $\mathfrak S(\nu)=O(1)$ independently of $\nu$ 
due to~\cite[Corollary, pg.256]{MR0150129}. Thus, $$ \mathfrak S(\nu)= 
\left( \prod_{p\leq z} \lim_{m\to\infty}\frac{\#\{\b t \in (\Z/p^{m}\Z )^{n}:  f(\b t ) \equiv \nu \md{p^m}  \}}{p^{m(n-1) }}\r) +O(z^{-c})$$
with an implied constant independent of $\nu$. By \eqref{eq:kotozimi12} we see that  for any   $m_p\geq 1$,  
$$  \lim_{m\to\infty}\frac{\#\{\b t \in (\Z/p^{m}\Z )^{n}:  f(\b t ) \equiv \nu \md{p^m}  \}}{p^{m(n-1) }}-
\sum_{ 0\leq r \leq m_p }  p^{-rn} \!\!\!\sum_{\substack{  a \in \Z/p^r\Z\\ p\nmid a  } }S_{a,p^r}\mathrm e (-  a   \nu  /p^r)
\ll   p^{-(1+c) (1+m_p)}.$$  This means that 
$$\prod_{p\leq z}  \lim_{m\to\infty}\frac{\#\{\b t \in (\Z/p^{m}\Z )^{n}:  f(\b t ) \equiv \nu \md{p^m}  \}}{p^{m(n-1) }}=V  
 \prod_{p\leq z} \left ( \sum_{ 0\leq r \leq m_p }  p^{-rn} \sum_{\substack{  a \in \Z/p^r\Z\\ p\nmid  a  } }
S_{a,p^r}\mathrm e (-a  \nu  /p^r)\r),$$ for some quantity $V$ that satisfies 
 $$V=\prod_{p\leq z} \left(1+O\left (p^{-(1+c) (1+m_p)}\r) \r)=
\exp\left(O\left(\sum_{p\leq z } p^{-(1+c) (1+m_p)} \r)\r) =1+O(\widetilde{\epsilon}(z)) $$ due to the assumption $\widetilde{\epsilon}(z)\to 0 $.
Recalling that  $ W_z=\prod_{p\leq z}p^{m_p},$ the last equation in  \cite[pg. 259]{MR0150129} gives
\beq{eq:touselater}{ \sum_{ 0\leq r \leq m_p }  p^{-rn} \sum_{\substack{  a \in \Z/p^r\Z\\ p\nmid  a  } }S_{ a,p^r}\mathrm e (- a \nu  /p^r)
=\frac{\#\{\b t \in (\Z/p^{m_p} )^{n}:  f(\b t ) \equiv \nu \md{p^m}  \}}{p^{(n-1) m_p}},}   which concludes the proof via the Chinese remainder theorem.
\end{proof}The Hardy--Littlewood singular integral for       representing     $\nu$ by       $f$ is   
$$ J(\nu):=\int_{\R} I(\c B;\gamma)\mathrm e(- \gamma \nu    ) \mathrm d \gamma,
 \ \ \ \textrm{ where }  \ \ \ I(\c B;\gamma):=\int_{\c B} \mathrm e \left (  \gamma f(\b t ) \r )\mathrm d  \b t   . $$

We are now in position to start averaging the arithmetic function over the polynomial values.
\begin{lemma}
\label{lem:letsstart}Let $f$ be as  in      Theorem~\ref{lem:vachms} and assume that $ \widetilde{\epsilon}(z) \to 0$. 
There exists $c=c(f)>0$ such that for all large $P$ one has 
\begin{align*} \sum_{\substack{ \b t \in \Z^n\cap P\c B \\  s f(\b t ) > 0 }}  k(sf(\b t) ) 
=&  \frac{P^{n-d} }{W_z^{n-1}} \sum_{\b t \in (\Z/W_z )^{n} }  \int_{\R  } I(\c B;\gamma)
 \left\{ \sum_{ \substack{ \nu\in \N\cap[1,b P^d] \\ \nu \equiv s f(\b t ) \md{W_z} }} k( \nu)  
  \mathrm e(- \gamma s \nu P^{-d}    ) \r\} \mathrm d \gamma
\\  +&O\left( P^{n-d} \left( P^{-\delta} +  \widetilde{\epsilon}(z)  + z^{-c}  
 \r) 
\|k\|_1 \r).
\end{align*}
\end{lemma}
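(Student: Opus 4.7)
The plan is to reduce the sum to individual level sets of $f$, apply the uniform Birch circle-method asymptotic to each one, and then pass from the full Hardy--Littlewood singular series to its Cram\'er--Granville model via Lemma~\ref{lem:cramer}.

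Since $f$ is homogeneous of degree $d$ and $\c B\subset[-1,1]^n$, every $\b t\in \Z^n\cap P\c B$ satisfies $|f(\b t)|\leq (b/2)P^d$, so the left-hand side decomposes as
\[
 \sum_{\substack{\b t\in\Z^n\cap P\c B\\ sf(\b t)>0}} k(sf(\b t))
 =\sum_{\nu\in\N\cap[1,bP^d]} k(\nu)\, N_s(\nu;P),\qquad N_s(\nu;P):=\#\{\b t\in\Z^n\cap P\c B:f(\b t)=s\nu\}.
\]
For the inner counting function I would invoke the uniform version of Birch's theorem for smooth forms~\cite{MR0150129}: under the hypothesis $n>2^d(d-1)$, there exists $\delta=\delta(f)>0$ such that, uniformly in $\nu\in\Z$ with $|\nu|\leq bP^d$,
\[
 N_s(\nu;P)=\mathfrak S(s\nu)\,P^{n-d}\int_{\R}I(\c B;\gamma)\,\mathrm e(-\gamma s\nu P^{-d})\,\mathrm d\gamma+O(P^{n-d-\delta}).
\]
The singular integral here is $J(s\nu P^{-d})$, obtained via the homogeneity substitution $\b t\mapsto P\b t$, and the absolute convergence $\int_{\R}|I(\c B;\gamma)|\,\mathrm d\gamma=O(1)$ follows from the same Weyl-type input~\eqref{son234} used in Lemma~\ref{lem:cramer}.

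Substituting back and using the uniform bound $\mathfrak S(s\nu)=O(1)$ from \cite[Corollary, pg.~256]{MR0150129}, the Birch remainder contributes $O(P^{n-d-\delta}\|k\|_1)$ to the weighted sum. In the main term, Lemma~\ref{lem:cramer} gives $\mathfrak S(s\nu)=\mathfrak S^\flat(s\nu)+O(\widetilde\epsilon(z)+z^{-c})$ uniformly in $\nu$, which combined with the bound $P^{n-d}|J(s\nu P^{-d})|\ll P^{n-d}$ produces an additional error of $O(P^{n-d}(\widetilde\epsilon(z)+z^{-c})\|k\|_1)$. For what remains I rewrite
\[
 \mathfrak S^\flat(s\nu)=\frac{1}{W_z^{n-1}}\sum_{\b t\in(\Z/W_z\Z)^n}\mathds 1\bigl(f(\b t)\equiv s\nu\md{W_z}\bigr);
\]
since $s\in\{-1,1\}$ the congruence is equivalent to $\nu\equiv sf(\b t)\md{W_z}$, and swapping the $\b t$- and $\nu$-summations delivers exactly the expression on the right-hand side of the lemma.

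The main obstacle is securing Birch's asymptotic with a power-saving error that is uniform in the shift $s\nu$ across the full range $|\nu|\leq bP^d$. Birch's original statement concerns the zero level set, but the major/minor arcs apparatus applies verbatim to $f(\b t)=s\nu$: the minor-arc contribution is bounded via~\eqref{son234} independently of $\nu$, while the major-arc treatment reconstructs $\mathfrak S(s\nu)P^{n-d}J(s\nu P^{-d})$ with implied constants that likewise remain uniform in $\nu$. Once this uniformity is verified, the remaining steps are direct substitution and reindexing.
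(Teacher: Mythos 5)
Your proposal follows essentially the same route as the paper: decompose the sum over level sets, invoke the uniform Birch asymptotic from \cite[Lem.~5.5]{MR0150129} for $\#\{\b t\in\Z^n\cap P\c B : f(\b t)=s\nu\}$, replace $\mathfrak S$ by $\mathfrak S^\flat$ via Lemma~\ref{lem:cramer}, and reindex using the definition of $\mathfrak S^\flat$. The one step you mention only implicitly is the interchange of the finite $\nu$-sum with the $\gamma$-integral to reach the displayed form; the paper justifies this by $\int_\R|I(\c B;\gamma)|\,\mathrm d\gamma<\infty$ from \cite[Lem.~5.2]{MR0150129}, which is the same absolute-convergence input you already cite, so this is a cosmetic omission rather than a gap.
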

\begin{proof}For    $P$ suitable large and all $\b t \in P\c B$ we have  $ |f(\b t)| \leq b P^d$, hence,
\beq{eq:doubleviolinconcert}{\sum_{\substack{ \b t \in \Z^n\cap P\c B \\  s f(\b t ) > 0 }}  k(sf(\b t) ) 
=  \sum_{ \nu\in \N\cap[1,b P^d]} k( \nu) \#\{\b t \in \Z^n \cap P\c B  :  f(\b t)= s  \nu \}.}
By~\cite[Lem.5.5]{MR0150129} there is $\delta=\delta(  f )>0$ such that  $$\#\{\b t \in \Z^n \cap P\c B  : f (\b t)= s   \nu \} =
P^{n-d} \mathfrak S(s \nu) J(s \nu P^{-d})+O(P^{n-d-\delta})
,$$ where the implied constant is independent of $s  $ and $\nu $. 
By Lemma~\ref{lem:cramer}
we can write the right-hand side as   $$ P^{n-d}  \mathfrak S^\flat(s\nu)  
\int_{\R} I(\c B;\gamma)\mathrm e(- \gamma s \nu P^{-d}    ) \mathrm d \gamma  
+O\left(P^{n-d} \left( P^{-\delta} +  \widetilde{\epsilon}(z)  +z^{-c} 
 \r) \r),$$ where we used that $J(s \nu P^{-d})$ is bounded only in terms of $f$.
Substituting into~\eqref{eq:doubleviolinconcert} we obtain the claimed error term. In the  ensuing main term 
we can interchange the order of summation and integration owing to the fact that 
$ \int_{\R} | I(\c B;\gamma)|\mathrm d \gamma  <\infty $ by   \cite[Lem. 5.2]{MR0150129}. This gives 
$$P^{n-d} \int_{\R } I(\c B;\gamma) \left\{
\sum_{ \nu\in \N\cap[1,b P^d]} k( \nu)  \mathfrak S^\flat(s\nu)  \mathrm e(- \gamma s \nu P^{-d}    ) \r\}
\mathrm d \gamma.$$ By definition of the function $ \mathfrak S^\flat(s\nu)  $, 
we obtain $$P^{n-d} W_z^{-n+1} \int_{\R } I(\c B;\gamma)
\sum_{\b t \in (\Z/W_z\Z )^{n} }  \left\{ \sum_{ \substack{ \nu\in \N\cap[1,b P^d] \\ \nu \equiv s f(\b t ) \md{W_z} }} k( \nu)  
  \mathrm e(- \gamma s \nu P^{-d}    ) \r\} \mathrm d \gamma,$$ as claimed.  \end{proof} The presence of $ I(\c B;\gamma)$ in the main term 
shows that only small $\gamma$ matter. Therefore, a simple approach to deal with those will suffice. 
\begin{lemma}\label{lem:partsum}For any $x\geq 1$, $a,q\in \mathbb Z$ with $q\neq 0$ and any $\mu\in  \R$ we have 
$$
\sum_{\substack{ 1\leq \nu \leq x \\ \nu \equiv a \md q  }} k(\nu ) \mathrm e (\mu \nu )=
\rho( a,q) \int_1^x \omega(t)\mathrm e (\mu t) \mathrm dt
+O\big((1+ |\mu | x) E(x,q) \big)
,$$   where the implied constant is absolute.
\end{lemma}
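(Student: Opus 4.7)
The plan is to apply Abel partial summation to both the discrete sum on the left and the integral on the right, after which the definition of $E(x,q)$ immediately gives the desired error bound. The key observation is that the definition in~\eqref{eq:basicproperty} is exactly tailored to compare the partial sums of $k$ on the progression $a\bmod q$ with the weighted integral $\rho(a,q)\int_1^y\omega(t)\mathrm dt$; once we have expressed both sides of the claimed identity in terms of these partial sums (via summation/integration by parts), the difference will be controlled pointwise by $E(x,q)$ uniformly for $y\in[1,x]$.

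Concretely, I would introduce
$$S(y):=\sum_{\substack{1\leq m \leq y\\ m\equiv a \bmod q}} k(m),\qquad T(y):=\rho(a,q)\int_1^y \omega(t)\mathrm dt,$$
so that $|S(y)-T(y)|\leq E(x,q)$ for every $y\in[1,x]$ by definition. Abel summation applied with test function $\mathrm e(\mu \cdot)$ yields
$$\sum_{\substack{1\leq \nu\leq x\\ \nu\equiv a \bmod q}} k(\nu)\mathrm e(\mu\nu)= S(x)\mathrm e(\mu x) - 2\pi i\mu \int_1^x S(y)\mathrm e(\mu y)\mathrm dy,$$
while integration by parts (noting $T(1)=0$) gives the analogous identity
$$\rho(a,q)\int_1^x \omega(y)\mathrm e(\mu y)\mathrm dy = T(x)\mathrm e(\mu x) - 2\pi i\mu\int_1^x T(y)\mathrm e(\mu y)\mathrm dy.$$
Subtracting these two identities produces a boundary term $(S(x)-T(x))\mathrm e(\mu x)$ of size $O(E(x,q))$ and an integrated term bounded by $2\pi|\mu|(x-1)E(x,q)$, which combine to give the claimed $O((1+|\mu|x)E(x,q))$ error.

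There is no real obstacle in this argument; it is essentially a bookkeeping exercise. The only minor care needed is to handle the boundary of integration at $y=1$ correctly (which is clean because $T(1)=0$ and $S(1)$ consists of at most the single term $k(1)\mathbf 1(1\equiv a\bmod q)$ that is absorbed by the partial summation identity), and to make sure the factor $2\pi$ from differentiating $\mathrm e(\mu y)$ is absorbed into the implied absolute constant in the big-$O$.
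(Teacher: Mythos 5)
Your argument is correct and is essentially the same as the paper's: both proofs apply partial summation with test function $\mathrm e(\mu\cdot)$, invoke the definition of $E(x,q)$ to replace the partial sums $S(y)$ by $T(y)=\rho(a,q)\int_1^y\omega$ with uniform error $E(x,q)$, and then recognise the resulting expression as $\rho(a,q)\int_1^x\omega(t)\mathrm e(\mu t)\mathrm dt$ via integration by parts. The only cosmetic difference is that you subtract two parallel partial-summation identities, whereas the paper substitutes the approximation into one of them and then undoes the integration by parts; the estimates produced are identical.
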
\begin{proof}By partial summation we obtain 
$$ A(x)  \mathrm e (\mu x )-2 \pi i \mu \int_1^x A(t) \mathrm e (\mu t ) \mathrm dt,  \ \ \ \textrm{ where } \ \ \ 
 A(t):= \sum_{\substack{ 1\leq \nu \leq t \\ \nu \equiv a \md q  }} k(\nu ).$$ 
Alluding to~\eqref{eq:basicproperty}  we then obtain 
$$ 
\rho( a,q) 
\left\{ 
 \mathrm e (\mu x )  \int_1 ^x  \omega( t) \mathrm d  t   
-2 \pi i \mu  
\int_1^x 
\int_1 ^t  \omega( u) 
\mathrm e (\mu t ) \mathrm d  u \mathrm dt
\r\}
+O\left((1+ |\mu | x) E(x,q) \r)
$$ and infer that the difference inside the brackets equals      $\displaystyle
\int_1^x 
\omega(t)
\mathrm e (\mu t) \mathrm dt
.$  \end{proof}
Define $$ H:=\int_\R  I(\c B;\gamma) \bigg( \int_{P^{-d}}^{b} \omega(P^d y) \mathrm e (-\gamma s y )\mathrm d y\bigg) \mathrm d \gamma .$$
\begin{lemma}\label{lem:guitar}Keep the setting of Lemma~\ref{lem:letsstart}.
There exists $c=c(f)>0$ such that for all large $P$ one has 
\begin{align*} \sum_{\substack{ \b t \in \Z^n\cap P\c B \\  s f(\b t ) > 0 }}  & k(sf(\b t) ) 
- \frac{P^{n} H }{W_z^{n-1}}  \sum_{\b t \in (\Z/W_z )^{n} } \rho(sf(\b t),W_z) 
\\  \ll & P^{n-d}
W_zE(bP^d,W_z)+P^{n-d} \left( P^{-\delta} +  \widetilde{\epsilon}(z)  + z^{-c}  
 \r) \|k\|_1 .\end{align*}\end{lemma}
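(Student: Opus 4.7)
The plan is to apply Lemma~\ref{lem:partsum} to the inner $\nu$-sum appearing in the main term of Lemma~\ref{lem:letsstart}, then rescale the resulting integral, and finally bound the accumulated error using the decay of $I(\c B;\gamma)$.

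First I would take, for each fixed residue $\b t \in (\Z/W_z\Z)^n$ and each $\gamma \in \R$, the inner sum
\[
\sum_{\substack{ 1\leq \nu \leq bP^d \\ \nu \equiv s f(\b t) \md{W_z} }} k(\nu)\, \mathrm e\bigl(-\gamma s \nu P^{-d}\bigr)
\]
and apply Lemma~\ref{lem:partsum} with $q=W_z$, $a=sf(\b t)$, $x=bP^d$ and $\mu=-\gamma s/P^d$. Since $|\mu|x = |\gamma|b$, the lemma yields
\[
\rho\bigl(sf(\b t),W_z\bigr) \int_1^{bP^d} \omega(u)\,\mathrm e\bigl(-\gamma s u P^{-d}\bigr)\,\mathrm du \;+\; O\bigl((1+|\gamma|b)\,E(bP^d,W_z)\bigr).
\]

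Next, for the main contribution I change variables $u = P^d y$ in the integral, producing a factor $P^d$ and an inner integral $\int_{P^{-d}}^{b}\omega(P^d y)\,\mathrm e(-\gamma s y)\,\mathrm dy$. Integrating this against $I(\c B;\gamma)$ over $\gamma \in \R$ (the integrand is absolutely integrable by \cite[Lem.~5.2]{MR0150129}) reproduces exactly $P^d H$, where $H$ is the quantity defined just before the lemma. Multiplying by $P^{n-d}/W_z^{n-1}$ and summing over $\b t \in (\Z/W_z\Z)^n$ gives the announced main term $P^n H W_z^{1-n}\sum_{\b t}\rho(sf(\b t),W_z)$.

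The remaining task is to control the error introduced by Lemma~\ref{lem:partsum}. Summing $(1+|\gamma|b)E(bP^d,W_z)$ over the $W_z^n$ residue classes and integrating against $|I(\c B;\gamma)|$ produces, after multiplying by $P^{n-d}W_z^{1-n}$, a total bounded by
\[
P^{n-d}\, W_z\, E(bP^d,W_z)\int_\R |I(\c B;\gamma)|\,(1+|\gamma|b)\,\mathrm d\gamma.
\]
Combining with the error term already present in Lemma~\ref{lem:letsstart} then yields the claim.

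The main obstacle is showing that $\int_\R |I(\c B;\gamma)|\,(1+|\gamma|b)\,\mathrm d\gamma$ is finite (and bounded only in terms of $f$): the raw bound from \cite[Lem.~5.2]{MR0150129} only gives integrability of $|I(\c B;\gamma)|$ itself, so the weighted estimate requires pointwise decay $|I(\c B;\gamma)|\ll_f (1+|\gamma|)^{-\kappa}$ with $\kappa>2$. This is a standard consequence of Birch's oscillatory integral estimate for smooth forms together with integration by parts, and the hypothesis $n>2^d(d-1)$ is exactly what makes the exponent $n/((d-1)2^{d-1})$ strictly larger than $2$, so that such a $\kappa$ exists; the absoluteness of the implied constant in $\gamma$ relies on $\c B$ being contained in $[-1,1]^n$ and on the smoothness of $f$.
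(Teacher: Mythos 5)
Your proposal is correct and follows essentially the same route as the paper: substitute Lemma~\ref{lem:partsum} into the expression from Lemma~\ref{lem:letsstart}, recognize the main term as $P^n H W_z^{1-n}\sum_{\b t}\rho(sf(\b t),W_z)$, and bound the accumulated error by $P^{n-d}W_z E(bP^d,W_z)$ using the decay of $I(\c B;\gamma)$. One small inaccuracy in your final paragraph: Birch's Lemma~5.2 does not merely assert integrability of $|I(\c B;\gamma)|$ — it is itself the pointwise estimate $|I(\c B;\gamma)|\ll_\epsilon \min\{1,|\gamma|^{-n/((d-1)2^{d-1})+\epsilon}\}$, and the paper simply takes $\epsilon=c$ (with $c$ as in~\eqref{eq:deprez}) to obtain $|I(\c B;\gamma)|\ll\min\{1,|\gamma|^{-c-2}\}$, so no further integration by parts is required; but your identification of $n>2^d(d-1)$ as exactly the hypothesis making the decay exponent exceed $2$ is spot on.
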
 \begin{proof}
 We inject Lemma~\ref{lem:partsum} into 
Lemma~\ref{lem:letsstart}. The contribution of the error term is 
$$ \ll 
\frac{P^{n-d} }{W_z^{n-1}} \sum_{\b t \in (\Z/W_z )^{n} }  \int_{\R  } |I(\c B;\gamma)|
\left(1+\left|\gamma s P^{-d}\right|P^d\right) E(bP^d,W_z)\mathrm d \gamma
.$$ By \cite[Lem. 5.2]{MR0150129} with $\epsilon$ being the positive      constant $c$  in~\eqref{eq:deprez}
we obtain   \beq{bchrand4}{| I(\c B;\gamma) | \ll \min\{1,|\gamma |^{-c-2} \},} hence, $\displaystyle\int_\R | I(\c B;\gamma) | (1+|\gamma|) \mathrm d \gamma<\infty$. This gives the estimate 
$$ \ll E(bP^d,W_z)
\frac{P^{n-d} }{W_z^{n-1}} \sum_{\b t \in (\Z/W_z )^{n} }  \int_{\R  } I(\c B;\gamma)
(1+|\gamma |) \mathrm d \gamma\ll E(bP^d,W_z)P^{n-d} W_z 
.$$

 \end{proof}
To deal with   $H$   we generalise the Fourier analysis approach in~\cite{MR3996328}. 
For $k\in \N$ let $\phi_k:\R\to \R$ be 
$ \phi_k(\gamma ) = \pi^{-1/2} k \exp(-k^2 \gamma ^2),$ whose Fourier transform is
$ \widehat{\phi}_k(\gamma ) =\exp(-\pi^2 k^{-2}  \gamma^2)$.
Define $$H_k=\int_{ \gamma   \in \R }  \widehat{\phi}_k(\gamma ) I(\c B;\gamma)
\int_{   P^{-d} }^b \omega( P^d  y)  
\mathrm e(-  \gamma   s  y  )
\mathrm d  y
\mathrm d \gamma.$$

\begin{lemma}\label{lem:nbnd}For fixed $P\geq 1$ 
we have  $ H_k\to H$ as $k\to +\infty$. \end{lemma}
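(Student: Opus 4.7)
The plan is to apply the dominated convergence theorem to the difference $H-H_k$, viewed as a single integral over $\gamma\in\R$. First, I rewrite
\[
H-H_k=\int_{\R}\bigl(1-\widehat{\phi}_k(\gamma)\bigr)\, I(\c B;\gamma)\,G(\gamma)\,\mathrm d\gamma,
\qquad G(\gamma):=\int_{P^{-d}}^{b}\omega(P^d y)\,\mathrm e(-\gamma s y)\,\mathrm d y,
\]
using that for fixed $P$ the $\gamma$-integrals defining $H$ and $H_k$ are absolutely convergent (this is where I invoke the bound \eqref{bchrand4} on $|I(\c B;\gamma)|$).

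For the pointwise convergence, note that for every fixed $\gamma\in\R$ one has $\widehat{\phi}_k(\gamma)=\exp(-\pi^2 \gamma^2/k^2)\to 1$ as $k\to\infty$, so the integrand converges to $0$ pointwise. To produce an integrable dominating function I would bound $G$ and the kernel separately. Since $\omega\in\c C^1([1,\infty))$ and $y\in[P^{-d},b]$ forces $P^d y\in[1,bP^d]$, the function $y\mapsto \omega(P^d y)$ is continuous on the compact interval $[P^{-d},b]$ and therefore bounded by a constant $M_P$ depending only on $P$ and $\omega$, giving $|G(\gamma)|\leq (b-P^{-d})M_P$ uniformly in $\gamma$. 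Combining this with $|1-\widehat{\phi}_k(\gamma)|\leq 2$ and the estimate $|I(\c B;\gamma)|\ll \min\{1,|\gamma|^{-c-2}\}$ from \eqref{bchrand4} (with $c>0$ as in \eqref{eq:deprez}), the integrand is dominated in absolute value by
\[
2(b-P^{-d})M_P\cdot |I(\c B;\gamma)|,
\]
which is integrable over $\R$ since $c+2>1$.

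The dominated convergence theorem then yields $H-H_k\to 0$, i.e.\ $H_k\to H$ as $k\to\infty$, as required. The only place that requires any care is the integrability of the dominating function, which is immediately supplied by the decay estimate \eqref{bchrand4} already established in the preceding lemma; no new obstacle arises.
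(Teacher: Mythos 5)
Your proof is correct and reaches the conclusion cleanly. The paper establishes the same limit by a manual cutoff argument rather than by citing the dominated convergence theorem: it splits the $\gamma$-integral at $|\gamma|=\log k$, uses the decay bound~\eqref{bchrand4} together with the trivial bound $|\widehat{\phi}_k(\gamma)|=O(1)$ to show the tail $|\gamma|>\log k$ tends to zero, and on the remaining compact range expands $\widehat{\phi}_k(\gamma)=1+O\big((\log k)^2/k^2\big)$ to control the difference. Both arguments rest on exactly the same two ingredients (pointwise convergence $\widehat{\phi}_k\to 1$ and integrability of $I(\mathcal{B};\gamma)$ from~\eqref{bchrand4}), so in substance they are the same proof; the paper's version is a by-hand re-derivation of dominated convergence in this specific setting. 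Your version is slightly cleaner in that it avoids the explicit cutoff, while the paper's quantitative split, if one wished, yields a rate of convergence $H-H_k=O\big((\log k)^2/k^2\big)$ essentially for free --- though no rate is needed here since $P$ is fixed.
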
\begin{proof}By~\eqref{bchrand4} and  $|\widehat{\phi}_k(\gamma )|=O(1) $
one   sees that 
$$\lim_{k\to\infty}
\int_{|\gamma|> \log k } ( \widehat{\phi}_k(\gamma ) -1) I(\c B;\gamma)
\int_{   P^{-d} }^b \omega( P^d  y)  
\mathrm e(-  \gamma   s  y  )
\mathrm d  y
\mathrm d \gamma
=0.$$    In the region $ |\gamma|\leq \log k $ one has 
$$ \widehat{\phi}_k(\gamma ) =\exp(-\pi^2 k^{-2}  \gamma^2)=1+O(k^{-2}  \gamma^2 )
 =1+O\left(\frac{\log^2 k}{k^2}  \r),$$ with an implied constant that is independent of $\gamma$. Hence, $$
\int_{|\gamma|> \log k } ( \widehat{\phi}_k(\gamma ) -1) I(\c B;\gamma)
\int_{   P^{-d} }^b \omega( P^d  y)  
\mathrm e(-  \gamma   s  y  )
\mathrm d  y
\mathrm d \gamma
\ll\frac{\log^2 k}{k^2}  \int_{\R  } |  I(\c B;\gamma)|
 \mathrm d \gamma\ll\frac{\log^2 k}{k^2},$$ which concludes the proof. \end{proof}
For $k\in \mathbb N$ and $\b t \in \c B$ let  $$ F_k (\b t ) : = 
 \int\limits_{\substack{  y \in   [P^{-d}    , b  ] \\ 
 | f  (\b t ) -sy |
\leq 1/\sqrt k } } \omega( P^d y) 
 \phi_k (  f (\b t )-sy )
 \mathrm d   y.$$ 
\begin{lemma}\label{lesds8ghsd6d}We have   $$H=
  \int_{ \c B} \lim_{k\to \infty} F_k (\b t ) \mathrm d t  .$$ \end{lemma}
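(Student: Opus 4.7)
By Lemma~\ref{lem:nbnd} it suffices to show that
$$\lim_{k\to\infty} H_k = \int_{\mathcal{B}} \lim_{k\to\infty} F_k(\mathbf{t}) \, \mathrm{d}\mathbf{t}.$$
The plan is to rewrite $H_k$ as an integral over $\mathcal{B}$ of a function that differs from $F_k$ by a negligible quantity, and then invoke the dominated convergence theorem.

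First, I would unfold $I(\mathcal{B};\gamma) = \int_{\mathcal{B}} \mathrm{e}(\gamma f(\mathbf{t}))\,\mathrm{d}\mathbf{t}$ in the definition of $H_k$ and swap the orders of integration. The Gaussian factor $\widehat{\phi}_k(\gamma) = \exp(-\pi^2 k^{-2}\gamma^2)$ guarantees absolute integrability in $\gamma$, and the other factors are bounded on the bounded domains in $\mathbf{t}$ and $y$, so Fubini applies and gives
$$H_k = \int_{\mathcal{B}} \int_{P^{-d}}^b \omega(P^d y) \left(\int_{\mathbb{R}} \widehat{\phi}_k(\gamma)\, \mathrm{e}\bigl(\gamma (f(\mathbf{t})-sy)\bigr)\, \mathrm{d}\gamma\right) \mathrm{d}y\, \mathrm{d}\mathbf{t}.$$
Since $\phi_k$ is an even Schwartz function, Fourier inversion yields $\int_{\mathbb{R}} \widehat{\phi}_k(\gamma) \mathrm{e}(\gamma u)\,\mathrm{d}\gamma = \phi_k(u)$, so
$$H_k = \int_{\mathcal{B}} \widetilde{F}_k(\mathbf{t})\, \mathrm{d}\mathbf{t}, \qquad \widetilde{F}_k(\mathbf{t}) := \int_{P^{-d}}^b \omega(P^d y)\, \phi_k(f(\mathbf{t})-sy)\, \mathrm{d}y.$$

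Next I would compare $\widetilde{F}_k$ with $F_k$. Their difference is the integral over $y \in [P^{-d},b]$ with $|f(\mathbf{t}) - sy| > 1/\sqrt{k}$, and on this range $\phi_k(u) = \pi^{-1/2} k \exp(-k^2 u^2) \leq \pi^{-1/2} k \exp(-k)$. Since $\omega$ is continuous on the compact interval $[1,bP^d]$ and the range of $y$ has length $\leq b$, we obtain the uniform bound
$$|\widetilde{F}_k(\mathbf{t}) - F_k(\mathbf{t})| \ll_{P,f,\omega} k\, \mathrm{e}^{-k} \longrightarrow 0$$
uniformly in $\mathbf{t}\in\mathcal{B}$. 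Hence, it is enough to prove $\lim_{k\to\infty} \int_{\mathcal{B}} F_k(\mathbf{t})\,\mathrm{d}\mathbf{t} = \int_{\mathcal{B}} \lim_{k\to\infty} F_k(\mathbf{t})\,\mathrm{d}\mathbf{t}$.

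For this, I would apply the dominated convergence theorem. On the shrinking window $|f(\mathbf{t})-sy|\leq 1/\sqrt{k}$ one has the trivial bound $\phi_k(f(\mathbf{t})-sy)\leq \pi^{-1/2}k$ and the window has length at most $2/\sqrt{k}$, so $|F_k(\mathbf{t})| \leq 2\pi^{-1/2}\sqrt{k}\,\sup_{[1,bP^d]}|\omega|$. This bound depends on $k$, so a more delicate estimate is needed: since $\phi_k$ is an approximate identity, $F_k(\mathbf{t})$ is uniformly bounded in $k$ by a constant depending only on $P$ and $\sup|\omega|$, which is the main technical point—one checks that $\int_{\mathbb{R}} \phi_k(u)\,\mathrm{d}u = 1$ so $F_k(\mathbf{t}) \leq \sup_{[1,bP^d]}|\omega|$. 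This gives a $k$-independent integrable majorant on the bounded set $\mathcal{B}$, and dominated convergence concludes the proof. The main obstacle is the technical bookkeeping in producing the $k$-uniform pointwise bound for $F_k$, together with verifying that the limit $\lim_k F_k(\mathbf{t})$ exists (which will be handled in the subsequent lemmas cited in~\eqref{darksun}).
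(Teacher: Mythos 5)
Your proof is correct and follows essentially the same route as the paper: unfold $I(\mathcal B;\gamma)$, apply Fubini (justified by $\int_\R|\widehat\phi_k|<\infty$), use Fourier inversion to obtain $H_k=\int_{\mathcal B}\int_{P^{-d}}^b\omega(P^d y)\phi_k(f(\mathbf t)-sy)\,\mathrm dy\,\mathrm d\mathbf t$, truncate to the window $|f(\mathbf t)-sy|\le 1/\sqrt k$ using the exponential decay of $\phi_k$ off that window, and finish with dominated convergence using the $k$-independent majorant $\sup|\omega|\cdot\int_\R\phi_k=\sup|\omega|$. The only minor inefficiency is the detour through the bound $\sqrt k\sup|\omega|$ before stating the correct $k$-uniform bound, but the final argument matches the paper's.
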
\begin{proof}
By the definition of $I(\c B;\boldsymbol\gamma)$ we can write $$H_k= \int_{\b t \in \c B} 
\int_{  y \in    [P^{-d}   , b  ] } \omega( P^d y ) 
 \int_{ \gamma   \in \R  }  \widehat{\phi}_k(\gamma )  
   \mathrm e ( \gamma  \left\{ f(\b t )- s y\r\} )
\mathrm d \gamma
\mathrm d  y \mathrm d \b t   
,$$ where we swapped the order of integration owing to $\int_\R  | \widehat{\phi}_k(\boldsymbol\gamma )  | \mathrm d \gamma <\infty$.
By Fourier inversion   $$H_k= \int_{\b t \in \c B}  \int_{  y \in    [P^{-d}   , b  ] } \omega( P^d y ) 
 \phi_k   (  f(\b t )-sy ) \mathrm d  y \mathrm d \b t  .$$ For $| y | > 1/\sqrt k$ we see that  \beq{eq:ernm}{ \phi_k( y ) = 
\pi^{-1/2} k\exp(-k^2y^2)\ll k \exp(-k  ) .} 
Hence, by Lemma~\ref{lem:nbnd} we get  $$H=\lim_{k\to \infty}
  \int_{ \c B} F_k (\b t ) \mathrm d t  .$$To conclude the proof it will suffice to    use the dominated convergence theorem.
Its assumptions hold since 
$F_k(\b t )$ is bounded independently of $k$ and $\b t $. Indeed,  
$$\left | F_k(\b t ) \r| \ll \left (\max_{ y \in   [P^{-d}    , b  ] } |\omega( P^dy) |\r)
 \int_{\substack{  \R    } }  
 \phi_k   (  f (\b t )-sy )
 \mathrm d y $$ and   note that a  linear change of variables makes    the integral equal to 
  $ \displaystyle\int_\R  \phi_k(z) \mathrm d z=1$.  
\end{proof}

\begin{lemma}\label{lecorelid6d}We have   $$H= \int\limits_{\substack{ \b t \in \c B \\ s f(\b t) > P^{-d} }}
   \lim_{k\to \infty} F_k (\b t ) \mathrm d t  .$$ \end{lemma}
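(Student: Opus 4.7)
The plan is to deduce this from Lemma~\ref{lesds8ghsd6d} by showing that the integrand $\lim_{k\to\infty} F_k(\b t)$ vanishes on the complementary region $\{\b t \in \c B : sf(\b t) \leq P^{-d}\}$, except on a set of Lebesgue measure zero. Once this is established, restricting the domain of integration in Lemma~\ref{lesds8ghsd6d} immediately gives the claim.

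First I would rewrite the constraint $|f(\b t) - sy| \leq 1/\sqrt k$ appearing in the definition of $F_k(\b t)$: since $s \in \{-1,1\}$ we have $|f(\b t) - sy| = |sf(\b t) - y|$, so the condition is equivalent to $y \in [sf(\b t) - 1/\sqrt k,\, sf(\b t) + 1/\sqrt k]$. This makes the effective support of the integrand in $F_k(\b t)$ transparent: it is a short interval of length $2/\sqrt k$ centred at $sf(\b t)$, intersected with $[P^{-d}, b]$.

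Next I would fix $\b t \in \c B$ with $sf(\b t) < P^{-d}$, set $\eta := P^{-d} - sf(\b t) > 0$, and observe that whenever $k > 1/\eta^2$ the upper endpoint $sf(\b t) + 1/\sqrt k$ lies strictly below $P^{-d}$. Hence the intersection with $[P^{-d}, b]$ is empty and $F_k(\b t) = 0$. Passing to the limit yields $\lim_{k\to\infty} F_k(\b t) = 0$ pointwise on the open region $\{sf(\b t) < P^{-d}\}$. The remaining threshold set $\{\b t \in \c B : sf(\b t) = P^{-d}\}$ is the zero locus of the polynomial $sf - P^{-d}$, which is non-zero because $f$ is a form of positive degree $d$ while $P^{-d} > 0$; this locus therefore has Lebesgue measure zero in $\c B$ and contributes nothing.

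Combining these two observations with Lemma~\ref{lesds8ghsd6d} restricts the integration domain to $\{\b t \in \c B : sf(\b t) > P^{-d}\}$, which is exactly the stated identity. I do not foresee any serious obstacle here: the argument is essentially bookkeeping that tracks the support of $F_k$ as $k \to \infty$, together with the routine verification that the level set at the threshold $sf(\b t) = P^{-d}$ is negligible.
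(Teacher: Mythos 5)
Your proposal is correct and is essentially the same argument as the paper's: on the open set $\{sf(\b t) < P^{-d}\}$ the support condition $y \in [P^{-d},b] \cap [sf(\b t)-1/\sqrt{k},\, sf(\b t)+1/\sqrt{k}]$ becomes vacuous once $k$ exceeds $(P^{-d}-sf(\b t))^{-2}$, so $F_k(\b t)=0$ and the limit vanishes, while the level set $\{sf(\b t)=P^{-d}\}$ has measure zero. You make the paper's implicit quantification explicit and supply a small justification for the measure-zero claim that the paper only asserts, but there is no substantive difference.
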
\begin{proof} The $\b t $ with $ s f(\b t) = P^{-d}$ have measure $0$, hence, can be ignored. It suffices to show that 
if $ s f(\b t) < P^{-d}$ then   $   \lim_{k} F_k (\b t )=0$. Indeed, for such $\b t $ we shall see that, for $k$ big enough, the range of integration in the definition of $F_k$ is empty.
Indeed, if there is some $y$   in  $F_k$ then     
$$y=y-sf(\b t)+s f(\b t) \leq |y-sf(\b t)|+sf(\b t) \leq \frac{1}{\sqrt k}+s f(\b t),$$ hence, as $k\to\infty$, we get   $y< P^{-d}$. This contradicts the definiton of $F_k$, which includes the condition 
  $y\in[P^{-d},b]$. 
\end{proof}

\begin{lemma}\label{lecocoreliada} Let $\b t \in \c B$. If $s f(\b t)>P^{-d}$ then   $   \displaystyle\lim_{k\to \infty} F_k (\b t ) = \omega(sP^d  f(\b t))$. \end{lemma}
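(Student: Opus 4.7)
The plan is to reduce $F_k(\mathbf{t})$ to a standard Gaussian integral via a change of variables, and then exploit the $\mathcal{C}^1$-regularity of $\omega$ together with the fact that $\phi_k$ is an approximate identity on $\mathbb{R}$.

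First I would show that for all sufficiently large $k$ the constraint $y \in [P^{-d}, b]$ becomes redundant. Since $sf(\mathbf{t}) > P^{-d}$ by assumption and $sf(\mathbf{t}) \leq |f(\mathbf{t})| \leq b/2 < b$ by the definition~\eqref{def:minmax} of $b$, the interval $[sf(\mathbf{t}) - 1/\sqrt{k},\, sf(\mathbf{t}) + 1/\sqrt{k}]$ is contained in $(P^{-d}, b)$ once $k$ exceeds a threshold depending on $\mathbf{t}$ and $P$. The condition $|f(\mathbf{t}) - sy| \leq 1/\sqrt{k}$ is equivalent (using $s^2 = 1$) to $|y - sf(\mathbf{t})| \leq 1/\sqrt{k}$, so the range of integration defining $F_k(\mathbf{t})$ is exactly this interval.

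Next I would substitute $u = f(\mathbf{t}) - sy$, so that $y = sf(\mathbf{t}) - su$ and $|dy| = |du|$, and the new range is $u \in [-1/\sqrt{k}, 1/\sqrt{k}]$. This yields
\[
F_k(\mathbf{t}) = \int_{-1/\sqrt{k}}^{1/\sqrt{k}} \omega\bigl(sP^d f(\mathbf{t}) - sP^d u\bigr)\, \phi_k(u)\, du.
\]
Since $\omega$ is $\mathcal{C}^1$ on $[1,\infty)$ and $sP^d f(\mathbf{t}) \geq P^d \cdot P^{-d} = 1$ lies in the interior of the domain (for $k$ large, the argument stays bounded away from the boundary), a first-order Taylor expansion gives
\[
\omega\bigl(sP^d f(\mathbf{t}) - sP^d u\bigr) = \omega\bigl(sP^d f(\mathbf{t})\bigr) + O_{\mathbf{t},P}(|u|)
= \omega\bigl(sP^d f(\mathbf{t})\bigr) + O_{\mathbf{t},P}(1/\sqrt{k})
\]
uniformly in $u$ on the integration range.

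Finally I would rescale $v = ku$ in the Gaussian integral:
\[
\int_{-1/\sqrt{k}}^{1/\sqrt{k}} \phi_k(u)\, du = \pi^{-1/2}\int_{-\sqrt{k}}^{\sqrt{k}} e^{-v^2}\, dv \longrightarrow 1 \quad \text{as } k \to \infty,
\]
and moreover $\int_{-1/\sqrt{k}}^{1/\sqrt{k}} \phi_k(u)\, du$ is bounded by $1$ uniformly in $k$. Combining, the error term contributes $O_{\mathbf{t},P}(1/\sqrt{k}) \to 0$ and the main term tends to $\omega(sP^d f(\mathbf{t}))$, giving the claim. The argument is routine, and the only point requiring mild care is the sign bookkeeping around the substitution $u = f(\mathbf{t}) - sy$; everything else follows from continuity of $\omega$ and the standard approximate-identity property of the Gaussians $\phi_k$.
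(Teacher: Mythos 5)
Your proof is correct and takes essentially the same route as the paper's: first remove the constraint $y\in[P^{-d},b]$ once $k$ is large, then exploit the approximate-identity property of $\phi_k$ to extract $\omega(sP^df(\b t))$. The only cosmetic difference is that you Taylor-expand $\omega$ using its $\mathcal C^1$-regularity to get an explicit $O(1/\sqrt k)$ rate, whereas the paper uses only continuity of $\omega$ together with the normalization $\int_{\R}\phi_k=1$; both yield the same limit.
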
\begin{proof} 
For each $y$ such that $|y-sf(\b t)|\leqslant 1/\sqrt{k}$ we have  
$$
 -\frac{1}{\sqrt k}+P^{-d} 
\leq 
-|y-s f(\b t )|+P^{-d} <
 y-s f(\b t )+s f(\b t )\leq |y-s f(\b t )|+b \leq \frac{1}{\sqrt k}+b,$$ 
thus, as $k\to \infty$, we can ignore the condition $y\in [P^{-d},b]$ in the definition of $F_k$. Hence, $$ \lim_{k\to\infty} F_k (\b t )= \lim_{k\to\infty}
\int_{  | f (\b t ) -sy |\leq 1/\sqrt k   } \omega( P^d y) 
 \phi_k   (  f (\b t )-sy )
 \mathrm d  y.$$
Since $\omega$ is continuous and $\phi_k$ has exponential decay, we infer that $$ \lim_{k\to \infty} F_k (\b t )=  
 \omega( P^d s  f(\b t )) 
\lim_{k\to \infty}
\int_{| f (\b t ) -sy | \leq 1/\sqrt k  }  
 \phi_k   (  f(\b t )-sy )
 \mathrm d  y.$$ It now suffices to prove that the last limit over $k\to \infty $ equals $1$. 
By \eqref{eq:ernm} we can dispose of the condition $| f (\b t ) -s y| \leq 1/\sqrt k$
 as $k\to +\infty$ and we can then use the   fact   $\displaystyle\int_{\R }  
 \phi_k  (   \mu)
 \mathrm d   \mu
=1. 
$ \end{proof}
Bringing together Lemmas~\ref{lesds8ghsd6d}, \ref{lecorelid6d} and \ref{lecocoreliada} yields 
$$H= \int\limits_{\substack{ \b t \in \c B \\ s f(\b t) > P^{-d} }}
  \omega(P^d s f(\b t))\mathrm d \b t  .$$ Injecting this into the main term present in Lemma~\ref{lem:guitar} concludes the proof of
Theorem~\ref{lem:vachms}.

\subsection{Proof of Theorem~\ref{pergolesisalvereginainaminor}}
\label{ss:prf15}
We first prove that if 
$ \rho(a,q) \geq 0$ for all $a,q$ then  
 $\displaystyle\sup_{q\in \N}\sum_{a=1}^q |\rho(a,q)|<\infty$.
 Thus, to complete the proof it suffices to 
 work  under the latter assumption only.
 We write 
$$ \sum_{m\leq x}k(m) =\sum_{a\in \Z/q\Z}  \sum_{\substack{ m\leq x\\m\equiv a \md q }} k(m)
$$ and divide through by $\displaystyle\int_1^x\omega(t) \mathrm dt$. 
Letting 
$x\to\infty$  and using 
the assumption $$
\lim_{x\to +\infty}\frac{E(x,q)}{\displaystyle \int_1^x\omega(t) \mathrm dt}= 0
$$
we deduce that for all $a$ and $g$ we have 
\begin{equation}
\displaystyle\sum_{a\in \Z/q\Z}  \rho(a,q)=\lim_{x\to +\infty}\frac{\displaystyle \sum_{m\leq x}k(m)}{\displaystyle \int_1^x\omega(t) \mathrm dt}.
\label{ind}
\end{equation}
Thus,  if $\rho(a,q)\geq 0 $ for all $a,q$ then  
$\displaystyle\sum_{a=1}^q |\rho(a,q)|$ is uniformly bounded for all $q$.

Next we   prove that the limit in~\eqref{eq:george_cuoffee}
exists. Fixing the value of $f(\b t )$ we see that 
$$T_z^{-n+1}  \sum_{\b t \in (\Z/T_z\Z )^{n} } \rho( s f(\b t  ) ,T_z) 
=  \sum_{\nu=1}^{T_z} \rho(\nu,T_z) \prod_{p\leq z}\frac{\#\left\{\b t \in (\Z/p^{t_p(z)}\Z )^{n}: s f(\b t  ) \equiv \nu \md{p^{t_p(z)}} \right\}}{p^{t_p(z)(n-1)}}
,$$ which, by Lemma~\ref{lem:cramer} equals $$ \sum_{\nu=1}^{T_z} \rho(\nu,T_z)  \left(\mathfrak S(\nu)+ o_{z\to\infty}(1) \right).$$
By   assumption, $\displaystyle\sum_{\nu=1}^{T_z} |\rho(\nu,T_z)|$ is bounded independently of $z$, hence, we obtain 
$$ \sum_{\nu=1}^{T_z} \rho(\nu,T_z)   \mathfrak S(\nu)+o_{z\to\infty}(1) = 
\sum_{q=1}^\infty q^{-n}\sum_{\substack{a\in \Z/q\Z \\ \gcd(a,q)=1 }} S_{a,q}
\sum_{\nu=1}^{T_z} \rho(\nu,T_z)   \mathrm e(-a\nu/q) +o_{z\to\infty}(1),$$ where we used the equation before~\cite[Lemma 5.5]{MR0150129}
to express $\mathfrak S$ via $S_{a,q}$ and the bound~\eqref{eq:kotozimi12} to change order of summation. The new series over $q$ converges absolutely
due to~\eqref{eq:kotozimi12} and $$\left| \sum_{\nu=1}^{T_z} \rho(\nu,T_z)   \mathrm e(-a\nu/q) \right| \leq  \sum_{\nu=1}^{T_z} |\rho(\nu,T_z)  |<\infty.$$
 This proves that the limit defining $\sigma(f)$ exists and is independent of the choice of $t_p(z)$ due to \ref{ind}.

The remaining claims 
in Theorem~\ref{pergolesisalvereginainaminor}
will be deduced from Theorem~\ref{lem:vachms}
with the following choice for $m_p(z)$.
 For any prime $p$,   define    $m_p(z)$ as the largest integer satisfying 
 $p^{m_p(z)} \leq z $ and let $W_z=\displaystyle\prod_{p\leq z} p^{m_p(z)}$. 
 For a fixed integer $k\geq 1$ note that the primes $p$
 for which $m_p(z)=k$ are exactly those   in the interval 
 $(z^{1/(k+1)}, z^{1/k}]$.
Summing over all integers $m$ in that interval gives 
\begin{align*}
\widetilde \epsilon(z)&\leq 
\sum_{1\leq k\leq (\log z)/(\log 2)} 
\sum_{ z^{1/(k+1)}<m\leq z^{1/k}} m^{-k-1}
\leq 
\sum_{1\leq k\leq (\log z)/(\log 2)} \int_{ z^{1/(k+1)} }^\infty  \frac{\mathrm dt}{t^{k+1}} 
\\  & \ll  \sum_{1\leq k\leq (\log z)/(\log 2)} 
 \frac{ z^{- k/(k+1)}}{ k }
 \ll \sum_{  k\leq (\log z)/(\log 2)}  \frac{1}{\sqrt z}
 \ll \frac{\log z}{\sqrt z} ,\end{align*} hence, $\widetilde{\epsilon}(z)=o(1)$.
 Since $m_p(z)=[\log z/\log p]$ we infer that $m_p(z)\to\infty$ as $z\to\infty $ and $p$ is fixed.
Fix any $\epsilon>0$. Then there exists $z$ such that 
$$\left| \displaystyle\sigma(f) -W_z^{-n+1}  \sum_{\b t \in (\Z/W_z\Z )^{n} } \rho( s f(\b t  ) ,W_z)\right|<\epsilon \ \ \textrm{ and } \ \  
|\widetilde{\epsilon}(z)+z^{-c}| <\epsilon.$$ Hence, by Theorem~\ref{lem:vachms} we obtain 
  \begin{align*}
&\Bigg|\frac{1}{P^{n} \displaystyle\int \omega( P^d s  f(\b t ))  \mathrm d \b t } 
 \sum_{\substack{ \b t \in \Z^n\cap P\c B \\  s f(\b t ) > 0 }}  k(s f(\b t) ) 
-\sigma(f)\Bigg|
\\ &\ll \epsilon+  \frac{\|k\|_1 }{P^{d} \displaystyle\int \omega( P^d s  f(\b t ))  \mathrm d \b t }   
(P^{-\delta}+ \epsilon  )+ W_z \frac{E(bP^d, W_z )  }{P^{d} \displaystyle\int \omega( P^d s  f(\b t ))  \mathrm d \b t }  ,
\end{align*} where the integral is over $\b t \in \c B$ with $sf(\b t )>P^{-d}$ and the 
implied constant depends only on $f$ and $k$. 
By the last two assumptions in the theorem the error term becomes 
$$
\ll \epsilon+ 
 P^{-\delta}+ \epsilon    + W_z \frac{E(bP^d, W_z )  }{P^{d} \displaystyle\int_1^{bP^d} \omega(t)\mathrm dt  }
 .
 $$
Using the third assumption shows that for $P$ big enough, we have
$$ 
\Bigg| \frac{1}{P^{n}\displaystyle \int \omega( P^d s  f(\b t ))  \mathrm d \b t } 
 \sum_{\substack{ \b t \in \Z^n\cap P\c B \\  s f(\b t ) > 0 }}  k(s f(\b t) ) 
-\sigma(f)\Bigg| \ll \epsilon,$$ where the implied constant depends at most on $f$ and $k$.
Taking arbitrary small $\epsilon$ concludes the proof. 
 \section{Applications}

\subsection{The proof of Theorem~\ref{thm:chowla}}
\label{ss:claa}
 For   fixed $B>0$ Davenport~\cite[Lemma 6]{davedave} proved that  
$$\sum_{\substack{ 1\leq m \leq x \\ m\equiv a \md q  }} \mu(m) \ll \frac{x}{(\log x)^B}
$$ for all $x\geq 1$, $q\in \N$ and $a \in \Z/q\Z$, where the implied constant is independent of $x,a$ and $q$. Letting  $ \rho( a , q)=0$ and 
$\omega( t)=1$ we see that $E(x,q)\ll x (\log x)^{-B}$. One can now easily verify all   assumptions of 
Theorem~\ref{pergolesisalvereginainaminor}, which suffices for the proof.

\subsection{The proof of Theorem~\ref{thm:chowla2}}
\label{ss:claa2}
Recall   the Wilton-type 
bound 
$$\sup_{\alpha \in \mathbb R} \left | \sum_{m\leq x} \lambda_f(m) \mathrm e(\alpha m)
\right| \ll_f \sqrt{ x} \log x$$
that is proved in~\cite[Theorem 5.3]{iwiw}.
Using additive characters modulo $q$ we obtain 
$$\sup_{a\md q } \left | \sum_{\substack{ m\leq x\\ m\equiv a \md q }}
\lambda_f(m)  
\right| \ll_f \sqrt{ x} \log x.$$
The proof now follows similarly as in \S\ref{ss:claa}

\subsection{The proof of Theorem~\ref{thm:divisorddd}}
\label{s:dvsrcrp}
By~\cite[Theorem 1.1]{MR3352438}  one has for all $x\geq 1$ and $a,q \in \mathbb N$ and fixed $\epsilon>0$ 
that 
$$\sum_{\substack{ m\leq x \\ m\equiv a \md q  }} \tau (m) = \frac{x}{q}\sum_{r\mid q } \frac{c_r(a)}{r}\left(\log   \frac{x}{r^2}+ 2\gamma -1 \r)
+O_\epsilon\left( (x^{1/3}+q^{1/2} ) x^{\epsilon}\right)
,$$ 
where $\gamma$ is   Euler's constant, $c_r(a)$ is Ramanujan's sum and the implied constant depends at most on $\epsilon$.
We   apply Theorem~\ref{pergolesisalvereginainaminor},
hence, we do not need to record the dependence of the error term 
on  $a,q$.
The right hand side is   $\rho(a,q) x \log x  +O(x)$,
where 
 $q\rho(a,q)=  \sum_{r\mid q } c_r(a)/r$ and the implied constant depends at most on $q$ and $a$. 
Dividing through by $x\log x$ and letting $x\to\infty$ 
 shows that $\rho(a,q) \geq 0 $.
Letting   $\omega(t ) = \log t $   one has 
\[E(x,q)=\sum_{\substack{ m\leq x \\ m\equiv a \md q  }} \tau(m) -\rho(a,q) \int_1^x \omega(t) \mathrm d t  =O(x).\]
It is now easy to verify all remaining assumptions of Theorem~\ref{pergolesisalvereginainaminor}. 
 The real density in the main term is 
\[ \int\limits_{\substack{ \b t \in \c B \\ s f(\b t ) > P^{-d} } }\log\left( P^d s f(\b t)  \right)  \mathrm d \b t  =d  (\log P)\mathrm{vol}(\b t \in \c B: s f(\b t)>P^{-d})+O(1) .\]
This is asymptotic to $d  (\log P)\mathrm{vol}(\b t \in \c B: s f(\b t)>0)$ since 
$\mathrm{vol}(\b t \in \c B: | f(\b t)|\ll P^{-d})\ll P^{-d}$ by~\cite[Lemma 1.19]{brobook}.

It remains to study $\sigma(f)$. Since $c_r(a)$ is multiplicative with respect to $r$ we   write   $  \sigma(f)$ as $$\prod_{p=2}^\infty
\lim_{m\to\infty}
p^{-(n-1)m} \sum_{\b t \in (\Z/p^{m}\Z )^{n} }  \rho( s f(\b t  ) ,p^{m} )  
=\prod_{p=2}^\infty
\lim_{m\to\infty}
p^{-nm} \sum_{\b t \in (\Z/p^{m}\Z )^{n} } \sum_{r=0}^{m} p^{-r}  c_{p^r}(s f(\b t  ))
.$$ Using the fact that $c_{p^r}(a)=c_{p^r}(b)$ when $a\equiv b \md {p^r}$ we obtain 
 \begin{equation} \sum_{r=0}^{m} p^{-r(n+1) } \sum_{\b x \in (\Z/p^{r}\Z )^{n} } c_{p^r}(s f(\b x  ))  
=\sum_{r=0}^{m} p^{-r(n+1) }  \sum_{\substack{ a\in \Z/p^r \Z\\p\nmid a  }}
S_{a,p^r}.
\label{eqq}
\end{equation}
 By~\eqref{eq:touselater} we have the following for each $m\geq 1 $, 
$$
 \sum_{ 0\leq r \leq m  }  p^{-rn} \sum_{\substack{  a \in \Z/p^r\Z\\ p\nmid  a  } }S_{ a,p^r} 
=\frac{\#\{\b t \in (\Z/p^{m } )^{n}:  f(\b t ) = 0  \}}{p^{(n-1) m}}=:\Gamma_m
$$from which it immediately follows that for all $m\geq 1 $ one has 
$$p^{-mn} \sum_{\substack{  a \in \Z/p^m\Z\\ p\nmid  a  } }S_{ a,p^m} 
=\Gamma_m-\Gamma_{m-1}.$$
Therefore, (\ref{eqq}) is equal to \begin{align*}
&1+\sum_{r=1}^{m } p^{-r} (\Gamma_r-\Gamma_{r-1})
=\frac{\Gamma_m}{p^{m+1}}+\left(1-\frac{1}{p}\r)\sum_{k=0}^m \frac{\Gamma_k}{p^k}
\\=&
\frac{\Gamma_m}{p^{m+1}}
+\left(1-\frac{1}{p}\r)\sum_{k=0}^m \frac{\#\{\b t \in (\Z/p^{k } )^{n}:  f(\b t ) = 0  \}}{p^{n k}}
\\=&
\frac{\Gamma_m}{p^{m+1}}
+\left(1-\frac{1}{p}\r)\frac{1}{p^{n m}}
\sum_{\b t \in (\Z/p^{m } )^{n}} \sum_{k=0}^m  \mathds 1(p^k\mid f(\b t )   )
.\end{align*}
Recalling the definition of $ \tau_{p^m}(n)$ and taking the limit as $m\to\infty$,
this becomes 
$$\left(1-\frac{1}{p}\r)\lim_{m\to\infty} \frac{1}{p^{n m}}
\sum_{\b t \in (\Z/p^{m } )^{n}} \tau_{p^m}(f(\b t )),$$
which concludes the proof of Theorem~\ref{thm:divisorddd}.

\subsection{A shifted convolution problem in arithmetic progressions}
\label{ss:convoltn}

Let $\chi$ be the non-principal Dirichlet character modulo $4$ so that 
Dirichlet convolution $1\ast \chi(n)$ equals $4$ times the number of representations of $n$ as a sum of two integer squares for any integer $n$. For   $a,q\in \N$ we 
 shall estimate
\beq{eqnabalwmousiki}{ S(x;q,a):=
\sum_{\substack{ m\leq x \\ m\equiv a \md q }} r(m )  r(m+1)}
under the conditions 
\begin{equation}
    \label{eq:assummptns}
4\mid q, \ \ 
 v_p(a)\leq v_p(q)-1 \ \forall p\mid q
.\end{equation} The case $q=1$ can be treated using a variety of methods,
for example, the first proof 
of Estermann~\cite{ester} 
uses Kloosterman sums
while the proofs 
in~\cite{MR1661040} 
and~\cite[Corollary 15.12, part (2)]{iwan} use 
spectral methods. We were unable to locate a
reference that provides an explicit error term for general 
arithmetic progressions and we therefore give a proof here.
\begin{theorem}
\label{thm:thankyouthankyouthankyou} For all $a,q\in \N,x\geq 1 $ 
satisfying $q\leq x^{1/2}$
and~\eqref{eq:assummptns} we have 
$$
S(x;q,a)
=  \pi^2  x\frac{\eta_q(a)\eta_q(a+1)}{q^3}
\mathds 1(a\equiv 0,1 \md 4 )
 \prod_{p\nmid q }\left(1-\frac{1}{p^2}\right)
+O\left(x^{5/6} (\log x)^{20} q^{3/2}\tau(q)^4 \right),$$ where the implied constant is absolute
and for any $b$ integer \begin{equation}
\eta_q(b):=
 \#\left\{\b y \in (\Z/q\Z)^2: 
y_1^2+y_2^2\equiv  b \md q  \right\} .
\label{eta}
\end{equation}
\end{theorem}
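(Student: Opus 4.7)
The plan is to write $r = 4(1\ast\chi)$ with $\chi = \chi_{-4}$, open only $r(m)$, and handle the resulting inner sum over $m$ via the classical asymptotic for $r$ on arithmetic progressions, combined with Dirichlet's hyperbola trick. This gives
$$
S(x;q,a) = 4\sum_{d_1}\chi(d_1)\sum_{\substack{m\leq x\\ m\equiv a\md{q}\\ d_1\mid m}} r(m+1).
$$
Hypothesis~\eqref{eq:assummptns}, namely $v_p(a)<v_p(q)$ for $p\mid q$, reduces the CRT compatibility of $d_1\mid m$ with $m\equiv a\md{q}$ to the clean condition $v_p(d_1)\leq v_p(a)$ at each $p\mid q$, which makes the forthcoming local computation tractable.

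The hyperbola step introduces a parameter $y$: for $d_1\leq y$ I keep the sum as is, whereas for $d_1>y$ I parametrize $m = d_1 e_1$ with $e_1 < x/y$ and use $\chi(d_1) = \chi(m)\chi(e_1)$ (valid when $m$ is odd; even $m$ are absorbed by $\chi(2)=0$) to swap to a sum over $e_1$. In both ranges the inner sum takes the form $\sum_{n\leq X,\,n\equiv b\md{Q}} r(n)$ with $Q = [q,d_1]$ or $Q = [q,e_1]$, for which I use the classical asymptotic
$$
\sum_{\substack{n\leq X\\ n\equiv b\md{Q}}} r(n) = \frac{\pi X \, c(b,Q)}{Q} + O\bigl(\sqrt{X}\,\tau(Q)^{O(1)}(\log X)^{O(1)}\bigr),
$$
which follows from Voronoi summation or elementary hyperbola estimates for $r$.

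The main term assembles multiplicatively over primes. The constant $\pi^2$ appears from $16\,L(1,\chi)^2 = 16(\pi/4)^2$ together with the prefactor $4$ from $r = 4(1\ast\chi)$. The local factors at $p\nmid q$ are computed from the standard Dirichlet series and collapse to the convergent product $\prod_{p\nmid q}(1-p^{-2})$. The local factors at $p\mid q$ are read off from the CRT counts and are exactly the quantities $\eta_q(a)\eta_q(a+1)/q^3$ defined in~\eqref{eta}. The indicator $\mathds 1 (a\equiv 0,1\md{4})$ emerges at $p=2$: since $r(n)$ vanishes on $n\equiv 3\md{4}$, the requirement that both $r(m)$ and $r(m+1)$ be non-zero forces $a\in\{0,1\}\md{4}$.

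The principal obstacle is the error estimate and the bookkeeping of its dependence on $q$. The small-$d_1$ range contributes a Voronoi-type error of order $O(y\sqrt{x}\cdot q^{O(1)}\tau(q)^{O(1)}(\log x)^{O(1)})$ after summing over $d_1\leq y$, while the large-$d_1$ range, via the complementary hyperbola parametrization, contributes $O((x/\sqrt{y})\cdot q^{O(1)}\tau(q)^{O(1)}(\log x)^{O(1)})$. Balancing these two errors yields $y = x^{1/3}$ and overall error $O(x^{5/6}\cdot q^{O(1)}\tau(q)^{O(1)}(\log x)^{O(1)})$, which a careful analysis pins down to the claimed $O(x^{5/6}(\log x)^{20}q^{3/2}\tau(q)^4)$. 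The delicate point is tracking the exact powers of $q$ and $\tau(q)$ arising from the local densities $c(b,Q)$ and from the error in the $r$-asymptotic on arithmetic progressions modulo $[q,d_1]$ or $[q,e_1]$, and checking that the main terms from both ranges of the hyperbola combine to produce the claimed explicit formula.
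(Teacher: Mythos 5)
Your architecture matches the paper's: split by $m\pmod 4$, open $r(m)=4(1*\chi)(m)$ via Dirichlet's hyperbola identity, leave $r(m+1)$ intact, and feed the inner sums into an asymptotic for $\sum_{n\leq X,\,n\equiv b\pmod Q}r(n)$ with explicit $Q$-dependence. The main-term sketch (the product over $p\nmid q$, the local factors $\eta_q(a)\eta_q(a+1)/q^3$, the constant $\pi^2$, the indicator at $p=2$) is also the paper's route.

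The error accounting, however, does not close as written. Your two bounds are mutually incompatible: the small range $O(y\sqrt{x}\cdots)$ presupposes a per-term error of size $O(\sqrt{x}\cdots)$ for each $d_1\leq y$, whereas the large-range bound $O(x/\sqrt{y}\cdots)$ would require a per-term error decaying with the modulus, roughly $O(\sqrt{x/e_1})$, which is not the ``classical'' $O(\sqrt{X}\,\tau(Q)^{O(1)}(\log X)^{O(1)})$ you quote. Using the classical $O(\sqrt{X})$ error consistently, the complementary range gives $\sum_{e_1<x/y}O(\sqrt{x}\cdots)=O(x^{3/2}/y\cdots)$; balancing $y\sqrt{x}=x^{3/2}/y$ forces $y=\sqrt{x}$ and a useless total error $O(x)$. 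No choice of $y$ reaches $x^{5/6}$ from the $O(\sqrt{X})$ error alone. The paper instead invokes Tolev's theorem for the circle problem in a progression, whose error is $O\big(([d,q]^{1/2}+x^{1/3})\gcd(a,[d,q])^{1/2}\tau([d,q])^4(\log x)^4\big)$, and uses the symmetric cutoff $\sqrt{x}$ (not $x^{1/3}$). The power saving $x^{1/3}$ in place of $x^{1/2}$, together with $\gcd(a,[d,q])\leq q$ coming from~\eqref{eq:assummptns} and $q\leq x^{1/2}$, is exactly what produces $\sqrt{x}\cdot x^{1/3}=x^{5/6}$ with the stated powers of $q$ and $\tau(q)$. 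This is a genuine missing ingredient, not a bookkeeping detail.

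You also do not address the secondary term arising from the complementary hyperbola range: alongside $x\mathcal M$ it produces a contribution $\pi\sqrt{x}\cdot E$, with $E=\sum_{e}\chi(e)\,e\,\eta_{[e,q]}(\cdots)/[e,q]^2$, coming from the non-trivial lower endpoint $m>e\sqrt{x}$. Trivially $E$ can be as large as $\tau(q)\sqrt{x}/q$, so $\sqrt{x}\,E$ is comparable to the main term itself. The paper bounds $E\ll q^{3/2}(\log q)\tau(q)(\log x)$ by exploiting the oscillation of $\chi(e)$ through P\'olya--Vinogradov (Lemma~\ref{lem:barkingdog}); without that cancellation the claimed error bound fails, so this step needs to appear explicitly in your argument.
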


As a first step we recall that $r(m)$ vanishes for $m\equiv 3 \md 4$, thus, 
\beq{eqnabadefnn}{ S(x;q,a)= S_0(x;q,a)+ S_1(x;q,a),} where 
$$ S_i(x;q,a):=
\sum_{\substack{ m\leq x, m\equiv i \md 4 \\ m\equiv a \md q  }} r(m )  r(m+1)
.$$ 
Indeed, either $m$ is odd and for $r(m)r(m+1)$ not to vanish, $m\equiv 1 \md{4}$ or $m+1$ is odd and for $r(m)r(m+1)$ not to vanish, $m\equiv 0 \md{4}$.
By Dirichlet's hyperbola trick
when $k$ is odd we have 
$$\frac{1}{4} r(k)=
\sum_{d e= k}\chi(d)=
\sum_{\substack{ d\mid k \\d\leq \sqrt x }} \chi(d)
+\chi(k)
\sum_{\substack{ e\mid k \\e<k/\sqrt x}} \chi(e)
.$$ This shows that 
\beq{eqdiridirifnn}{ S_1(x;q,a)= 4S_1^{-}(x;q,a)+ 4S_1^{+}(x;q,a),}
where 
$$
S_1^{-}(x;q,a):=\sum_{d\leq \sqrt x} \chi(d) 
\sum_{\substack{ m\leq x, m\equiv 1 \md 4 
\\ m\equiv a \md q, d\mid m }} r(m+1) $$ and $$
S_1^{+}(x;q,a):= \sum_{e\leq \sqrt x} \chi(e) 
\sum_{\substack{ m\in (e\sqrt x,x], m\equiv 1 \md 4 
\\ m\equiv a \md q, e\mid m }}   r(m+1) 
.$$  Since $4 \mid q $ by~\eqref{eq:assummptns}
we infer that 
the three congruences in the sums $S_1^-, S_1^+$  are soluble 
if and only if   
 $4\mid a-1$ and $ \gcd(d,q)\mid a.$ Denoting the least common multiple by $[\cdot,\cdot]$ we see that under the aforementioned conditions there exists a unique   
 $t\in \Z/[d,q]\Z$ such that 
\begin{equation}\label{eq:light}
 m\equiv 1 \md 4, m\equiv a \md q, m\equiv 0 \md d
 \iff 
 m\equiv t \md{[d,q]}
 .\end{equation}
Hence,  
$$S_1^-(x;q,a)=\mathds 1(a\equiv 1 \md 4  ) 
\sum_{\substack{ d\leq \sqrt x \\ \gcd(d,q)\mid a }} \chi(d)
\sum_{\substack{ m\leq x   \\ m\equiv t\md{[d,q]}  }} r(m+1 )   
.$$ \begin{lemma}\label{lem:bulgaria}
For all $a,q$ satisfying~\eqref{eq:assummptns}  and all $x\geq 1 $ we have 
$$S_1^-(x;q,a)
=\mathds 1(a\equiv 1 \md 4  ) \pi x \mathcal M+ 
+O\left(q  \tau(q)^4 x^{5/6} (\log x)^{19}\right),
$$ where the implied constant is absolute and 
$$\mathcal M :=\sum_{\substack{ d\leq \sqrt x \\ \gcd(d,q)\mid a }} \chi(d)
\frac{\#\left\{\b y \in (\Z/[d,q]\Z)^2: 
y_1^2+y_2^2\equiv a+1 \md q , 
y_1^2+y_2^2\equiv 1 \md d \right\}}{[d,q]^2}.$$\end{lemma}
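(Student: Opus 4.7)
The plan is to evaluate the inner sum
$$T(d) := \sum_{\substack{m \leq x \\ m \equiv t \md{[d,q]}}} r(m+1)$$
uniformly in $d \leq \sqrt x$ and then sum the resulting expression, weighted by $\chi(d)$, over $d$.

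First, I would use the identity $r(n) = \#\{(y_1, y_2) \in \Z^2 : y_1^2 + y_2^2 = n\}$ to rewrite $T(d)$ as a lattice point count:
$$T(d) = \#\{(y_1, y_2) \in \Z^2 : 2 \leq y_1^2+y_2^2 \leq x+1,\ y_1^2+y_2^2 \equiv t+1 \md{[d,q]}\}.$$
Partitioning by the $\eta_{[d,q]}(t+1)$ residue classes $(v_1, v_2) \in (\Z/[d,q]\Z)^2$ with $v_1^2+v_2^2 \equiv t+1 \md{[d,q]}$ and substituting $y_i = [d,q] u_i + v_i$, each class becomes a $\Z^2$-lattice count in a shifted disc of radius $\sqrt{x+1}/[d,q]$. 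Applying Sierpi\'nski's Gauss-circle bound (with exponent $2/3$) to each class yields
$$T(d) = \pi x \cdot \frac{\eta_{[d,q]}(t+1)}{[d,q]^2} + O\!\left(\eta_{[d,q]}(t+1) \cdot \frac{x^{1/3}}{[d,q]^{2/3}}\right).$$
Since $t \equiv 0 \md d$ and $t \equiv a \md q$, we have $t+1 \equiv 1 \md d$ and $t+1 \equiv a+1 \md q$, so by the Chinese Remainder Theorem $\eta_{[d,q]}(t+1)$ is exactly the quantity counted inside the definition of $\mathcal M$. Multiplying by $\chi(d)$, summing over $d \leq \sqrt x$ with $\gcd(d,q) \mid a$, and reinserting the indicator $\mathds 1(a \equiv 1 \md 4)$ then produces the claimed main term $\pi x \mathcal M$.

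The main obstacle is bounding the error from the Gauss-circle step. Summing $\eta_{[d,q]}(t+1)\, x^{1/3} [d,q]^{-2/3}$ trivially over $d \leq \sqrt x$ using the bound $\eta_M(\nu) \ll M \tau(M)^{O(1)}$ yields only $\ll x\, q^{1/3}\, \tau(q)^{O(1)}\, (\log x)^{O(1)}$, which is too weak by a full power of $x$. The target exponent $\tfrac{5}{6} = \tfrac12+\tfrac13$ suggests splitting the $d$-range at the scale $d \asymp x^{1/3}$ and treating the two pieces by different methods. In the small range $d \leq x^{1/3}$ I would apply the shifted Sierpi\'nski bound together with a Chinese remainder factorisation $\eta_{[d,q]}(t+1) = \eta_{d'}(1)\, \eta_q(a+1)$ with $d' := d/\gcd(d,q)$; this isolates the $q$-dependent prefactor $\eta_q(a+1) \ll q \tau(q)^{O(1)}$ from the $d$-summation and reduces the residual sum to a manageable divisor estimate. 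In the large range $x^{1/3} < d \leq \sqrt x$ I would exchange the order of summation and bound
$$\sum_{2 \leq y_1^2 + y_2^2 \leq x+1}\ \Bigg| \sum_{\substack{d \mid y_1^2 + y_2^2 - 1 \\ x^{1/3} < d \leq \sqrt x \\ \gcd(d,q) \mid a}} \chi(d) \Bigg|$$
via Cauchy--Schwarz and the second-moment estimate $\sum_{n \leq x} \tau(n)^2 \ll x (\log x)^3$, which exploits the constraint $d \mid y_1^2+y_2^2-1$. Balancing the two contributions at $d \asymp x^{1/3}$ produces the stated exponent $x^{5/6}$, while the factor $q \tau(q)^4 (\log x)^{19}$ absorbs the divisor-sum and CRT losses accumulated in both ranges.
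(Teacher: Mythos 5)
The paper's proof of this lemma is a one-step application of Tolev's theorem on the circle problem in arithmetic progressions (\cite[Theorem, page 262]{tolev}), which directly furnishes, for a single modulus $M=[d,q]$, the estimate
$$\sum_{\substack{m\leq x\\ m\equiv t \md{[d,q]}}} r(m+1) = \pi x\,\frac{\eta_{[d,q]}(t+1)}{[d,q]^2} + O\!\left(\bigl([d,q]^{1/2}+x^{1/3}\bigr)\gcd(t+1,[d,q])^{1/2}\tau([d,q])^4(\log x)^4\right).$$
This is then summed over $d\leq\sqrt x$, with $\gcd(t+1,[d,q])\leq q$ thanks to~\eqref{eq:assummptns}, giving the claimed $q\tau(q)^4x^{5/6}(\log x)^{19}$.

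Your approach replaces Tolev's theorem by an elementary per-residue-class lattice count with Sierpi\'nski's bound, and you correctly compute the main term. However, the error bound you obtain per $d$, namely $\eta_{[d,q]}(t+1)\cdot x^{1/3}[d,q]^{-2/3}\asymp [d,q]^{1/3}x^{1/3}$, is substantially weaker than Tolev's $\bigl([d,q]^{1/2}+x^{1/3}\bigr)q^{1/2}\tau([d,q])^4(\log x)^4$: multiplying the Gauss-circle error by $\eta_{[d,q]}\asymp[d,q]$ classes throws away the cancellation that the quadratic-form structure provides. Your proposed repair for the range $x^{1/3}<d\leq\sqrt x$ has a genuine gap: the quantity you wish to bound,
$$\sum_{x^{1/3}<d\leq\sqrt x}\chi(d)\,T(d)=\sum_{n\leq x}r(n+1)\!\!\sum_{\substack{d\mid n,\ x^{1/3}<d\leq\sqrt x\\ \gcd(d,q)\mid a}}\!\!\chi(d),$$
is \emph{not} an error term to be discarded --- it is comparable in size to the corresponding piece of the main term $\pi x\mathcal M$, which is itself of order $x\log x$ (for bounded $q$). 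You must evaluate this contribution and match it against $\pi x\mathcal M$; simply bounding it in absolute value cannot succeed. Concretely, the Cauchy--Schwarz step you describe gives
$$\left(\sum_{n\leq x} r(n+1)^2\right)^{1/2}\!\left(\sum_{n\leq x}\Bigl|\sum_{d\mid n,\ldots}\chi(d)\Bigr|^2\right)^{1/2}\ll x(\log x)^{O(1)},$$
since the second factor contains a diagonal contribution of size $\asymp x$; there is no gain over the trivial bound. The missing ingredient is precisely a circle-problem-in-progressions estimate of Tolev type (or Kloosterman-refinement strength) that is uniform in the residue class and has error $\bigl(M^{1/2}+x^{1/3}\bigr)\gcd(\nu,M)^{1/2}$ rather than $M^{1/3}x^{1/3}$; without such an input, no splitting of the $d$-range around $x^{1/3}$ will reach the exponent $5/6$.
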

\begin{proof}Applying~\cite[Theorem, page 262]{tolev} gives
the error term 
$$
\ll 
 \sum_{\substack{ d\leq \sqrt x   }}  
\left( \left([d,q]^{1/2}+x^{1/3}\right) \gcd(a,[d,q])^{1/2} 
\tau([d,q])^4(\log x)^4 \r) .$$ By~\eqref{eq:assummptns} we 
have $\gcd(a,[d,q])\leq q$. Indeed, if a prime $p\mid a,d$ but $p\nmid q$, since $d\mid m$, we deduce that $p\mid q$ which is a contradiction. Hence, the error term is   $$ 
\ll q^{1/2} \tau(q)^4 (\log x)^4
\sum_{\substack{ d\leq \sqrt x   }}  
  \left((dq)^{1/2}+x^{1/3}\right)  \tau(d)^4   
 \ll q  \tau(q)^4 x^{5/6} (\log x)^{19} .$$
 The main term supplied by ~\cite[Theorem, page 262]{tolev}
equals $$\mathds 1(a\equiv 1 \md 4  ) \pi x 
\sum_{\substack{ d\leq \sqrt x \\ \gcd(d,q)\mid a }} \chi(d)
\frac{\eta_{t+1}([d,q])}{[d,q]^2},$$
 which takes the required shape in light of~\eqref{eq:light} and the fact that the congruence modulo 4 is implied by the one modulo $q$ provided that $a\equiv 1 \md{4}.$
\end{proof} One can similarly prove the estimate \beq{eq:simvivaldi}{
 S_1^+(x;q,a)
=\mathds 1(a\equiv 1 \md 4  ) \pi \left( x \c M  -E \sqrt{x} \r) 
+O\left(q  \tau(q)^4 x^{5/6} (\log x)^{19}\right),
} where the implied constant is absolute and 
$$E:=\sum_{\substack{ e\leq \sqrt x \\ \gcd(e,q)\mid a }} \chi(e) e
\frac{\#\left\{\b y \in (\Z/[e,q]\Z)^2: 
y_1^2+y_2^2\equiv a+1 \md q , 
y_1^2+y_2^2\equiv 1 \md d \right\}}{[e,q]^2}.$$Taking into account the oscillation of $\chi(e)$, we next show that $E$ makes a negligible contribution.
\begin{lemma}\label{lem:barkingdog} 
For all $a,q$ satisfying~\eqref{eq:assummptns}  and all $x\geq 1 $ we have 
$$E\ll q^{3/2} (\log q) \tau(q)  (\log x)  ,$$ where the implied constant is absolute.\end{lemma}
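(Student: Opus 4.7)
We plan to bound $E$ by exploiting cancellation from the character $\chi$ after a multiplicative decomposition of the summand. For each admissible $e$, write $e=uv$ where $u:=\prod_{p\mid q}p^{v_p(e)}$ collects the prime factors of $e$ that also divide $q$, and $\gcd(v,q)=1$. Because of the hypothesis $v_p(a)\leq v_p(q)-1$, the requirement $\gcd(e,q)\mid a$ becomes the single condition $u\mid a$ (in particular $u\leq q$). Using the Chinese remainder theorem, the count $N(e):=\#\{\b y\in(\Z/[e,q]\Z)^2: y_1^2+y_2^2\equiv a+1\bmod q,\ y_1^2+y_2^2\equiv 1\bmod e\}$ factors across primes, and one checks that the local factors at primes coprime to $q$ depend only on $v$, while the rest depend only on $u$. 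This yields a factorisation
\[
e\,\frac{N(e)}{[e,q]^{2}}\;=\;K(u)\,G(v),\qquad G(v):=\prod_{p\mid v}\frac{\eta_{p^{v_p(v)}}(1)}{p^{v_p(v)}},
\]
where $K(u)$ is a function of $u$ alone (together with the fixed parameters $q,a$). Since $4\mid q$ the contribution of any $e$ with $v$ even vanishes, so each $p\mid v$ is an odd prime with $p\nmid q$; the standard formula for the number of representations of a unit as a sum of two squares modulo $p^k$ gives $\eta_{p^k}(1)/p^k=1-\chi(p)/p$, so $G(v)=\prod_{p\mid v}\bigl(1-\chi(p)/p\bigr)$.

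Next we treat the inner sum $\Sigma(Y):=\sum_{v\leq Y,\,\gcd(v,q)=1}\chi(v)G(v)$. Expanding the product via $\prod_{p\mid v}(1-\chi(p)/p)=\sum_{d\mid\mathrm{rad}(v)}\mu(d)\chi(d)/d$ and swapping summations gives
\[
\Sigma(Y)=\sum_{\substack{d\ \text{sqfree}\\ \gcd(d,q)=1}}\frac{\mu(d)\chi(d)^{2}}{d}\sum_{\substack{w\leq Y/d\\ \gcd(w,q)=1}}\chi(w).
\]
We remove the coprimality condition by M\"obius inversion over divisors $k$ of $q$ and exploit that $\chi$ is a character of period $4$ whose partial sums are bounded by $1$; this yields $\bigl|\sum_{w\leq Y/d,\,\gcd(w,q)=1}\chi(w)\bigr|\leq\tau(q)$, and summing trivially over $d\leq Y$ gives $|\Sigma(Y)|\ll\tau(q)\log Y$.

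Finally, we estimate $|K(u)|$ by the standard bound $\eta_{p^{k}}(n)\ll k\,p^{k}$ for representation counts modulo prime powers, applied to each local factor. The constraints $u\mid a$ and $v_p(a)<v_p(q)$ give $\max(v_p(u),v_p(q))=v_p(q)$ at every $p\mid u$, and a direct telescoping of the local factors produces an estimate of the shape $|K(u)|\ll (u/q)\,\tau(q)^{c}$ for an absolute constant $c$. Combining $\Sigma(\sqrt{x}/u)$ with the outer sum over $u\mid\gcd(a,q^{\infty})$, using the trivial bound $\sum_{u\mid a}u\leq\sigma(a)\ll q\log q$, delivers an estimate well within the claimed $q^{3/2}(\log q)\tau(q)(\log x)$.

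The main technical obstacle is the verification that $eN(e)/[e,q]^{2}$ really does split as $K(u)G(v)$, which requires a careful case analysis at each prime dividing $\gcd(u,q)$ depending on whether $v_p(u)\lessgtr v_p(q)$ and on the compatibility of the two congruences modulo $p^{\min(v_p(u),v_p(q))}$. Once this CRT bookkeeping is in place, the character cancellation for $\Sigma(Y)$ and the trivial estimate for the $u$-sum are both routine.
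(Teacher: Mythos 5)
Your proof is correct and follows essentially the same skeleton as the paper's: the decomposition $e = e_0 e_1$ (your $u,v$) separating the prime factors of $e$ that divide $q$ from those coprime to $q$, the CRT factorisation of the local density, the identity $\eta_{p^{k}}(1)/p^{k} = 1-\chi(p)/p$ for odd $p$, and then a M\"obius expansion of the Euler product followed by cancellation in the remaining character sum. The one place where you genuinely diverge is in bounding the completed inner sum $\sum_{w\leq T,\ \gcd(w,q)=1}\chi(w)$: the paper views $\mathds 1(\gcd(w,q)=1)\chi(w)$ as a non-principal character to modulus $q$ and invokes P\'olya--Vinogradov, obtaining $\ll \sqrt{q}\log q$, whereas you strip the coprimality condition by M\"obius inversion over the squarefree divisors of $q$ and use that the partial sums of $\chi\bmod 4$ are $O(1)$, obtaining $\ll \tau(q)$. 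Your route is more elementary and also sharper; carried through, it gives an estimate of roughly $\tau(q)^{O(1)}(\log q)(\log x)$, comfortably inside the claimed $q^{3/2}(\log q)\tau(q)(\log x)$. One remark: the CRT bookkeeping you describe as ``the main technical obstacle'' is in fact immediate. The hypothesis \eqref{eq:assummptns} together with $\gcd(e,q)\mid a$ forces $v_p(u)<v_p(q)$ at every $p\mid u$, hence $u\mid\gcd(a,q)$; consequently $[e,q]=qv$, and the congruence $y_1^2+y_2^2\equiv 1\bmod u$ is already implied by $y_1^2+y_2^2\equiv a+1\bmod q$ because $u\mid a$ and $u\mid q$. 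This yields the closed form $N(e)=\eta_q(a+1)\,\eta_v(1)$, so $K(u)=u\,\eta_q(a+1)/q^2$ and $G(v)=\eta_v(1)/v$ with no case analysis whatsoever --- exactly as the paper does it in a single line.
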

\begin{proof} Write $e=e_0 e_1$ where $e_1$ is coprime to $q$ and each prime factor 
of $e_0$ divides $q$. Then the condition $\gcd(e,q)\mid a$ becomes 
$\gcd(e_0,q)\mid a$ and by~\eqref{eq:assummptns} 
we infer that $ e_0\mid q$.
Thus,  
$$ E=\sum_{\substack{ e_0 \leq \sqrt x \\ e_0\mid \gcd(a,q)  
 }} 
\chi(e_0) e_0
\frac{\#\left\{\b y \in (\Z/q\Z)^2: 
y_1^2+y_2^2\equiv a+1 \md q  \right\}}{q^2}
E_1( \sqrt{x} /e_0),$$ where $$
E_1(T):=
\sum_{\substack{ e_1 \leq T \\ \gcd(e_1,q)=1 }} \chi(e_1)  
\frac{\#\left\{\b y \in (\Z/e_1\Z)^2:  
y_1^2+y_2^2\equiv 1 \md{e_1} \right\}}{e_1}
.$$ By~\cite[pages 27--28]{rogier} we infer that $E_1(T)$ equals 
$$\sum_{\substack{ e_1 \leq  T \\ \gcd(e_1,q)=1 }} \chi(e_1)  \prod_{p\mid e_1} \left (1-\frac{\chi(p)}{p}\right) 
=\sum_{\substack{ k\leq T\\ \gcd(k,q)=1}} \frac{\mu(k)\chi(k)}{k}
\sum_{\substack{ e_1\leq T, k\mid e_1  \\ \gcd(e_1,q)=1}}\chi(e_1)
=\sum_{\substack{ k\leq T\\ \gcd(k,q)=1}} \frac{\mu(k)}{k}
\sum_{\substack{ t\leq T/k  \\ \gcd(t,q)=1}}\chi(t)
.$$ We may view $\mathds 1(\gcd(t,q)=1)\chi(t)$ as a character modulo $q$ (remember here that $4 \mid q$)
in $t$. Hence, the P\'olya--Vinogradov estimate gives the bound $
E_1(T)\ll  \sqrt q (\log q)(\log T)  $. Hence, 
$$E\ll \sqrt q (\log q)(\log x) \sum_{e_0\mid \gcd(a,q)} e_0
\ll q^{3/2} (\log q)(\log x) \tau(q) $$ since $\gcd(a,q)\mid q$.
\end{proof}

\begin{lemma} 
\label{lem:thankuroger} Assume that $q\in \N$ is divisible by $4$, that 
$a\equiv 1 \md 4$  and that $a\in \Z/q\Z$ is such that $v_p(a)<v_p(q)$ for all $p\mid q$.
Then $$\sum_{d_0\mid \gcd(a,q)}\chi(d_0)= \frac{1}{2}
\frac{\eta_q(a)}{q} \prod_{p\mid q} \left(1-\frac{\chi(p)}{p}\right)^{-1},$$
where $\eta_q(a)$ is defined in (\ref{eta}).
\end{lemma}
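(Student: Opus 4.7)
The plan is to establish the identity one prime at a time via the Chinese Remainder Theorem. Writing $q=\prod_{p\mid q}p^{e_p}$ with $e_p=v_p(q)$ and setting $\alpha_p=v_p(a)$ (so the hypothesis reads $\alpha_p<e_p$ for every $p\mid q$), both sides factor across primes:
$$\eta_q(a)=\prod_{p\mid q}\eta_{p^{e_p}}(a), \qquad \sum_{d_0\mid\gcd(a,q)}\chi(d_0)=\prod_{p\mid q}\sum_{k=0}^{\alpha_p}\chi(p)^k.$$
Since $\chi(2)=0$, the product $\prod_{p\mid q}(1-\chi(p)/p)^{-1}$ is supported on odd primes, and the prefactor $\tfrac 12$ on the right-hand side is to be absorbed into the $p=2$ verification. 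It therefore suffices to verify the per-prime identities
$$\eta_{2^{e_2}}(a)=2^{e_2+1}, \qquad \eta_{p^{e_p}}(a)=p^{e_p-1}\bigl(p-\chi(p)\bigr)\sum_{k=0}^{\alpha_p}\chi(p)^k\ \ (p\text{ odd}).$$

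For $p=2$, the conditions $4\mid q$ and $a\equiv 1\md 4$ give $e_2\geq 2$ and $\alpha_2=0$; a direct enumeration mod $4$ yields $\eta_4(a)=8$, and Hensel lifting upgrades this to $\eta_{2^{e_2}}(a)=2^{e_2+1}$. For odd $p$ with $\alpha_p=0$, the classical Hensel lift gives $\eta_{p^{e_p}}(a)=p^{e_p-1}(p-\chi(p))$ directly. For odd $p$ with $\alpha_p\geq 1$, I would split on $\chi(p)$: if $\chi(p)=-1$ then $x^2+y^2$ is anisotropic over $\Z_p$, so every solution forces $v_p(x),v_p(y)\geq\lceil\alpha_p/2\rceil$; the substitution $(x,y)=(p^{\alpha_p/2}x',p^{\alpha_p/2}y')$ reduces the count to the coprime case and gives $p^{e_p-1}(p+1)$ when $\alpha_p$ is even and $0$ when $\alpha_p$ is odd, matching $\sum_{k=0}^{\alpha_p}(-1)^k$. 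If $\chi(p)=1$, pick any $u^2\equiv -1\md{p^{e_p}}$ (Hensel) and use the factorisation $x^2+y^2=(x+uy)(x-uy)$ to bijectively convert the count into $\#\{(s,t)\md{p^{e_p}}:st\equiv a\md{p^{e_p}}\}$, whose value is $(\alpha_p+1)p^{e_p-1}(p-1)$, obtained by partitioning according to the valuation pair $(v_p(s),v_p(t))=(i,\alpha_p-i)$ with $0\leq i\leq\alpha_p$ and carefully counting the lifts of each $(s,t)$ to $\Z/p^{e_p}\Z$.

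The main obstacle is the bookkeeping in the $\chi(p)=1$, $p\mid a$ case: one must correctly track prescribed $p$-adic valuations together with their lifts mod $p^{e_p}$ so as to recover exactly the factor $\alpha_p+1=\sum_{k=0}^{\alpha_p}\chi(p)^k$. A unified alternative which dispenses with any case split is to evaluate $\eta_{p^{e_p}}(a)$ via the Gauss-sum identity
$$\eta_{p^{e_p}}(a)=p^{-e_p}\sum_{t\md{p^{e_p}}}\mathrm e\!\left(-\tfrac{ta}{p^{e_p}}\right)\left(\sum_{y\md{p^{e_p}}}\mathrm e\!\left(\tfrac{ty^2}{p^{e_p}}\right)\right)^{\!2},$$
apply the closed-form quadratic Gauss sum (expressed through $\chi$) to the inner square, and recognise the outer sum as a truncated geometric series in $\chi(p)$; matching factors against the right-hand side of the lemma then concludes the proof.
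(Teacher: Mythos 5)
Your proof is correct and follows the same structure as the paper's: both reduce the claim, via the Chinese Remainder Theorem, to the per-prime identities $\eta_{2^{e_2}}(a)=2^{e_2+1}$ and $\eta_{p^{e_p}}(a)=p^{e_p-1}(p-\chi(p))\sum_{k=0}^{v_p(a)}\chi(p)^k$ for odd $p$, which is exactly what happens in the paper once the divisor sum $\sum_{d_0\mid\gcd(a,q)}\chi(d_0)$ is expanded as an Euler product. The only difference is presentational: the paper cites Heath-Brown (pages 27--28 and Equation (8.4) of \cite{rogier}) for these per-prime counts, whereas you re-derive them from scratch via Hensel lifting, the linear change of variables $(x,y)\mapsto(x+uy,x-uy)$ when $\chi(p)=1$, or Gauss sums.
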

\begin{proof}
 The sum over $d_0$ can be written as 
 $$
 \prod_{\substack{p\equiv 1 \md 4  \\ p\mid \gcd(a,q)}} (1+v_p( a) )
 \prod_{\substack{p\equiv 3 \md 4  \\ p\mid \gcd(a,q)}} \frac{1+(-1)^{v_p(a)}}{2}
. $$ Now we can replace the condition $p\mid \gcd(a,q)$ by $p\mid q$ in both 
products because if $v_p(a)=0$ then the analogous terms in the products equal $1$.
By~\cite[pages 27--28]{rogier} and the fact that $v_p(a)<v_p(q)$
we then obtain 
 $$
 \prod_{\substack{p\equiv 1 \md 4  \\ p\mid q}} \frac{\eta_{p^{v_p(q)}}(a)}
 {p^{v_p(q)}(1-1/p)}
 \prod_{\substack{p\equiv 3 \md 4  \\ p\mid q}} \frac{\eta_{p^{v_p(q)}}(a)}
 {p^{v_p(q)}(1+1/p)}. $$
Lastly, by~\cite[Equation (8.4)]{rogier} and our assumptions
$4\mid q$ and $a\equiv 1 \md 4$ this becomes 
 $$ \frac{1}{2}
 \prod_{\substack{ p\mid q}} \frac{\eta_{p^{v_p(q)}}(a)}
 {p^{v_p(q)}(1-\chi(p)/p)}
 ,$$ which completes the proof.
 \end{proof}

\begin{lemma}\label{lem:pergolesisimfonias}
For all $a,q$ satisfying~\eqref{eq:assummptns}  and all $q\geq x^{1/2},$ we have 
$$ \c M = \frac{\pi}{8}
\frac{\eta_q(a) \eta_q(a+1)}{q}  
\prod_{p\nmid q} \left(1-\frac{1}{p^2}\right)
+O\left( \tau(q) q^{3/2}\frac{(\log x)^2}{\sqrt x}\right)
,$$ where the implied constant is absolute.\end{lemma}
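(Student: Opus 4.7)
The plan is to separate the $q$-smooth and $q$-coprime parts of the summation variable. Write $d=d_0d_1$ with $d_0$ supported on the primes dividing $q$ and $\gcd(d_1,q)=1$. The assumption $\gcd(d,q)\mid a$ together with~\eqref{eq:assummptns} forces $v_p(d)\leq v_p(a)\leq v_p(q)-1$ for each $p\mid q$, so in particular $d_0\mid q$, whence $[d,q]=q d_1$ and the constraint on $d$ reduces to $d_0\mid\gcd(a,q)$.

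Next, the Chinese remainder theorem factors the cardinality appearing in $\c M$. Since $d_0\mid a$ forces $a+1\equiv1\md{d_0}$, the congruence $y_1^2+y_2^2\equiv a+1\md q$ automatically yields $y_1^2+y_2^2\equiv1\md{d_0}$, so the count factorises as $\eta_q(a+1)\eta_{d_1}(1)$. This produces
$$\c M=\frac{\eta_q(a+1)}{q^2}\sum_{d_0\mid\gcd(a,q)}\chi(d_0)\sum_{\substack{d_1\leq\sqrt x/d_0\\\gcd(d_1,q)=1}}\chi(d_1)\frac{\eta_{d_1}(1)}{d_1^2}.$$
For the inner sum I would use $\eta_{d_1}(1)/d_1=\prod_{p\mid d_1}(1-\chi(p)/p)=\sum_{k\mid d_1}\mu(k)\chi(k)/k$ (as in the proof of Lemma~\ref{lem:barkingdog}), swap the order of summation via $d_1=kt$, and use $\chi(k)^2=1$ (valid because $\gcd(k,q)=1$ and $4\mid q$ force $k$ odd) to arrive at
$$\sum_{\substack{k\leq T\\\gcd(k,q)=1}}\frac{\mu(k)}{k^2}\sum_{\substack{t\leq T/k\\\gcd(t,q)=1}}\frac{\chi(t)}{t},\qquad T:=\sqrt x/d_0.$$
Viewing $\mathds 1(\gcd(t,q)=1)\chi(t)$ as a Dirichlet character modulo $q$ (valid because $4\mid q$) and applying P\'olya--Vinogradov with partial summation evaluates the inner sum as $L_q(1,\chi)+O(q^{1/2}k\log q/T)$, where $L_q(1,\chi)=\frac{\pi}{4}\prod_{p\mid q}(1-\chi(p)/p)$ via the classical $L(1,\chi)=\pi/4$.

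Inserting this expansion and completing the sum over $k$ to infinity (whose tail is absorbed into the same error) gives a main term proportional to $L_q(1,\chi)\prod_{p\nmid q}(1-1/p^2)$, the Euler product arising from $\sum_{\gcd(k,q)=1}\mu(k)/k^2=\prod_{p\nmid q}(1-1/p^2)$. Finally, Lemma~\ref{lem:thankuroger} converts $\sum_{d_0\mid\gcd(a,q)}\chi(d_0)$ into $\tfrac12\tfrac{\eta_q(a)}{q}\prod_{p\mid q}(1-\chi(p)/p)^{-1}$; this product telescopes against the factor $\prod_{p\mid q}(1-\chi(p)/p)$ inside $L_q(1,\chi)$, leaving the claimed constant $\pi/8$ and Euler product $\prod_{p\nmid q}(1-1/p^2)$.

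The principal technical obstacle is ensuring the error is uniform in $a$ and $q$. The tail of the inner sum contributes $\ll q^{1/2}(\log q)(\log T)/T$ after the sum over $k$ is carried out; multiplying by $|\chi(d_0)|\leq1$ and summing over $d_0\mid\gcd(a,q)$ pulls in $\sum_{d_0\mid q}d_0\leq q\tau(q)$, and combining this with the prefactor $\eta_q(a+1)/q^2$ (controlled via the standard bound $\eta_q(n)\ll q$ coming from the Euler product over $p\mid q$) must be shown to deliver the advertised error $O(\tau(q)q^{3/2}(\log x)^2/\sqrt x)$. The crucial point is applying P\'olya--Vinogradov at modulus $q$ rather than $4$, which is what prevents an unacceptable loss and keeps the logarithmic factor down to the claimed $(\log x)^2$.
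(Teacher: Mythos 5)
Your proof follows essentially the same route as the paper: split $d=d_0d_1$ into $q$-smooth and $q$-coprime parts, apply CRT and Heath-Brown's formula for $\eta_{d_1}(1)$, M\"obius-invert to expose $\sum_{\gcd(t,q)=1}\chi(t)/t$, complete that sum via P\'olya--Vinogradov (the paper cites Schmidt's Lemma~16, which is the same estimate), and then invoke Lemma~\ref{lem:thankuroger} and the evaluation $L(1,\chi)=\pi/4$ to assemble the constant $\pi/8$. The only small imprecision is your claim $\eta_q(n)\ll q$, which fails in general by factors depending on the $p$-adic valuations of $n$ at primes $p\mid q$; however, the trivial bound $\eta_q(n)\leq q^2$ already suffices to land inside the claimed error $O(\tau(q)q^{3/2}(\log x)^2/\sqrt x)$, so this does not affect the argument.
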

\begin{proof} Writing $d=d_0d_1$ as in the proof of Lemma~\ref{lem:barkingdog} 
we see that $\c M$
equals $$\frac{\#\left\{\b y \in (\Z/q\Z)^2: 
y_1^2+y_2^2\equiv a+1 \md q  \right\}}{q^2}
\sum_{\substack{   d_0\mid \gcd(a,q)  }} 
\chi(d_0) 
\sum_{\substack{ d_1\leq \sqrt x/d_0 \\  \gcd(d_1,q)=1 }} 
\frac{\chi(d_1)}{d_1}
\prod_{p\mid d_1} \left (1-\frac{\chi(p)}{p}\right)  
.$$ We   used~\cite[pages 27--28]{rogier} and the fact that 
 $y_1^2+y_2^2\equiv 1 \md {d_0}$ is implied by 
$y_1^2+y_2^2\equiv a+1 \md q $ due to $d_0\mid \gcd(a,q)$.
Note that the condition $d_0\leq \sqrt x$ is implied by $d_0\mid q$ owing 
to the assumption $q \geqslant x^{1/2}$.  
The sum over $d_1$ equals
$$\sum_{\substack{ k\leq \sqrt x/d_0\\\gcd(k,q)=1 }} \frac{\mu(k)}{k^2}
\sum_{\substack{ t \leq \sqrt x/(kd_0) \\ \gcd(t,q)=1 }} \frac{\chi(t)}{t}
.$$ 
The sum over $t$ can be seen as converging to the value of an $L$-function at $1$ with Dirichlet character $t \mapsto \chi(t) \mathds 1(\gcd(t,q)=1)$. Hence, by \cite[Lemma 16]{Schmidt} for example, it equals 
$$\sum_{\substack{ t =1 \\ \gcd(t,q)=1 }}^\infty 
\frac{\chi(t)}{t}+O\left(q^{1/2}\log(q)\frac{kd_0}{\sqrt x}\right),$$ 
with an absolute implied constant. Thus, the main term becomes 
$$
\begin{aligned}
\sum_{\substack{ k\leq \sqrt x/d_0\\\gcd(k,q)=1 }} \frac{\mu(k)}{k^2}
\left( \sum_{\substack{ t=1 \\ \gcd(t,q)=1 }} ^\infty \frac{\chi(t)}{t}
+O\left(q^{1/2}\log(q)\frac{ k d_0}{\sqrt x} \right) \right)=
&\prod_{p\nmid q} \left(1-\frac{1}{p^2}\right)
\sum_{\substack{ t=1 \\ \gcd(t,q)=1 }} ^\infty \frac{\chi(t)}{t}\\
&
+O\left(q^{1/2}\log(q) \frac{d_0}{\sqrt x}\log x\right),
\end{aligned}$$ where we used the standard bound $L(1,\psi)\ll \log r$ for a 
non-principal character $\psi$ modulo $r$. Finally, we use Lemma~\ref{lem:thankuroger} to deal with the sum over $d_0$ and the calculation $$
\prod_{p\mid q}\left(1-\frac{\chi(p)}{p}\right)^{-1} \sum_{\substack{t=1\\ \gcd(t,q)=1}}^\infty \frac{\chi(t)}{t}=
\sum_{ t=1 }^\infty\frac{\chi(t)}{t}=
\frac{\pi}{4}.$$
\end{proof}
\subsection{The proof of Theorem~\ref{thm:thankyouthankyouthankyou}}
\label{ss:finishingproof}
Injecting Lemma~\ref{lem:barkingdog} into~\eqref{eq:simvivaldi}
gives an asymptotic for $S_1^+$ in terms of $\mathcal{M}$.
By Lemma~\ref{lem:bulgaria} we have a similar asymptotic for  $S_1^-$.
Putting these estimates into~\eqref{eqdiridirifnn}
and then using Lemma~\ref{lem:pergolesisimfonias} to estimate $\mathcal M$
yields an asymptotic for $S_1$.
A similar argument works in an identical manner for $S_0$ and that 
  concludes the proof of Theorem~\ref{thm:thankyouthankyouthankyou}.

\subsection{The proof of Theorem~\ref{thm:analytic_HP}}
\label{ss:proof_thrm_convol}
We shall need the following   lemma.
\begin{lemma}
\label{lem:nairlem} For every prime $p$,
 any integer $m\geq 1$ and any $x\geq 1$ with 
$p^m \leq  x^{1/4}$ we have
$$\sum_{\substack{ |n|\leq x  \\ p^m \mid n }} r(n)r(n+1)\ll 
 \frac{m  }{ p^m }   x,$$ where the implied constant is absolute.
\end{lemma}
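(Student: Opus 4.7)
The plan is to decompose the sum according to the $p$-adic valuation of $n$, reducing the problem to a uniform shifted convolution estimate.

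First, since $r(n) = r(|n|)$, it suffices to control $\sum_{1 \leq n \leq x,\, p^m \mid n} r(n) r(n+1)$; the negative range of $n$ contributes an analogous sum by symmetry. Writing $j := v_p(n) \geq m$ and $n = p^j n'$ with $\gcd(n', p) = 1$, I split
\[
\sum_{\substack{1 \leq n \leq x \\ p^m \mid n}} r(n) r(n+1) = \sum_{j \geq m} \ \sum_{\substack{n' \leq x/p^j \\ \gcd(n', p) = 1}} r(p^j n') r(p^j n' + 1).
\]
By the multiplicativity of $r/4$ and the identity $r(p^j)/4 = \sum_{i=0}^{j} \chi(p)^i$, where $\chi$ is the non-principal character modulo $4$, one verifies the uniform bound $r(p^j) \leq 4(j+1)$ in all three cases $p=2$, $p\equiv 1\pmod 4$, $p\equiv 3\pmod 4$. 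Thus $r(p^j n') \leq (j+1) r(n')$ for $\gcd(n',p)=1$, and the above is at most
\[
\sum_{j \geq m} (j+1) \sum_{n' \leq x/p^j} r(n') r(p^j n' + 1).
\]

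The crucial auxiliary input is the uniform shifted convolution bound
\[
\sum_{n \leq y} r(n) r(cn + 1) \ll y \qquad (c \in \mathbb{N}, \ c \leq y^A),
\]
with an implied constant independent of $c$. I would derive this by applying a Nair--Tenenbaum-type estimate to the multiplicative function $r/4$ and the binary form $F(t) = t(ct + 1)$: the local densities satisfy $\rho_F(q) \leq 2$ at every prime, and the associated Euler product $\prod_{q \leq y}\bigl(1 + \rho_F(q)(1+\chi(q))/q\bigr)$ grows like $(\log y)^2$, precisely cancelling the $(\log y)^{-2}$ savings in Nair's inequality. The assumption $p^m \leq x^{1/4}$ in the lemma ensures $c = p^j$ is polynomially bounded in $y = x/p^j$ throughout the range, so a uniform version of Nair--Tenenbaum (for instance that of Henriot) applies.

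Assembling the pieces,
\[
\sum_{\substack{|n| \leq x \\ p^m \mid n}} r(n) r(n+1) \ll \sum_{j \geq m}(j+1) \cdot \frac{x}{p^j} = \frac{x}{p^m} \sum_{k \geq 0} \frac{m+k+1}{p^k} \ll \frac{m\, x}{p^m},
\]
which is the claimed bound. The main obstacle I anticipate is the third step: extracting uniformity in $c$ from the Nair--Tenenbaum framework, since standard formulations allow the implied constants to depend on the coefficients of the polynomial $F$. One must invoke (or re-derive) an explicitly uniform version of the inequality to guarantee the bound is independent of the dilation parameter $c = p^j$.
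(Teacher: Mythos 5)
Your high-level strategy---factor out the power of $p$ from $n$, use $r(p^j n')\le(j+1)r(n')$, and reduce to a uniform shifted convolution estimate---is reasonable and close in spirit to the paper's, but the heart of the argument, the claimed uniform bound $\sum_{n\le y}r(n)r(cn+1)\ll y$ for $c\le y^A$, contains a genuine gap, and it is not the one you flag at the end. The obstruction is not primarily uniformity of Nair--Tenenbaum in $c$; it is a fixed prime divisor at $2$. The polynomial $F(t)=t(ct+1)$ vanishes identically modulo $2$ for every odd $c$: if $t$ is even then the first factor is even, and if $t$ is odd then $ct+1$ is even. Thus $\rho_F(2)=2$, the Euler factor $1-\rho_F(2)/2$ equals $0$, and Nair's theorem (and its uniform descendants) simply cannot be applied in its standard form. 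Since in this lemma $c=p^j$ is odd whenever $p$ is odd, your key step fails for \emph{every} odd prime $p$, not in a corner case. Your remark that $\rho_F(q)\le 2$ for all primes $q$ is correct but is not the relevant hypothesis: what is needed is $\rho_F(q)<q$ for every prime $q\le\deg F$, and this fails at $q=2$.

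Handling this fixed divisor is precisely the content of the paper's proof. After factoring out $p^m$ once (not by exact valuation, which is a little cleaner) and passing to $r'=r/4$, the paper splits the remaining variable $t$ by its $2$-adic valuation, uses the vanishing of $r'$ on residues $3\pmod 4$ to restrict to $t'\equiv 1\pmod 4$, and substitutes $t'=4s+1$; the resulting polynomial $P(s)=(4s+1)\bigl(p^m 2^k(4s+1)+1\bigr)$ has $P(0)$ odd, and only then does Nair's theorem apply with absolute implied constants (the Euler-product cancellation at that stage is exactly the $(\log y)^2$ versus $(\log y)^{-2}$ balance you describe). The extra factor $k$ picked up from $r'(2^k\cdot)\ll k\,r'(\cdot)$ is absorbed by $\sum_k k/2^k<\infty$. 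To repair your proposal you would need to build this $2$-adic reparametrisation into your third step, which in effect reproves the lemma along the paper's lines.
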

\begin{proof} We have
$$
\sum_{\substack{ |n|\leq x  \\ p^m \mid n }} r(n)r(n+1) \ll \sum_{\substack{ |n|\leq x  \\ p^m \mid n }} r(n(n+1))
$$
where we used the coprimality of $n,1+n$
to bound $r(n)r(1+n)\ll r(n (1+n))$. Note that we must have $n\equiv 0,1 \md{4}$ in order to have $r(n)r(n+1)\neq 0.$
The result would follow  from the work of 
Nair~\cite{nnair}, however, its application is 
prohibited by the fact that  
 the polynomial $n(n+1)$ has $2$ as  a fixed divisor.
 It   suffices  to work with the 
multiplicative function $r'=r/4$. 
Then the sum in the lemma is 
$$ \ll  \sum_{\substack{ |t|\leq x/p^{m}   }} 
r'(p^m t) r'(p^m t+1)
\leq (m+1)
\sum_{\substack{ |t|\leq x/p^{m}   }} 
r'(t (p^m t+1))
,$$ since  $r'(ab)\leq \tau(a) r'(b)$ holds 
for all integers $a,b$.
Using the bound $r'(s)\leq \tau(s) 
\ll s^{1/10}$ we see that 
the  contribution of the terms 
for which $2^k \mid t$ for some $k$ satisfying 
$2^k>x^{1/4}$ is $$\ll  (m+1)
\sum_{\substack{ |t|\leq x/p^{m} \\ 2^ k \mid t   }} 
x^{1/5} \ll m x^{1/5} \left(\frac{x}{p^{m}2^{k}}+1\right)
\ll m  x^{19/20} p^{-m} + m x^{1/5}.
$$ The assumption 
$p^m\leq x^{1/4}$ implies that $x^{1/5}\leq xp^{-m}$,
hence, the bound is satisfactory.

When $k\geq 1$ and $2^k \leq x^{1/4}$
  we can write, in the case  $t=2^kt'$ with $t'$ odd
to obtain   
$$ \ll m \sum_{1\leq k \leq  ( \log x)/(4\log 2)} k
\sum_{\substack{ |t'|\leq x/( p^{m} 2^{k} ) \\ 2\nmid t' }} 
r'(  t' (p^m 2^k t'+1)) $$
where we used $r'(2^k s) \ll k r'(s)$. 
If $t'\equiv 3 \md 4$ then $r'(t')=0$, hence, 
we may assume that $t'\equiv 1 \md 4$.
Then we can write $t'=4s+1$, thus, we get the bound 
 $$ \ll m \sum_{0\leq k \leq  ( \log x)/(4\log 2)} k
\sum_{\substack{ |s'|\leq 4x/( p^{m} 2^{k} )   }} 
r'( P(s) ) ,$$
where $P(s)= (4s+1)  (p^m 2^k (4s+1) +1)$. Since $P(0)$ is odd,
we can apply Nair's result~\cite[Theorem, page 259]{nnair} with $\delta=1$. Indeed, 
we have $\| P\|  \ll p^m 2^k\ll (x p^{-m} 2^{-k})$
because both $2^k$ and $p^m$ are at most $  x^{1/4}$.
We obtain 
$$\sum_{\substack{ |s'|\leq 4x/( p^{m} 2^{k} )   }} 
r'( P(s) ) \ll \frac{x}{ p^{m} 2^{k} } 
\prod_{2<p \leq x} \bigg (1-\frac{2}{p}\bigg) 
\exp\bigg(\sum_{p\leq x} \frac{2 r'(p)}{p}\bigg) \ll \frac{x}{ p^{m} 2^{k} } $$ with an absolute implied constant. 
Thus, the overall contribution is 
 $$ \ll x 
 \frac{m }{ p^{m}  }
 \sum_{0\leq k \leq  ( \log x)/(4\log 2)}  
\frac{k}{   2^{k} } ,$$
which is sufficient. The contribution of the cases with $k=0$ 
can be dealt with in an analogous way by making substitutions in 
the term $r'(n+1)$.
  \end{proof}
The proof
of Theorem~\ref{thm:analytic_HP}
is an application of
Theorem~\ref{lem:vachms} with 
$$m_p(z)=1+\left [\frac{\log z}{\log p}\right],\quad 
k(m)=r(m)r(m+1) \mathds 1(p\leq z \Rightarrow v_p(m)<m_p(z)),$$
together with $s=1$,
$\omega(t)=\pi^2$ and 
$$
\rho(a,q)=\frac{\eta_q(a)\eta_q(a+1)}{q^3}
\mathds 1(a\equiv 0,1 \md 4 )
 \prod_{p\nmid q }\left(1-\frac{1}{p^2}\right)
$$
in~\eqref{eq:basicproperty}.
We have $W_z\leq \mathrm e^{3z}$ since 
$$\log 
W_z=\sum_{p\leq z} \left(1+
\left [\frac{\log z}{\log p}\right ] \right) \leq 3z
 $$ for all large enough $z$ by the Prime Number Theorem.
Hence, by Theorem~\ref{thm:thankyouthankyouthankyou} we have 
$$\frac{E(bP^d, W_z ) W_z  }{P^{d}   }
\ll \frac{(\log P)^{19} W_z^{5/2}\tau(W_z)^4  }
{P^{d/6} } \ll  \frac{(\log P)^{19} W_z^3 }
{P^{d/6} } \ll 
 \frac{(\log P)^{19}} { P^{d/12} }
$$ as long as we assume $z=z(P) := \frac{d}{108} \log P$ and using the $q=1$ case.
Thus, the error term in Theorem~\ref{lem:vachms} is 
$$\ll P^{-\delta}+\widetilde \epsilon(z)+z^{-c} +  \frac{(\log P)^{19}} { P^{d/12} } 
\to 0  \textrm{ as }
 P\to +\infty $$ since one can show that $\widetilde \epsilon(z)\to 0$
in a similar manner as in \S \ref{ss:prf15}.
For the main term we note that 
$$\lim_{P\to\infty}
\int\limits_{\substack{ \b t \in \c B: s f(\b t ) > P^{-d}  } }  \omega( P^d s  f(\b t ))  \mathrm d \b t = \pi^2 \mathrm{vol}\left(\{ \b t \in \c B \, : \, f(\b t)> 0\}\right)
.$$ 
Since $$ \prod_{p\nmid T_z }\left(1-\frac{1}{p^2}\right)=
\prod_{p>z}\left(1-\frac{1}{p^2}\right)=1+O\left(\frac{1}{z}\right),
$$
we see that 
$$
\sum_{\b t \in (\Z/T_z\Z )^{n} } \rho(  f(\b t  ) ,T_z) =
\frac{1+O(1/z)}{{T_z}^3}
\sum_{\substack{\b t \in (\Z/T_z\Z )^{n}\\f(\b t )\equiv 0,1 \md 4} } 
\eta_{T_z}(f(\b t ))\eta_{T_z}(f(\b t )+1)
. $$ The condition $f(\b t )\equiv 0,1 \md 4$ is implied by the fact that $4$ divides $T_z$ and the presence of the terms 
$\eta_{T_z}(f(\b t ))\eta_{T_z}(f(\b t )+1)$.
Hence, 
\begin{align*}
&\lim_{z\to \infty}\frac{ \#\{(\b t,\b x,\b y) \in (\Z/T_z\Z )^{n+4}:
x_1^2+x_2^2=f(\b t), y_1^2+y_2^2=1+f(\b t ), v_p(f(\b t ))<m_p(z)\,\,\forall p\leq z\}}
{T_z^{n+2}}
\\ =&\lim_{z\to \infty}
T_z^{-n+1}
\sum_{\substack{ \b t \in (\Z/T_z\Z )^{n} \\ v_p(f(\b t ))<m_p(z) \,\, \forall p\leq z}} 
\rho(  f(\b t  ) ,T_z),
\end{align*} which can be shown to exist as in \S \ref{ss:prf15}
by using the fact that $\rho(f(\b t) ,T_z)\geq 0$.
The condition $v_p(f(\b t ))<m_p(z)$ can be ignored since the number of 
$\b t \in (\Z/p^{m_p(z)}\Z )^{n} $ with $f( \b t )=0$ equals 
$p^{m_p(z)(n-1)}(1+O(p^{-1-c}))$ for some $c>0$ by~\eqref{eq:kotozimi12} and~\eqref{eq:touselater}
and the number of $\b x ,\b y \in \left(\Z/p^{m_p(z)\Z}\right)^4$
with $x_1^2+x_2^2=0, y_1^2+y_2^2=1$ is $O\left(m_p(z)p^{2m_p(z)}\right)$ by~\cite[pages 27--28]{rogier}.
Hence, \begin{align*}
& \prod_{p\leq z}\frac{ \#\{(\b t,\b x,\b y) \in (\Z/p^{m_p(z)}\Z )^{n+4}:
x_1^2+x_2^2=f(\b t), y_1^2+y_2^2=1+f(\b t ), v_p(f(\b t ))<m_p(z)\forall p\leq z\}}
{p^{(n+2)m_p(z)}}
\\ =& V(z) \prod_{p\leq z} \frac{ \#\{(\b t,\b x,\b y) \in (\Z/p^{m_p(z)}\Z )^{n+4}:
x_1^2+x_2^2=f(\b t), y_1^2+y_2^2=1+f(\b t )\}}
{p^{(n+2)m_p(z)}},
\end{align*} 
where $V(z)=\displaystyle\prod_{p\leq z} \left(1+O\left(m_p(z) p^{-m_p(z)}\right)\right)$. 
We   have   $\lim_{z\to \infty } V(z)=1$
since $\log V(z)$ is 
$$\ll \sum_{p\leq z}\frac{m_p(z)}{p^{m_p(z)}}\leq (\log z)\sum_{p\leq z} 
p^{-1-[\log z/\log p]},$$ which was shown to converge to $0$ in \S \ref{ss:prf15}.

We have so far shown that 
$$\lim_{P\to\infty} P^{-n}\sum_{\substack{ \b x \in \Z^n\cap P\c B \\ 
v_p(f(\b x )) <m_p(z) \,\, \forall p\leq z}} 
r(f(\b x )) r(1+f(\b x ) )= \pi^2\mathrm{vol}\left(\{ \b t \in \c B \, : \, f(\b t)> 0\}\right)
\prod_{ p } \sigma_p(f),$$ where 
$ \sigma_p(f)$ are as in Theorem~\ref{thm:analytic_HP} and
it  remains to get  rid of    the condition $v_p(f(\b x )) <m_p(z) $.

 Letting $\c A$ be the set of integers $\nu$ for which there exists $p\leq z$
 with $p^{m_p(z)}\mid \nu$ we have 
   $$   \sum_{\substack{ \b x \in \Z^n\cap P\c B \\ 
f(\b x ) \in \c A }}  r(f(\b x )) r(1+f(\b x ) )\ll 
\sum_{\substack{|\nu|\ll P^d  \\ \nu \in \c A }} r(\nu)r(1+\nu)
   \sum_{\substack{ \b x \in \Z^n\cap P\c B \\ 
f(\b x ) =\nu }}  1.
,  $$
 By~\cite[Lem.5.5]{MR0150129} we can 
bound this by 
$$\ll
P^{n-d} 
\sum_{\substack{|\nu|\ll P^d  \\ \nu \in \c A }} r(\nu) r(1+\nu)
\mathfrak S(  \nu) J(  \nu P^{-d}) 
\ll P^{n-d}
\sum_{\substack{|\nu|\ll P^d  \\ \nu \in \c A }} r(\nu) r(1+\nu)
$$ since the estimates $\mathfrak S(  \nu), J( \nu P^{-d}) =O(1)$ hold 
uniformly in $\nu$ by~\eqref{son234} and~\eqref{bchrand4}.
Since $p^{m_p(z)} \leq z \leq P^{d/4}$
we can employ 
  Lemma~\ref{lem:nairlem}
to  obtain 
$$ 
\sum_{\substack{|\nu|\ll P^d  \\ \nu \in \c A }} r(\nu)r((1+\nu)
\ll P^d 
\sum_{p\leq z} \frac{m_p(z)}{p^{m_p(z)}} 
\leq P^d \frac{\log z}{\log 2}\widetilde \epsilon(z) 
.$$ By the estimate 
$\widetilde \epsilon(z) \ll 
(\log z)/\sqrt z$ proved in \S \ref{ss:prf15}
 we get $\ll P^d (\log z)^2/\sqrt z$, 
 which concludes the proof.
 
\subsection{The proof of Theorem~\ref{pieropanwn}}
\label{ss:pieropnapl}
We shall use Theorem~\ref{pergolesisalvereginainaminor} with $k$ being the indicator function of    
elements  of $\c A$.
By our assumption for $q=1$ we see that   $\displaystyle\int_1^T\omega(t)\mathrm dt$ is asymptotic to $\#\{ \c A\cap [1,T]\}$. By assumption, 
$\c A$ is non-empty, hence $\rho(1,1)\displaystyle \int_1^T\omega(t)\mathrm dt$ is non-zero for all large $T$. This verified the second assumption of Theorem~\ref{pergolesisalvereginainaminor}. Note that $\rho(r,q)$ is non-negative, it being the limit of non-negative counting functions.
The remaining assumptions are easy to verify.

\subsection{The proof of Theorem~\ref{thm:hyperelipt}} 
\label{s:pwerdd}
We apply Theorem~\ref{pergolesisalvereginainaminor} with $s=1$ and $\c A_k$ being the set of 
$k$-th powers of positive integers. For all   $r,q$ and $x$, one has  
$$ \sum_{\substack{ 1\leq m \leq x \\ m\equiv r \md q  } }\mathds 1_{\c A_k}(m)  =
 \sum_{\substack{ 1\leq t \leq x^{1/k} \\ t^k \equiv r \md q  } } 1=
 \sum_{\substack{ y \in \mathbb{Z}/q\mathbb{Z}  \\ y^k \equiv r \md q  } }  \left(\frac{x^{1/k} }{q}+O(1) \r)
=\rho(r,q) \int_1 ^ x \frac{\mathrm d t }{k t^{1-1/k} } + O(1) ,$$
where  $q\rho(r,q)=   \#\{y \md q : y^k =r \} $ and the implied constant is independent of $x$. Recall that we are applying Theorem~\ref{pergolesisalvereginainaminor} and hence only need to work with fixed modulus $q$ and don't need to record the dependence on $q$ in the error terms. 
Thus, letting $\omega(t) = 1/(k t^{1-1/k}) $ we find that $$ \frac{\#\{a\in \c A_k: a\leq x, a\equiv r \md q \}}{\displaystyle\int_1^x \omega(t)\mathrm d t }-
\rho(r,q) \ll \frac{1}{\displaystyle\int_1^x \omega(t)\mathrm d t } \to 0.$$ To verify the remaining assumption of Theorem~\ref{pergolesisalvereginainaminor}, 
we use the assumption that $f$ assumes at least one strictly positive  value in $\c B$ so that for all large $P$ we have 
\beq{leonardoleosalveregina}{P^d \int\limits_{\substack{ \b t \in \c B\\  f(\b t ) > P^{-d} } } \frac{  \mathrm d \b t }{k (P^d    f(\b t )) ^{1-1/k} }
= \frac{  P^{d/k}  }{k  }\int\limits_{\substack{ \b t \in \c B\\  f(\b t ) > P^{-d} } } \frac{  \mathrm d \b t }{     f(\b t ) ^{1-1/k} }
 \gg P^{d/k}  \gg   \int_1^{bP^d}  \frac{\mathrm dt}{ t^{1-1/k} }
.} 
Invoking   Theorem~\ref{pergolesisalvereginainaminor} allows us to conclude the proof.

\subsection{The proof of Theorem~\ref{thm:mful}}
\label{s:fulcrp}
A positive integer $k$ is $m$-full equivalently when it has the shape 
$k= k_1^m k_2^{m+1}\cdots k_{m}^{2m-1} $ where $k_2\cdots k_m$ is square-free. Thus, for any $x,r$ and $q$, we have 
$$ \#\{ k \in \N \cap[1,x]  : k \textrm{ is } m\textrm{-full }, k\equiv r \md q  \} = \sum_{\substack{ k_1^m k_2^{m+1}\cdots k_{m}^{2m-1} \leq x \\ 
k_1^m k_2^{m+1}\cdots k_{m}^{2m-1}  \equiv r \md q  } }  \mu(k_2\cdots k_m)^2.$$
Fixing the values of $k_2,\ldots, k_m$, this becomes 
$$\sum_{\substack{ k_2,\ldots, k_m \in \N 
\\k_2^{m+1}\cdots k_{m}^{2m-1} \leq x  } } 
 \mu(k_2\cdots k_m)^2
\Osum_{\substack{ y\md q}} 
\#\left\{k_1\leq (xk_2^{-m-1}\cdots k_{m}^{-2m+1} )^{1/m}: k_1 \equiv y \md q \right\},
$$ where       $\Osum$ is subject to $ y^m k_2^{m+1}\cdots k_{m}^{2m-1}  \equiv r \md q$.
We obtain $$\sum_{\substack{ k_2,\ldots, k_m \in \N \\k_2^{m+1}\cdots k_{m}^{2m-1} \leq x  } }  \mu(k_2\cdots k_m)^2
\sum_{\substack{ y\md q}} \left( \frac{(xk_2^{-m-1}\cdots k_{m}^{-2m+1} )^{1/m}}{q} +O(1) \right)
.$$ For any $\frac{1}{m(m+1)}>\epsilon>0$ the error term contributes $$\ll \#\left\{ k_2,\ldots, k_m \in \N :(k_2\cdots k_{m})^{m+1} \leq x  \right\}
\leq \sum_{a\leq x^{1/(m+1)}}\tau_{m-1}(a)\ll_{\varepsilon} x^{\frac{1}{m+1}+\epsilon }=o(x^{1/m})
$$ where $\tau_{m-1}(a)$ is the number of ways of writing $a$ as the product of $m-1$ positive integers
and we used the standard bound $\tau_{m-1}(a)\ll_{\varepsilon} a^\epsilon$. Recall that we are applying Theorem~\ref{pergolesisalvereginainaminor} and hence only need to work with fixed modulus $q$ and don't need to record the dependence on $q$ in the error terms. The main term equals 
$$\frac{x^{1/m}}{q}
\sum_{\substack{ k_2\ldots, k_m \in \N 
\\
k_2^{m+1}\cdots k_{m}^{2m-1} \leq x  } } 
 \mu(k_2\cdots k_m)^2
 (k_2^{-m-1}\cdots k_{m}^{-2m+1} )^{1/m} \Osum_{\substack{ y\md q}} 1
 .$$  By the trivial bound $\displaystyle\Osum_{\substack{ y\md q}}1\leq q$ we see that the sum is 
 convergent as $x\to \infty$, hence we have obtained
$$ \#\{ k \in \N \cap[1,x]  : k \textrm{ is } m\textrm{-full }, k\equiv r \md q  \} = \rho(r,q)x^{1/m} +o(x^{1/m} ),$$
where $$\rho(r,q)=\frac{1}{q}\sum_{\substack{ k_2,\ldots, k_m \in \N   } } \frac{ \mu(k_2\cdots k_m)^2}{
 k_2^{1+1/m} k_3^{1+2/m} \cdots k_{m}^{ 2-1/m }  }\#\left\{ y\in \Z/q\Z : y^m k_2^{m+1}\cdots k_{m}^{2m-1} = r  \right\}.$$
Letting  $\omega(t) = \frac{1}{\displaystyle m t^{1/m-1}} $ we can then verify the remaining assumption of Theorem~\ref{pergolesisalvereginainaminor}
as in~\eqref{leonardoleosalveregina}.

 \subsection{The proof of Theorem~\ref{thm:nonalgebraic}}
 \label{ss:proofnonalg}
Let $k$ be the indicator function of the integers that are an integer power of $2$.
Then for all $q\in \N $ and $r\in [1,q]$ we have 
$$\sum_{\substack{ m\leq x\\m\equiv r \md q  }} k(m)
=\#\left\{0\leq t \leq \frac{\log x}{\log 2} :  2^t \equiv  r \md q  \right\}
.$$  Assume that $v_2(r)< v_2(q)$. Then the congruence is equivalent to 
$2^{t-v_2(r) } \equiv  r2^{-v_2(r)} \md {q 2^{-v_2(r)}}  $. If there exists $t>v_2(r)$ with such a property then 
since both $2^{t-v_2(r) }, q 2^{-v_2(r)}$ are even, one sees that $r2^{-v_2(r)}$ is even, which is a contradiction.
Hence, the sum is $$O(v_2(q) )+ 
\#\left\{v_2(q)\leq t \leq \frac{\log x}{\log 2} :  2^{t-v_2(q)} \equiv  r2^{-v_2(q)} \md {q2^{-v_2(q)}}  \right\}\mathds 1 (v_2(r) \geq v_2(q)) .$$
We can now define   $$\langle 2\rangle =\left\{2^j\md {q2^{-v_2(q)}}: j\in \N\right\}.$$ Then we must have   $r2^{-v_2(q)}\in \langle 2\rangle$, 
so we can write $r2^{-v_2(q)}\equiv 2^\alpha \md{q2^{-v_2(q)}}$ for some integer~$\alpha$.
Then the cardinality becomes  $$
O(v_2(q) )+ 
 \#\left\{v_2(q)\leq t \leq \frac{\log x}{\log 2} :  2^{t-v_2(q)-\alpha} \equiv 1 \md {q2^{-v_2(q)}}  \right\}\mathds 1 (v_2(r) \geq v_2(q),r2^{-v_2(q)}\in \langle 2\rangle).$$ 
Denoting the order of $2\md{q2^{-v_2(q)}}$ by $g(q)$ this then becomes 
$$ \frac{\mathds 1 (v_2(r) \geq v_2(q),r2^{-v_2(q)}\in \langle 2\rangle) }{g(q)} \frac{\log x}{\log 2} +O(v_2(q)).$$ Note that $v_2(q)\leq (\log q)/(\log 2)$. 
Therefore, if we let $$ \omega(t) =\frac{1}{t \log 2 }, \quad  \rho(r,q)= \frac{\mathds 1 (v_2(r) \geq v_2(q),r2^{-v_2(q)}\in \langle 2\rangle)}{g(q)} $$
we have shown that 
 $$ \sum_{\substack{ 1\leq m \leq x \\ m\equiv r \md q } } k(m)  = \rho(r,q) \int_1^{x} \omega(t) \mathrm d t +O(\log q ),$$
with an absolute implied constant. To use Theorem~\ref{pergolesisalvereginainaminor} we must verify the remaining assumption regarding $\int 
\omega$.  We will     show something more, namely,   \beq{eq:leonardoleomotet}{ \lim_{P\to \infty}
\frac{\c J(P)}{\log P}  =  d \sigma_\infty,} where $$ \c J(P):=P^d\int\limits_{\substack{ \b t \in \c B\\   F(\b t ) > P^{-d} }  } \omega(P^d f(\b t ) )\mathrm d \b t.$$
By~\eqref{darksun}  we have  
$$\c J(P)=\frac{1}{\log 2} 
\int_{ \gamma   \in \R } I(\c B,\gamma)
\int_{    P^{-d}    }^b  \frac{\mathrm e(-     \gamma    \mu     )}{\mu} \mathrm d   \mu \mathrm d \gamma
.$$ By the trivial bound $|\mathrm e(-   \gamma    \mu     ) |\leq  1 $
we deduce for  $\gamma\in \R$ and all $\lambda \in (0,b)$ that 
 $$ \int_{    \lambda }^b  \frac{ \mathrm e(-   \gamma    \mu     ) }{\mu} \mathrm d   \mu  \ll \log \frac{b}{\lambda } $$ with an absolute implied constant.
Hence, by~\eqref{bchrand4}, the contribution of $|\gamma|> \sqrt{\log P} $ towards~$\c J(P)$ is 
$$ \ll (\log P) \int_ {\gamma> \sqrt{\log P } } |I(\c B;\gamma)|   \mathrm d \gamma
=o(\log P).$$  
Similarly, the contribution of $ \mu> 1/\log P $ is 
$$\ll \int_{ |\gamma| \leq\sqrt{\log P }  } |I(\c B;\gamma)|
\int_{   1/\log P }^b  \frac{ \mathrm d   \mu }{\mu} 
\mathrm d \gamma \ll \log \log P .$$ Thus, 
  $$\c J(P)= \frac{1}{ \log 2 }
\int_{ |\gamma| \leq \sqrt{\log P }  } I(\c B;\gamma)
\int_{P^{-d} } ^ {  1/\log P  }   \frac{ \mathrm e(-   \gamma    \mu     ) }{\mu} \mathrm d   \mu 
\mathrm d \gamma+o(\log P).$$ In this range  we have $|\gamma \mu | \leq 1/\sqrt{\log P }$, hence, 
$$ \mathrm e(-   \gamma    \mu  )=1+O\left(1/\sqrt{\log P }\right) 
.$$ Substituting in the last expression for $\mathcal J$ leads us to  $$ \c J(P)=
\frac{1}{ \log 2 } \int_{ |\gamma| \leq \sqrt{\log P }  } I(\c B;\gamma)
\int_{P^{-d} } ^ {  1/\log P  }  \frac{   \mathrm d   \mu  }{\mu} 
\mathrm d \gamma+ O(E),$$ where $E$ is given by  $$\frac{1}{\sqrt{\log P } }
 \int_{ |\gamma| \leq \sqrt{\log P }  }| I(\c B;\gamma)| \int_{P^{-d} } ^ {  1 }  \frac{  \mathrm d   \mu    }{\mu} 
\mathrm d \gamma   
\ll \frac{1}{\sqrt{\log P } }
\int_{P^{-d}}^1  \frac{  \mathrm d   \mu    }{\mu}
\ll\sqrt{\log P }
 =o(\log P).$$The   main  term is 
\begin{align*}
&\frac{1}{ \log 2 }
\int_{ |\gamma| \leq \sqrt{\log P }  } I(\c B;\gamma)
(-\log \log P +d \log P) 
\mathrm d \gamma
\\
=& O(\log \log P ) 
+\frac{d \log P }{\log 2 }
\int_{ |\gamma| \leq \sqrt{\log P }  } I(\c B;\gamma)
 \mathrm d \gamma=
 \frac{d \sigma_\infty(f) }{ \log 2 }
 \log P+o(\log P)
,\end{align*} which concludes the proof of~\eqref{darksun}.

 We are   now in position to apply Theorem~\ref{pergolesisalvereginainaminor}.
Before doing so, we  simplify $\rho(r,q)$ by noting that 
$$ \frac{1}{\phi\left(q 2^{-v_2(q) }\right) } \sum_{1\leq j \leq \phi\left(q 2^{-v_2(q) }\right) } \mathds 1 \left (r 2^{-v_2(q)} \equiv 2^j \md{q 2^{-v_2(q) } } \right)
=\frac{\mathds 1(r 2^{-v_2(q)} \in \langle 2 \rangle)}{g(q  )},$$
where $\phi $ is Euler's totient function. Hence, we may write 
$$\rho(r,q)=
\frac{\mathds 1(v_2(q) \leq v_2(r) ) }{\phi\left(q 2^{-v_2(q) }\right) } 
\#\left \{1\leq j \leq \phi(q 2^{-v_2(q) }) : 
r 2^{-v_2(q)} \equiv 2^j \md{q 2^{-v_2(q) } }  
\right \}.$$  Let $t_p(z)$ and $T_z$ be as in  Theorem~\ref{pergolesisalvereginainaminor}
and let  $w=T_z 2^{-t_2(z)}$.
Each $\b t\in (\Z/T_z\Z)^n$ can be uniquely written as $\b t = w \b x + 2^{t_2(z)} \b y $ where 
$\b x \in (\Z/2^{t_2(z)} \Z)^n$ and $\b y \in (\Z/w\Z)^n$. We find that  
 $$ \sum_{\b t \in (\Z/T_z\Z )^{n} } \frac{ \rho( f(\b t ) ,T_z) }{ T_z^{n-1} }
=
\frac{\#\left\{\b x \in (\Z/2^{t_2(z)}\Z )^{n} : f(\b x ) = 0\right\}}{2^{(n-1)t_2(z)}}
\frac{\#\left\{\b y \in (\Z/w\Z )^{n}, h   \in \Z/\phi(w)\Z  :  f(\b y )   =  2^ h \right\}}{\phi(w) w^{n-1}}
.$$The first fraction in the right hand side converges to $\sigma_2(f)$ as $z\to \infty$. By the Chinese remainder theorem the second fraction is  
$$\prod_{2<p\leq z } \frac{\#\left\{\b y \in (\Z/p^{t_p(z)}\Z )^{n}, h   \in \Z/\phi(p^{t_p(z)})\Z  :  f(\b y )   =  2^ h \right\}}
{(p-1) p^{n t_p(z) }}
$$  and letting $z\to \infty$ completes the proof.

\end{document}